\documentclass{article}
\usepackage{graphicx} 
\usepackage{amsmath} 
\usepackage{amsfonts}
\usepackage{amsthm}
\theoremstyle{definition}

\usepackage{amssymb}
\usepackage{wrapfig}
\usepackage{pifont}
\usepackage{lmodern}  
\usepackage[none]{hyphenat} 
\usepackage{graphicx} 
\graphicspath{ {./images/} } 
\usepackage{caption}
\usepackage{subcaption}
\usepackage{booktabs}
\usepackage{parskip} 
\usepackage[export]{adjustbox} 
\usepackage{float} 
\usepackage{comment}
\usepackage{mathptmx}
\usepackage{bbm}
\usepackage[pdfencoding=auto, psdextra]{hyperref}
\usepackage[british]{babel}

\usepackage{fullpage}
\usepackage{authblk}

\usepackage{titlesec}

\usepackage{appendix}
\usepackage[svgnames]{xcolor}
\colorlet{LightGray}{gray!20}
\usepackage{minted}
\numberwithin{equation}{section}
\usepackage{graphicx} 
\usepackage{lmodern}
\usepackage{tikz}
\usetikzlibrary{arrows}
\usepackage{amsmath}
\usepackage{extarrows}
\usepackage{subcaption}

\newtheorem{prop}{Proposition} 
\newtheorem{lem}[prop]{Lemma}
\newtheorem{thm}[prop]{Theorem}
\newtheorem{cor}[prop]{Corollary}

\newtheorem{remark}[prop]{Remark}

\newcommand{\dd}{\mathrm{d}}

\begin{document}
\title{Rate-induced tipping in a coral reef ecosystem: A slow increase in fishing effort can induce reef collapse}
\author[$1$]{Irakli Antidze}
\author[$1,2$]{Brian Hennessy} 
\author[$1$]{Nikola Popovi\'c}
\author[$1$]{Zak Sattar}
\affil[$1$]{School of Mathematics and Maxwell Institute for Mathematical Sciences, University of Edinburgh, James Clerk Maxwell Building, King’s Buildings, Peter Guthrie Tait Road, Edinburgh EH9 3FD, United Kingdom}
\affil[$2$]{School of Mathematical \& Computer Sciences and Maxwell Institute for Mathematical Sciences, Heriot-Watt University, Edinburgh EH14 4AP, United Kingdom}

\maketitle

\begin{abstract}
    \noindent
Critical transitions describe sudden changes in the state of an ecosystem. In classical bifurcation theory, such transitions occur when the value of a parameter exceeds a threshold (``bifurcation") value. More recently, critical transitions which are triggered by the rate of change of a parameter were described by Wieczorek et al. [Wieczorek, S., Ashwin, P., Luke, C.M., Cox, P.M., \textit{Proceedings of the Royal Society A: Mathematical, Physical and Engineering Sciences} 467(2129), 1243--1269, 2011]. In mathematical ecology, these rate-induced transitions correspond to environmental conditions that deteriorate too rapidly for the ecosystem to adapt, resulting in population collapse (``R-tipping").

In this article, we consider the potential for rate-induced tipping due to increased anthropogenic stress in a recently proposed behavioural-demographic model for herbivorous fish, algae, and coral in a coral reef ecosystem [Gil, M.A., Baskett, M.L., Munch, S.B., Hein, A.M., \textit{Proceedings of the National Academy of Sciences} 117(41), 25580--25589, 2020]. We first show that the underlying demographic model can be reframed naturally as a singularly perturbed system with two fast variables and one slow variable in which bistability can occur in ecologically relevant parameter regimes. Then, we explore the potential for canard-type dynamics in the model, complementing numerical results with an analytical description through the lens of geometric singular perturbation theory, and we describe R-tipping as a result of an increase in the fishing effort. In particular, we show that trajectories will undergo canard-induced tipping by passage through a folded node singularity, whereas a folded focus may give rise to tipping of jump type; in both scenarios, a catastrophic collapse occurs in the populations of herbivorous fish and coral, with the population of algae experiencing a ``bloom". Alternatively, we may observe ``tracking" of a sustainable coexistence state between the three populations in the presence of a folded focus.
\end{abstract}


\section{Introduction}
\label{sec: intro}
Coral reef ecosystems are tightly coupled networks in which fish regulate competition between coral and algae. Herbivorous fish can suppress algae, helping maintain space for coral growth, while reductions in herbivory through overfishing allow algae to proliferate and inhibit coral recovery after disturbances, such as bleaching events. These coral–algae–fish feedbacks can produce tipping-point dynamics, where changes in grazing pressure increase the likelihood of transitions from coral-dominated to algae-dominated reefs, with substantial consequences for biodiversity and the function of the ecosystem \cite{hughes_1994_catastrophes,mumby_2007_thresholds}.
Mathematical modelling of coral reef ecosystems can help identify alternative stable states and thresholds that prevent coral populations from going extinct, which remains a major goal in coral reef restoration and conservation \cite{suding_2009_threshold}, especially in the face of increasing pressures due to the climate crisis.

The demographic model introduced by Gil et al. is a three-dimensional system of ordinary differential equations (ODEs) which models the population of herbivorous fish, algae, and coral in a coral reef environment. The model is based on similar models studied by Fung, Blackwood, and others \cite{blackwood_2010_the, briggs_2018_macroalgae, fung_2011_alternative}, which are in turn derived from previous work by Mumby \cite{mumby_2007_thresholds}. Gil et al. build on these models by also accounting separately for fish herbivory, taking advantage of field measurements of reef fish foraging behaviour, and incorporating these into their demographic model \cite{gil_2017_social}.

A particularly interesting aspect of the study by Gil et al. \cite{gil_2017_social} is their identification of rate-induced tipping (``R-tipping") which occurs in their demographic model. Rate-induced tipping refers to a qualitative change in the environment as a result of the rate of change of a parameter exceeding some threshold, called the critical rate. By contrast, static bifurcations considered in classical bifurcation theory result in qualitative environmental change as a consequence of a parameter exceeding some threshold value. The concept of R-tipping first emerged in relation to tipping points in climate science, but has seen an increased interest in the study of dynamical systems following its identification by Wieczorek et al. \cite{wieczorek_2010_excitability}. Subsequently, Vanselow et al. \cite{vanselow_2019_when} showed that the classical Rosenzweig-MacArthur predator-prey model can undergo a rate-induced critical transition due to a slow change in prey carrying capacity. Specifically, they considered the following singularly perturbed model, where the carrying capacity of prey is a dynamic variable that represents a changing climate:
\begin{equation}
\begin{aligned}
    \varepsilon\frac{\dd u}{\dd t}&=u(1-\phi(t) u)-\frac{uv}{1+\eta u},\\
    \frac{\dd v}{\dd t}&=\frac{uv}{1+\eta u}-v,\\
    \frac{\dd\phi}{\dd t}&=r.
    \label{Vanselow}
\end{aligned}
\end{equation}
Here, $u$ and $v$ are the prey and predator densities, respectively, with $0<\varepsilon\ll1$ a small parameter that is defined as the ratio of the death rate of predator and the growth rate of prey. The analysis of Equation~\eqref{Vanselow} in \cite{vanselow_2019_when} is based on geometric singular perturbation theory (GSPT) \cite{MTSD}, where it is shown that 
for $r=0$, that is, for a fixed prey carrying capacity, the unique coexistence equilibrium lies on a normally attracting, parabolic-type critical manifold, and that it is attracting under the reduced flow on that manifold. For $r>0$, \eqref{Vanselow} is truly three-dimensional; it is then possible to show that the critical manifold is a concave parabolic cylinder, with attracting and repelling regions separated by a non-hyperbolic (``fold") curve corresponding to the maximum thereon. Crucially, 
it is proven in \cite{vanselow_2019_when} that there exists a unique ``canard point" \cite{SZMOLYAN2001419} on that fold curve which gives rise to trajectories that are initiated on the attracting portion of the critical manifold, pass through the canard point, and follow the repelling portion over an $\mathcal{O}(1)$ time interval in the slow time $\tau=\varepsilon t$ before being repelled. These so-called canard trajectories are observed for a large class of initial conditions in a neighbourhood of the coexistence equilibrium found for $r=0$ in \eqref{Vanselow} and capture the collapse of the prey and predator populations due to a slowly changing climate. 
The concept of R-tipping has since found widespread application in ecological modelling. Thus, in \cite{Plankton-tipping}, a subset of the same set of authors show that a rise in water temperature can cause plankton blooms. The model considered is the so-called ``Truscott-Brindley" model \cite{TRUSCOTT1994981}, in which the phytoplankton population undergoes logistic growth, while the zooplankton population grows with a Holling-type $\mathrm{III}$ functional response and dies with linear mortality. Furthermore, the water temperature is the dynamic variable; again, a canard mechanism is shown to cause rate-induced tipping and, ultimately, the creation of a plankton bloom.

Our primary aim in this article is to apply tools from dynamical systems theory and, in particular, GSPT, to analyse a rescaled version of the demographic model introduced by Gil et al. \cite{gil2020}. Using a variation of the methodology in Vanselow et al. \cite{Plankton-tipping,vanselow_2019_when}, we show that rate-induced tipping can arise when the rate of change of the mortality of fish due to fishing is above a certain threshold.
Ultimately, a collapse of herbivorous fish and coral populations occurs, which results in an algal bloom. In that sense, a slow increase in the fishing effort hence makes it possible for a coral ecosystem to be initiated close to a coexistence state between fish, algae, and coral, but ultimately to evolve towards an equilibrium where only the population of algae is non-zero. We will show that this evolution can occur in biologically realistic parameter regimes and that it depends on the rate at which the fishing effort increases.

To begin, we state the demographic model introduced by Gil et al. \cite{gil2020},
\begin{equation}
    \begin{aligned}
    \frac{\dd H}{\dd T}&={\color{black}\lambda_0} s(H)AH - \mu H - fH, \\
    \frac{\dd A}{\dd T} &=r_{A}A(1-A-C)-\lambda_0 s(H)AH, \\
    \frac{\dd C}{\dd T} &=r_CC(1-A-C)-mC,
    \label{PNAS Model}
    \end{aligned}
\end{equation}
where $s(H)=d+\frac{H}{1+H}$. Here, $H$, $A$, and $C$ are dimensionless variables representing the populations of herbivorous fish, algae, and coral, respectively, where $H,\ A,\ C\geq 0$ and $H\geq 0$. 

Details on the derivation of \eqref{PNAS Model}, as well as an interpretation of the various, mostly standard, terms therein can be found in \cite{gil2020}. Here, we merely highlight the function $s(H)$, which describes the steady‑state per‑capita feeding rate of herbivorous fish. Specifically, $s(H)$ models the behavioural feedback in which fish, when at higher densities, tend to follow each other and be more willing to feed in unsheltered and hence risky open areas, with the result that each individual feeds more on average. The function $s(H)$ was obtained by calibrating a behaviour-based model to observational data collected with camera arrays \cite{gil_2017_social}.
Definitions of the model parameters in \eqref{PNAS Model}, as well as their values, are summarised in Table~\ref{param_table}. 

\begin{table}[H]
    \centering
    \begin{tabular}{|c|c|c|} 
    \hline
    \textbf{Parameter} & \textbf{Interpretation} & \textbf{Range} \\
    \hline
    
    \(\mu\) & Natural mortality rate of fish & $0.02$ yr$^{-1}$ \\
    \hline
    $m$ & Natural mortality rate of coral & $[0.008, 0.08]$ yr$^{-1}$ \\
    \hline
    \(f\) & Mortality rate due to fishing & $[0, 0.5]$ yr$^{-1}$ \\
    \hline
    \(r_A\) & Growth rate of algae & $[1, 8]$ yr$^{-1}$ \\
    \hline
    $r_C$ & Growth rate of coral & $[0.02, 0.2]$ yr$^{-1}$ \\
    \hline
    $d$ & Minimum per-capita herbivory feeding rate & $0.22$ \\
    \hline
    $\lambda_0$ & Average per-capita herbivory rate of algae by fish & $[0, 3.2]$ yr$^{-1}$ \\
    \hline
    \end{tabular}
    \caption{Parameters and their ranges for Equation~\eqref{PNAS Model} from Gil et al. \cite[Appendix]{gil2020}.}
    \label{param_table}
\end{table}

This article is organised as follows: in Section~\ref{section GPST}, we transform Equation~\eqref{PNAS Model} into a $(2,1)$ fast-slow system. We then apply standard GSPT \cite{MTSD} to show that, in certain parameter regimes, the rescaled system exhibits bistability, in the sense that there exist two local attractors: a coexistence state or coral-free equilibrium, as well as an algae-only equilibrium. We characterise these parameter regimes in terms of the fishing effort.  
Then, in Section~\ref{section R-tipping}, we consider the fishing effort to be a time-dependent dynamic variable in the style of \cite{Plankton-tipping, vanselow_2019_when}, and we show that rate-induced tipping is mediated by a canard phenomenon; in particular, we show that trajectories will typically tip due to the presence of a folded node-type singularity \cite{Canards}. 

\section{Geometric singular perturbation analysis}
\label{section GPST}
To apply GSPT to \eqref{PNAS Model}, we first perform a rescaling which introduces an explicit separation of time scales. 
From Table~\ref{param_table}, we see that the value of $r_A$ within is the largest model parameter, which implies that algae are the fastest evolving species in \eqref{PNAS Model}. 
Therefore, we introduce a time rescaling of the form $T=\frac{t}{r_{A}}$, with $t$ being a dimensionless time variable. Equation~\eqref{PNAS Model} then becomes
\begin{equation}
\begin{split}
    \frac{\dd H}{\dd t} &=  \lambda H(s(H) A - \alpha), \\
    \frac{\dd A}{\dd t} &= A(1-A-C-\lambda H s(H)), \\
    \frac{\dd C}{\dd t} &= \varepsilon C(1-\beta-A-C),\\
\end{split}
\label{fast-system}
\end{equation}
where the corresponding dimensionless parameters are given in Table~\ref{param_table_2}. (We note that the parameter $d$ is dimensionless {\it a priori}.) Of particular relevance to us is the parameter $\alpha$, which is proportional, with proportionality factor $\lambda$, to the (non-dimensionalised) total mortality rate of herbivorous fish. As we assume the natural mortality rate of fish to be fixed, while mortality due to fishing varies, we will take $\alpha$ to reflect the fishing effort in the following.

We have the following simple result regarding \eqref{fast-system}.
\begin{lem}
    The set $\mathcal{D}=\{(H,A,C)\,|\,(H,A,C)\in [0, \infty)\times [0,1]\times[0,1]\}$ is invariant under the flow of Equation~\eqref{fast-system}. 
    \label{lem: octant invariance}
\end{lem}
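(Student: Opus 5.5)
The plan is to treat each boundary face of $\mathcal{D}$ separately, and to replace pointwise checks of the vector field on the faces by integral representations and scalar comparison arguments. This avoids trouble at edges and corners, where the vector field may be tangent to the boundary. Throughout I will use that all parameters in Table~\ref{param_table_2} are non-negative, with $\lambda>0$, $\varepsilon>0$ and $\beta\geq 0$. I will also use that $s(H)=d+\frac{H}{1+H}$ satisfies $d\le s(H)\le d+1$ for $H\ge 0$.

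\textbf{Step 1 (non-negativity).} Each equation in \eqref{fast-system} has the form $\dot X = X\,g_X(H,A,C)$ with $g_X$ smooth on a neighbourhood of $\mathcal{D}$. Along any solution on its maximal interval of existence we can therefore write
\[
X(t)=X(0)\exp\Bigl(\int_0^t g_X\bigl(H(\sigma),A(\sigma),C(\sigma)\bigr)\,\dd\sigma\Bigr).
\]
Hence $H$, $A$ and $C$ keep the sign of their initial values. In particular, the coordinate planes $H=0$, $A=0$ and $C=0$ are invariant, and non-negativity is preserved. The function $s$ is only needed on $H>-1$, so we are working in a region where the vector field is smooth.

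\textbf{Step 2 (upper bounds via comparison).} Given $H,A,C\ge 0$, the $A$-equation gives
\[
\dot A = A(1-A-C-\lambda H s(H))\le A(1-A).
\]
The interval $[0,1]$ is invariant for the logistic equation $\dot u=u(1-u)$. By the scalar comparison principle, $A(0)\le 1$ therefore implies $A(t)\le 1$. Likewise, since $A\ge 0$ and $\beta\ge 0$, we have $\dot C=\varepsilon C(1-\beta-A-C)\le \varepsilon C(1-C)$, and hence $C(t)\le 1$.

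As a consistency check, on the face $A=1$ one finds $\dot A=-C-\lambda H s(H)\le 0$. On the face $C=1$ one finds $\dot C=-\varepsilon(\beta+A)\le 0$. So the field never points outward, and the comparison argument handles the degenerate equality cases rigorously.

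\textbf{Step 3 (global existence).} Finally, solutions must exist for all $t\ge 0$, so that invariance is meaningful. Since $A$ and $C$ are bounded by $1$ and $s(H)\le d+1$, we get $\dot H\le \lambda(d+1)H$. Thus $H$ grows at most exponentially and cannot blow up in finite time, and $A$ and $C$ stay bounded. The maximal solution therefore exists globally in forward time and remains in $\mathcal{D}$.

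The only delicate point is the tangency at edges and corners, for instance at $A=1$, $C=0$, $H=0$, where $\dot A=0$. The comparison argument in Step 2 is designed precisely to sidestep this. I expect no real obstacle beyond verifying the sign assumption $\beta\ge 0$ from the definition of the rescaled parameters.
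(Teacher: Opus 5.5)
Your proof is correct. The paper gives no proof of this lemma; it introduces it as a ``simple result''. There is therefore no argument of the authors' to compare with. The check they presumably have in mind is the one you list as a consistency check: the coordinate planes are invariant because each equation carries a factor of its own variable, while $\dot A=-C-\lambda H s(H)\le 0$ on $\{A=1\}$ and $\dot C=-\varepsilon(\beta+A)\le 0$ on $\{C=1\}$.

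Your exponential representation and comparison with the logistic equation turn that sign check into a complete argument. It needs no Nagumo-type sub-tangency theorem at boundary points where the field is only tangent to $\partial\mathcal{D}$, such as the invariant coordinate planes and the equilibrium $e_A=(0,1,0)$. Your Step~3 also supplies forward global existence, which a boundary check alone does not give, because $\mathcal{D}$ is unbounded in $H$.

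Two small remarks. First, the lemma can only mean forward invariance: on $\{A=1,\ C>0\}$ the field points strictly into $\mathcal{D}$, so backward orbits leave it. Your forward-in-time argument therefore proves exactly the right statement. Second, the sign facts you rely on are immediate from the definitions: $\beta=m/r_C>0$, $\alpha=(\mu+f)/\lambda_0>0$ (used in Step~3), and $d=0.22>0$, which gives $s(H)\ge d>0$ and hence $\lambda H s(H)\ge 0$ in Step~2.
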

We restrict the dynamics of \eqref{fast-system} to $\mathcal{D}$, so that the algae and coral populations are below their carrying capacity. (We note that the population of fish $H$ does not grow logistically, in contrast to those of algae and coral, and that it is therefore not limited by a carrying capacity.)  
\begin{table}[ht]
    \centering
    \begin{tabular}{|c|c|c|c|} 
    \hline
    \textbf{Dimensionless parameter} & \textbf{Definition} & \textbf{Possible range} &\textbf{Considered range}\\
    \hline
    $\lambda$ & $\frac{\lambda_0}{r_A}$ & $[0, 3.2]$ & $[0,1]$\\
    \hline
    $\alpha$ & $\frac{\mu+f}{ \lambda_0}$ & $[0.00625, \infty)$  & $[0.1, 0.7]$\\
    \hline
    $\beta$ & $\frac{m}{r_C}$ & $[0.04, 4]$ & $[0.04, 1)$\\
    \hline
    $\varepsilon$ & $\frac{r_C}{r_A}$ & $[0.0025, 0.2]$ & $[0.0025, 0.01]$\\
    
    \hline
    \end{tabular}
    \caption{Dimensionless parameters and their ranges for Equation~\eqref{fast-system}.}
    \label{param_table_2}
\end{table}
As the growth rate of coral is significantly lower than that of algae, with $r_C\ll r_A$ implying $\varepsilon=\frac{r_C}{r_A}\ll 1$, \eqref{fast-system} is a $(2,1)$ fast-slow system in the standard form of GSPT \cite{MTSD}. The remaining dimensionless parameters $\lambda$, $\alpha$, $\beta$, and $d$ are considered to be $\mathcal{O}(1)$. 
We refer to \eqref{fast-system} as the fast system, written in terms of the fast time $t$.

\subsection{Singular geometry}

The slow dynamics are approximated by the ``critical manifold" \cite{MTSD} $S_0$, which is defined as the set of equilibria of the fast system in \eqref{fast-system}, in the limit as $\varepsilon\to0$.
We refer to the resulting system as the layer problem,
\begin{equation}
\begin{split}
    \frac{\dd H}{\dd t} &= \lambda H(s(H) A - \alpha), \\
    \frac{\dd A}{\dd t} &= A(1-A-C-\lambda H s(H)), \\
    \frac{\dd C}{\dd t} &=0,\\
\end{split}
\label{layer-problem}
\end{equation}
the equilibria of which are located on 
\begin{equation}
    S_0 = \{(H,A,C)\in \mathcal{D}\,|\,\lambda H (s(H) A-\alpha)=0,\ A(1-A-C-\lambda H s(H))=0\}.
\end{equation}
The manifold $S_0$ can be expressed as a union $S_0 = S_0^0\cup S_0^1\cup S_0^2$ of sub-manifolds, where
\begin{align}
    &S_0^0 = \big\{(H,A,C)\in S_0\,\big|\, H=0,\ A=0\big\}, \\ 
    &S_0^1 =\big\{(H,A,C)\in S_0\,\big|\, H=0,\ A=1-C\big\},\quad\text{and} \\
    &S_0^2 = \bigg\{(H,A,C)\in S_0\,\bigg|\, A = \frac{\alpha}{s(H)},\ C=1-\frac{\alpha}{s(H)}-\lambda Hs(H)\bigg\}.
    \label{eq: S_0}
\end{align}

We have the following result regarding the stability of $S_0$.
\begin{lem}
    Assume that $\alpha>\lambda d^3$, and define the function
    \begin{equation}
        Q(H;\alpha)=\lambda \big[H+s(H)(1+H)^2\big]-\frac{\alpha}{s(H)^2}.
    \end{equation}
    Then, the following statements hold.
    \begin{enumerate}
        \item The function $Q(H;\alpha)$ is monotonically increasing in $H$ and admits a unique root $H_0$.
        \item The sub-manifold $S_0^0$ is normally repelling for $0\leq C<1$.
        The point $C=1$ corresponds to the point of intersection $n_1=S_0^0\cap S_0^1$ of $S_0^0$ and $S_0^1$, which is non-hyperbolic, and is known as a transcritical singularity.
        \item For $0<A<\frac{\alpha}{d}$ \big($A>\frac{\alpha}{d}$\big), the sub-manifold $S_0^1$ is normally attracting (normally repelling), losing hyperbolicity at $A=\frac{\alpha}{d}$. Furthermore, $A=\frac{\alpha}{d}$ corresponds to the point of intersection $n_2=S_0^1\cap S_0^2$ of $S_0^1$ and $S_0^2$, which is again a transcritical singularity. 
        \item For $\lambda< 4$ and $0<H<H_0$ ($H>H_0$), the sub-manifold $S_0^2$ is normally repelling (normally attracting), with a regular fold point at $H=H_0$, denoted by $n_0=(H_0, A_0, C_0)\in S_0^2$. The curve $C(H)=1-\frac{\alpha}{s(H)}-\lambda H s(H)$ assumes its maximum at $H=H_0$ for $H\in[0, \infty)$.
    \end{enumerate}
\label{lemma: Stability of Critical Manifold}
\end{lem}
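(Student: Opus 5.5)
The proof reduces to linearising the layer problem \eqref{layer-problem} in the fast variables $(H,A)$ along each branch of $S_0$, with $C$ as a parameter. The Jacobian is
\[
J=\begin{pmatrix}\lambda\big(s(H)A-\alpha\big)+\lambda H s'(H)A & \lambda H s(H)\\ -\lambda A\big(s(H)+Hs'(H)\big) & 1-2A-C-\lambda H s(H)\end{pmatrix},\qquad s'(H)=\frac{1}{(1+H)^2}.
\]
Normal hyperbolicity and the type of each branch will be read off from the eigenvalues of $J$ restricted to that branch. I also use that $s$ is positive and strictly increasing.

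\textbf{Item 1.} The function $\lambda[H+s(H)(1+H)^2]$ is strictly increasing, being a sum of products of positive increasing functions. The function $-\alpha/s(H)^2$ is also increasing, since $s$ increases. So $Q(\cdot;\alpha)$ is strictly increasing. Moreover $Q(0;\alpha)=\lambda d-\alpha/d^2<0$ exactly because $\alpha>\lambda d^3$, and $Q\to\infty$ as $H\to\infty$. Hence $Q$ has a unique root $H_0>0$.

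\textbf{Items 2 and 3.} On $S_0^0$ ($H=A=0$), $J$ is diagonal with entries $-\lambda\alpha$ and $1-C$. For $0\le C<1$ the eigenvalue $1-C$ is positive and nonzero, so the branch is hyperbolic and repelling in the $A$-direction. This is the sense of ``normally repelling'' intended in the statement, since the other eigenvalue is negative. At $C=1$ this eigenvalue vanishes where $S_0^0$ meets $S_0^1$, giving the transcritical point $n_1$.

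On $S_0^1$ ($H=0$, $A=1-C$), $J$ is lower triangular with eigenvalues $\lambda(dA-\alpha)$ and $-A$. The branch is therefore attracting for $0<A<\alpha/d$ and repelling (saddle-type) for $A>\alpha/d$. At $A=\alpha/d$ the first eigenvalue vanishes. Setting $H=0$ in the definition of $S_0^2$ gives $A=\alpha/d$ and $C=1-\alpha/d$, so this point is exactly $S_0^1\cap S_0^2$, i.e.\ the transcritical point $n_2$.

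\textbf{Item 4.} On $S_0^2$ we have $s(H)A=\alpha$ and $1-A-C-\lambda Hs=0$, so $J$ simplifies to
\[
J=\begin{pmatrix}\lambda H s'\alpha/s & \lambda H s\\ -\lambda(\alpha/s)(s+Hs') & -\alpha/s\end{pmatrix}.
\]
A short computation gives
\[
\det J=\frac{\lambda H\alpha}{s}\Big[\lambda s(s+Hs')-\frac{\alpha s'}{s}\Big]=\lambda H\alpha\, s'(H)\,Q(H;\alpha),
\]
using $s+Hs'=s'\,[s(1+H)^2+H]$. Therefore:
\begin{itemize}
\item for $0<H<H_0$, $\det J<0$, so the branch is of saddle type, which is what ``repelling'' means here;
\item for $H>H_0$, $\det J>0$, and $\operatorname{tr}J=\frac{\alpha}{s}\big(\lambda H/(1+H)^2-1\big)$. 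Since $H/(1+H)^2\le 1/4$, the condition $\lambda<4$ gives $\operatorname{tr}J<0$, so the branch is attracting.
\end{itemize}

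For the fold, I differentiate $C(H)=1-\alpha/s-\lambda Hs$ and obtain the identity $C'(H)=-s'(H)\,Q(H;\alpha)$. By item 1, $C$ increases on $[0,H_0)$ and decreases on $(H_0,\infty)$, so its global maximum is at $H_0$. Also $C''(H_0)=-s'(H_0)Q'(H_0)<0$, which is the quadratic (nondegenerate) fold condition.

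The main obstacle is checking that $n_0$ is a \emph{regular} fold in the sense of GSPT. Besides the simple zero eigenvalue and the quadratic tangency above, one needs the transversality condition that the fast vector field depends nontrivially on the slow variable $C$ in the direction of the left null vector. Here $\partial_C$ of the fast field is $(0,-A)^{T}$ with $A_0=\alpha/s(H_0)>0$. Since $J_{12}=\lambda H_0 s(H_0)\neq 0$ in the first row of $J$, the left null vector has nonzero second component, so this pairing is nonzero. I would verify this product explicitly.
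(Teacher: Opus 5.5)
Your proposal is correct and follows essentially the same route as the paper's appendix proof: you linearise the layer problem in $(H,A)$ and read off the diagonal or triangular eigenvalues on $S_0^0$ and $S_0^1$. On $S_0^2$ you use $\operatorname{tr}J<0$ for $\lambda<4$, $\det J=\lambda\alpha H s'(H)Q(H;\alpha)$ and $C'(H)=-s'(H)Q(H;\alpha)$; the only differences are that you get the global maximum from the sign change of $Q$ rather than from the paper's global concavity $C''<0$, and that you additionally check the fold nondegeneracy and transversality at $n_0$, which the paper omits and which your left-null-vector argument settles.
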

\begin{proof}
    See Appendix~\ref{sec: stability of S_0}.
\end{proof}

For future reference, we denote the attracting (repelling) portion of $S_0^2$ by  $S_{0}^{2,a}$ \big($S_{0}^{2,r}$\big).

\begin{figure}[H]
    \centering
    \begin{subfigure}[t]{0.45\textwidth}
        \centering
        \includegraphics[scale = 0.52]{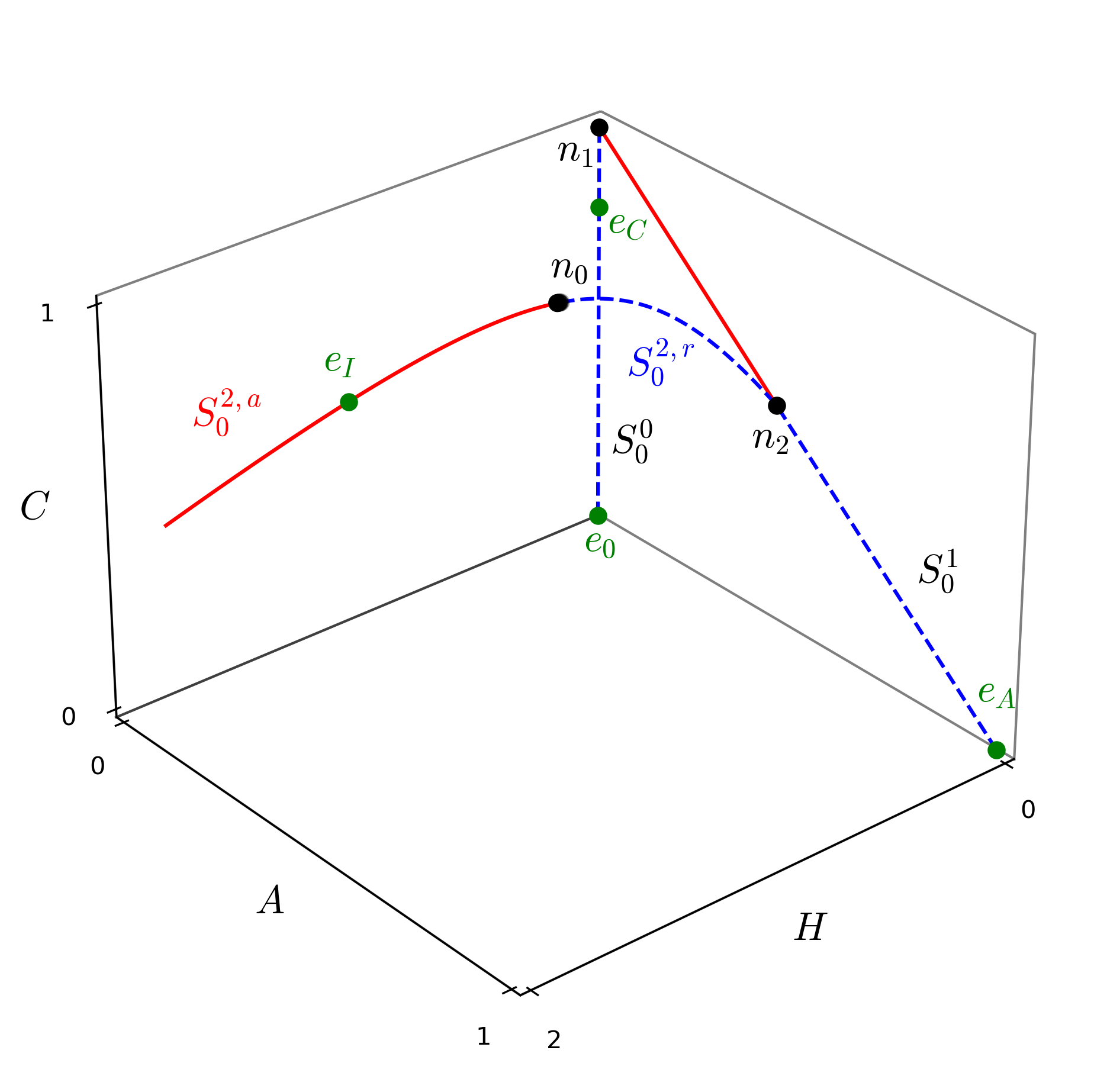}
        \subcaption{$\alpha<d$} 
    \end{subfigure}
    \hfill
    \begin{subfigure}[t]{0.45\textwidth}
        \centering
        \includegraphics[scale = 0.52]{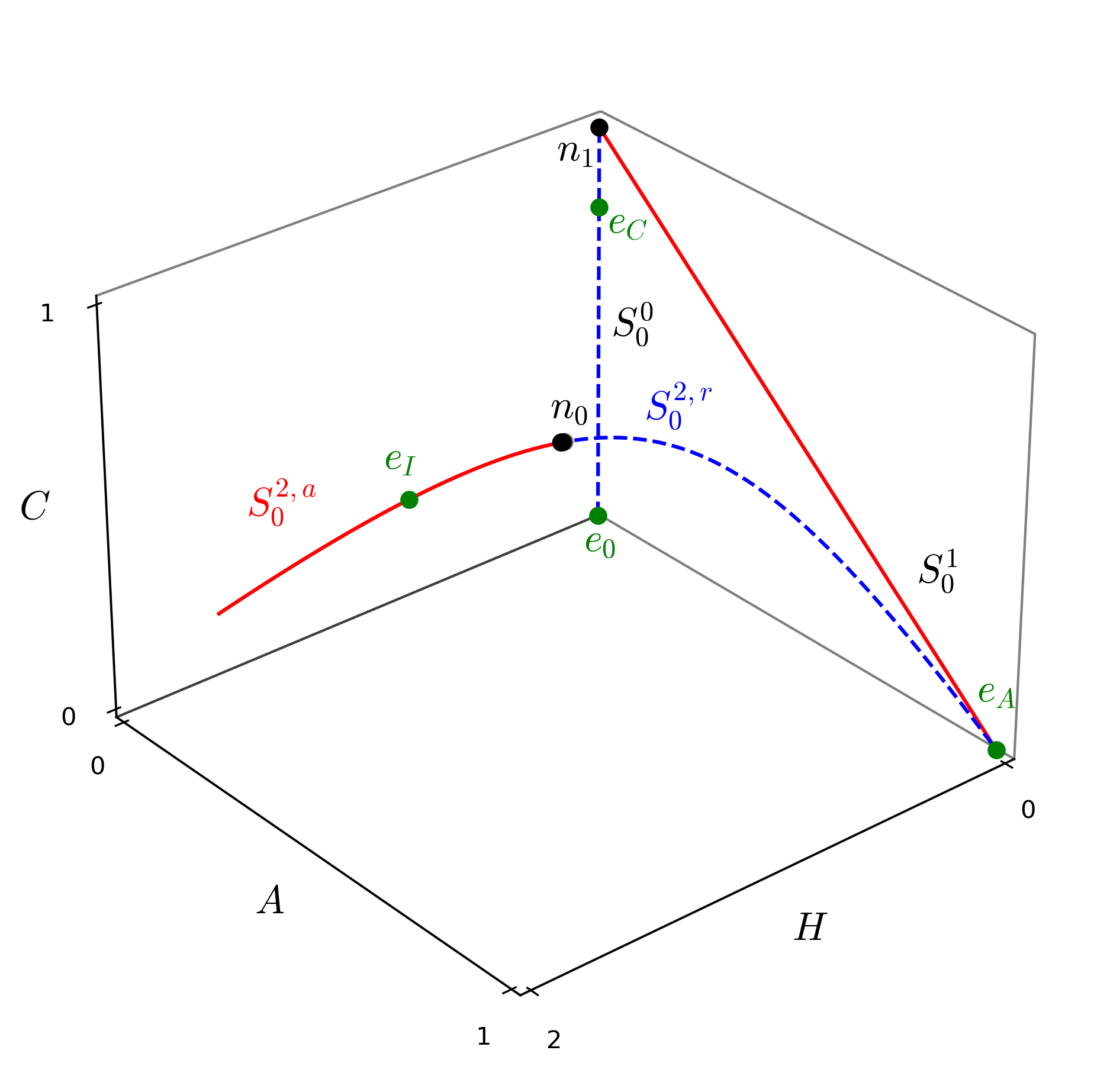}
        \subcaption{$\alpha=d$} 
    \end{subfigure}
    \hfill
    \begin{subfigure}[t]{0.45\textwidth}
        \centering
        \includegraphics[scale = 0.52]{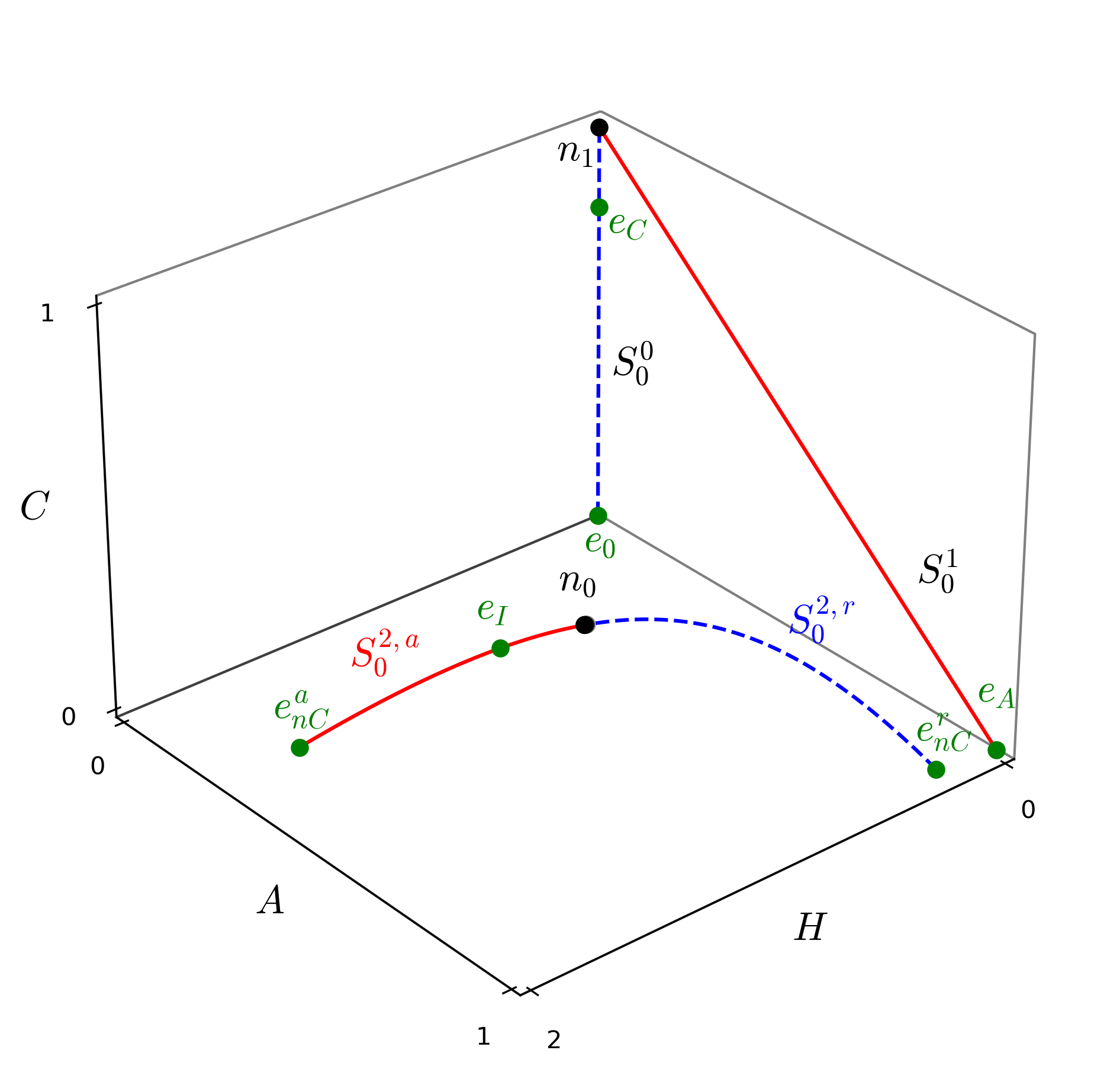}
        \subcaption{$\alpha>d$} 
    \end{subfigure}
    \caption{Geometry of the critical manifold $S_0$, with attracting and repelling portions indicated in solid red and dashed blue, respectively. Equilibria are indicated by green dots, while black dots correspond to non-hyperbolic singularities. The corresponding parameter values are $\beta = 0.2 = \lambda$, with $d=0.22$ and (a) $\alpha=0.1$; (b) $\alpha=0.22(=d)$; and (c) $\alpha=0.4$.}
    \label{fig: S_0-stability}
\end{figure}

\begin{remark}
The requirement that $\alpha>\lambda d^3$, which guarantees the uniqueness of the root $H_0$ of $Q(H;\alpha)$, is only partially restrictive from an ecological perspective. The value $d=0.22$, as given in \cite{gil2020}, was obtained from field measurements of reef fish foraging behaviour. Furthermore, from Table~\ref{param_table_2}, we find that $\lambda\leq 3.2$ and, therefore, $\lambda d^3\leq 3.2(0.22)^3\approx0.016$. Similarly, from Table~\ref{param_table_2}, we see that we are only considering $\alpha\in [0.1,0.7]$, which is a subset of the possible values $\alpha\in [0.00625, \infty)$ can take.
\end{remark}
The following result characterises the geometry of $S_0$ in terms of the fishing effort $\alpha$.
\begin{lem}
    Let $\lambda d^2<1$. Then, there exists a unique $\alpha$-value $\hat{\alpha}>d$ such that the following holds in the non-negative octant:
    \begin{enumerate}
        \item For $\alpha< d<\hat{\alpha}$ or $d<\alpha=\hat{\alpha}$, $S_0^2$ intersects the plane $\{C=0\}$ in a unique point.
        \item For $d\leq \alpha<\hat{\alpha}$, $S_0^2$ intersects the plane $\{C=0\}$ in two distinct points.
        \item For $\hat{\alpha}<\alpha$, $S_0^2$ never intersects the plane $\{C=0\}$.
    \end{enumerate}
    Furthermore, $\hat{\alpha}=\hat\alpha(\lambda, d)$ is a function of $\lambda$ and $d$ only.
    \label{lem: S_0^2 intersections}
\end{lem}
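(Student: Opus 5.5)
The plan is to reduce the intersection problem to a scalar equation in $H$ and to show that the resulting function of $H$ is unimodal. By \eqref{eq: S_0}, a point of $S_0^2$ lies in $\{C=0\}$ exactly when $1-\frac{\alpha}{s(H)}-\lambda H s(H)=0$. Since $s(H)\geq d>0$, this is equivalent to
\begin{equation*}
\alpha = g(H):=s(H)\big(1-\lambda H s(H)\big)=s(H)-\lambda H s(H)^2,\qquad H\geq 0.
\end{equation*}
So the number of intersection points equals the number of solutions $H\geq0$ of $g(H)=\alpha$. We also need each such point to lie in $\mathcal{D}$. At $C=0$ we have $A=\alpha/s(H)=1-\lambda H s(H)$, so $0<A\leq 1$ holds automatically because $\alpha>0$. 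Hence no further constraint arises.

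The main step is to show that $g$ is strictly increasing and then strictly decreasing on $[0,\infty)$. Using $s'(H)=(1+H)^{-2}$, I would compute
\begin{equation*}
g'(H)=s'-\lambda s^2-2\lambda H s s' = s'(H)\Big[1-2\lambda H s(H)-\lambda s(H)^2(1+H)^2\Big].
\end{equation*}
Call the bracket $B(H)$. Each of $Hs(H)$ and $s(H)(1+H)$ is positive and increasing in $H$, so $B$ is strictly decreasing. Moreover $B(0)=1-\lambda d^2>0$ by the hypothesis $\lambda d^2<1$, and $B(H)\to-\infty$ as $H\to\infty$. Hence $B$ has a unique zero $H^*>0$. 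It follows that $g$ increases strictly on $[0,H^*]$ and decreases strictly on $[H^*,\infty)$. In addition $g(H)\to-\infty$ as $H\to\infty$, since $s\to d+1$ and $\lambda H s^2\to\infty$.

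Next I would set $\hat\alpha:=g(H^*)$. Then $\hat\alpha>g(0)=d$. Both $H^*$ (the root of $B$) and $g$ involve only $\lambda$ and $d$, so $\hat\alpha=\hat\alpha(\lambda,d)$ depends on $\lambda$ and $d$ only. Uniqueness of $\hat\alpha$ as the threshold follows from the case count below.

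The counting is then elementary, via the intermediate value theorem on each monotone branch.
\begin{itemize}
\item If $\alpha<d$: on $[0,H^*]$ we have $g\geq d>\alpha$, so there is no root there. On $(H^*,\infty)$, $g$ decreases from $\hat\alpha$ to $-\infty$, giving exactly one root.
\item If $\alpha=d$: $H=0$ is a root, and there is exactly one further root on the decreasing branch, so there are two points.
\item If $d<\alpha<\hat\alpha$: there is one root on each branch, so there are two points.
\item If $\alpha=\hat\alpha$: the only root is $H^*$.
\item If $\alpha>\hat\alpha$: there is no root.
\end{itemize}
These cases give items 1--3 of the statement. The only genuinely delicate point is the sign analysis of $g'$, that is, establishing the monotonicity of $B$ and the condition $B(0)>0$. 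This is precisely where the hypothesis $\lambda d^2<1$ enters, and it is what forces $\hat\alpha>d$.
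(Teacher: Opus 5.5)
Your proof is correct and follows the paper's argument. Like the paper, you reduce to the scalar equation $\Pi(H)=s(H)-\lambda Hs(H)^2=\alpha$, show that $\Pi$ has a single interior maximum $\hat\alpha=\Pi(\hat H)>\Pi(0)=d$, and count roots on the two monotone branches. The paper gets unimodality by asserting concavity of $\Pi$ via a ``tedious calculation'', whereas you read it off the factorisation $\Pi'(H)=(1+H)^{-2}\big[1-2\lambda Hs(H)-\lambda s(H)^2(1+H)^2\big]$, whose bracket is strictly decreasing (the paper uses this same expression later for $u(H)$); you also check that the intersection points lie in $\mathcal{D}$.
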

\begin{proof}
    It follows from the definition of $S_0^2$ in \eqref{eq: S_0} that the points of intersection of that sub-manifold with $\{C=0\}$ are given by the solutions of $\Pi(H)=\alpha$, where 
    \begin{equation}
        \Pi(H)=s(H)-\lambda H s(H)^2
        \label{eq: Pi(H)}
    \end{equation}
    for $H\geq 0$.
    We begin by showing that $\Pi(H)$ is a concave function on $[0, \infty)$. Note that $\Pi(0)=d>0$ and that $\Pi(H)\to -\infty$ as $H\to \infty$, since $s(H)\to d+1$ in that limit. Furthermore, $\frac{\dd}{\dd H}\Pi(H)\big\lvert_{H=0}=1-\lambda d^2>0$ by assumption. Therefore, $\Pi$ is increasing for $H>0$ sufficiently small. A straightforward but tedious calculation shows that $\frac{\dd}{\dd H}\Pi(H)$ is strictly decreasing. Hence, it follows that there exists a unique $H$-value $\hat{H}$ for which $\hat{\alpha}\equiv\Pi(\hat{H})$ is the global maximum of \eqref{eq: Pi(H)} on $(0, \infty)$. Furthermore, by the definition of $s(H)$, we have $\hat\alpha=\hat\alpha(\lambda,d)$. The result then follows by considering solutions of $\Pi(H)=\alpha$ for $H\geq 0$.
\end{proof}
Figure~\ref{fig: S_0-stability} visualises the singular geometry of \eqref{eq: S_0} for $\alpha=0.1$, $\alpha=0.22$, and $\alpha=0.4$, corresponding to the cases where $\alpha<d$, $\alpha=d$, and $\alpha>d$, respectively.

\subsection{Reduced flow}

The fast system in \eqref{fast-system} evolves in the fast time $t$ and converges to an $\mathcal{O}(\varepsilon)$-perturbation of the normally attracting portion of the critical manifold $S_0$. The flow on that so-called slow manifold is described in terms of the slow time $\tau=\varepsilon t$; that time rescaling transforms \eqref{fast-system} into the slow system:
\begin{equation}
    \begin{aligned}
        \varepsilon\frac{\dd H}{\dd \tau} &=  \lambda H(s(H) A - \alpha), \\
    \varepsilon\frac{\dd A}{\dd \tau} &= A(1-A-C-\lambda H s(H)), \\
    \frac{\dd C}{\dd \tau} &= C(1-\beta-A-C).
    \label{slow-system}
    \end{aligned}
\end{equation}

Taking $\varepsilon\to 0$ in \eqref{slow-system}, we obtain the so-called reduced problem, 
\begin{equation}
    \begin{aligned}
        0&=  \lambda H(s(H)A - \alpha), \\
    0&= A(1-A-C-\lambda H s(H)), \\
    \frac{\dd C}{\dd \tau} &= C(1-\beta-A-C),
    \label{reduced-system}
    \end{aligned}
\end{equation}
which approximates the dynamics on $S_0$ for $\varepsilon=0$. The next result describes the reduced flow on the critical manifold $S_0$.
\begin{lem}
The reduced flow on $S_0$ can be written as  
    \begin{equation}
        \begin{aligned}
            \frac{d C}{d\tau}& = C(1-\beta-C) \quad\text{on } S_0^0, \\
            \frac{d C}{d\tau}& = -\beta C \quad \text{on } S_0^1, \text{ and} \\
            \frac{d H}{d\tau}& = -\frac{(1+H)^2}{Q(H;\alpha)} \bigg(1-\frac{\alpha}{s(H)}-\lambda H s(H)\bigg)(\lambda H s(H)-\beta) \quad \text{on } S_0^2. \label{reduced flow}
        \end{aligned}
    \end{equation}
\end{lem}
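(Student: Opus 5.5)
The plan is to parametrise each of the three sub-manifolds $S_0^0$, $S_0^1$ and $S_0^2$ by a single coordinate, substitute into the slow equation of the reduced problem \eqref{reduced-system}, and, where needed, use the chain rule to express the flow in the chosen coordinate. The only nontrivial component is $S_0^2$, which is naturally parametrised by $H$ rather than $C$.

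\textbf{$S_0^0$ and $S_0^1$.} These are graphs over $C$, so $C$ is the natural coordinate.
\begin{itemize}
\item On $S_0^0$ we have $H=A=0$. Substituting into $\frac{\dd C}{\dd\tau}=C(1-\beta-A-C)$ gives $C(1-\beta-C)$ directly.
\item On $S_0^1$ we have $H=0$ and $A=1-C$. Then $1-\beta-A-C=-\beta$, so $\frac{\dd C}{\dd\tau}=-\beta C$.
\end{itemize}
Both are one-line substitutions.

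\textbf{$S_0^2$: flow in $C$.} By \eqref{eq: S_0}, $A=\alpha/s(H)$ and $C=C(H)=1-\frac{\alpha}{s(H)}-\lambda H s(H)$. Substituting,
\begin{equation*}
1-\beta-A-C=1-\beta-\frac{\alpha}{s(H)}-1+\frac{\alpha}{s(H)}+\lambda Hs(H)=\lambda Hs(H)-\beta,
\end{equation*}
so that
\begin{equation*}
\frac{\dd C}{\dd\tau}=C(H)\big(\lambda Hs(H)-\beta\big).
\end{equation*}

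\textbf{$S_0^2$: conversion to $H$.} Differentiate the constraint $C=C(H)$ along the reduced flow, $\frac{\dd C}{\dd\tau}=C'(H)\frac{\dd H}{\dd\tau}$. Using $s'(H)=\frac{1}{(1+H)^2}$,
\begin{equation*}
C'(H)=\frac{\alpha s'(H)}{s(H)^2}-\lambda\big(s(H)+Hs'(H)\big)=\frac{1}{(1+H)^2}\Big[\frac{\alpha}{s(H)^2}-\lambda\big(H+s(H)(1+H)^2\big)\Big]=-\frac{Q(H;\alpha)}{(1+H)^2}.
\end{equation*}
Solving for $\frac{\dd H}{\dd\tau}$ then yields the stated expression, with the prefactor $-\frac{(1+H)^2}{Q(H;\alpha)}$.

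\textbf{Main obstacle.} The one point needing care is that this division is legitimate only where $Q\neq0$, that is, away from the fold point $n_0$ ($H=H_0$) given by Lemma~\ref{lemma: Stability of Critical Manifold}. I would therefore state the $H$-formula on $S_0^2\setminus\{n_0\}$ and remark that at $n_0$ the reduced flow is singular. This is the usual situation at a fold, and it can be handled later by desingularising, i.e. multiplying through by $Q$ and rescaling time. Apart from this, the computation is routine: the key identity is that $C'(H)$ is exactly $-Q/(1+H)^2$, which is how $Q$ arose in the first place.
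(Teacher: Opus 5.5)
Your proposal is correct and is essentially the paper's argument. The paper substitutes directly on $S_0^0$ and $S_0^1$; on $S_0^2$ it uses $1-\beta-A-C=\lambda Hs(H)-\beta$ together with the chain rule $\frac{\dd C}{\dd\tau}=C'(H)\frac{\dd H}{\dd\tau}$, where $C'(H)=-\frac{Q(H;\alpha)}{(1+H)^2}$, exactly as you do. Your caveat that the $H$-formula is only valid away from the fold $Q=0$ is a correct refinement; the paper leaves it implicit here and handles it later by desingularisation.
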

\begin{proof}
    Standard calculation; see \cite[Chapter $3$]{MTSD}. In particular, the flow on $S_0^2$ is obtained from $\frac{\dd C}{\dd \tau}=-\frac{Q(H;\alpha)}{(1+H)^2}\frac{\dd H}{\dd \tau}$.
\end{proof}

Next, we determine the equilibria of the reduced flow in Equation~\eqref{reduced flow}.
On $S_0^0$, we have either $C=0$ or $C=1-\beta$ which, in terms of the reduced problem, Equation~\eqref{reduced-system}, corresponds to the extinction state $e_0=(0,0,0)$ and the coral-only equilibrium $e_C=(0,0,1-\beta)$, respectively. On $S_0^1$, we must have $C=0$, which yields the algae-only equilibrium $e_A=(0,1,0)$. It follows from \eqref{reduced flow} that $e_A$ is always an attracting equilibrium of the reduced problem.
Furthermore, by Lemma~\ref{lem: S_0^2 intersections}, $S_0^2$ intersects the plane $\{C=0\}$ in two distinct points for $\lambda d^2<1$ and $d<\alpha<\hat{\alpha}$. Since item 4 in Lemma~\ref{lemma: Stability of Critical Manifold} implies that $H_0$ maximises the curve $C(H)$ which defines $S_0^2$, we can conclude that the $H$-coordinates of those points satisfy $H_{nC}^r<H_0<H_{nC}^a$. We denote the corresponding equilibria by $e_{nC}^r=(H_{nC}^r, A_{nC}^r, 0)$ and $e_{nC}^a=(H_{nC}^a, A_{nC}^a, 0)$, respectively, where the superscripts indicate that $e_{nC}^r$ $(e_{nC}^a)$ lies on the repelling (attracting) portion of $S_0^2$; see Figure~\ref{fig: S_0-stability} for an illustration.

Finally, there exists a unique interior (coexistence) equilibrium $e_I=(H_I, A_I, C_I)$, with $\lambda H_Is(H_I)=\beta$, where
\begin{equation}
    H_I = \frac{\frac{\beta}{\lambda}-d+\sqrt{\big(\frac{\beta}{\lambda}+d\big)^2+\frac{4\beta}{\lambda}}}{2(d+1)},\quad
    A_I = \frac{\alpha}{s(H_I)},\quad\text{and}\quad
    C_I = 1-\beta - \frac{\alpha}{s(H_I)};
    \label{eq: coexistence point}
\end{equation}
moreover, $e_I$ lies in the non-negative octant when $\alpha\leq \alpha^+$, where
\begin{equation}
    \alpha^+=\alpha^+(\beta,\lambda,d)\equiv(1-\beta)s(H_I).
    \label{eq: alpha^+}
\end{equation}

\begin{remark}
    We note that when $\alpha=\hat{\alpha}$, $e_{nC}^r=n_0=e_{nC}^a$. Correspondingly, the equilibria $H_{nC}^r$ and $H_{nC}^a$ of the reduced flow on $S_0^2$, as defined in \eqref{reduced flow}, undergo a saddle-node bifurcation at $\alpha=\hat{\alpha}$, with $H_{nC}^r=\hat{H}=H_{nC}^a$ there. (Here, $\hat{H}$ is defined as in Lemma~\ref{lem: S_0^2 intersections}, with $\hat{\alpha}=\Pi(\hat{H})$.)
\end{remark}

We henceforth assume that $\beta<1$, which is a necessary condition for $e_I$ to be ecologically relevant, with $\alpha^+>0$.

\begin{remark}
    We note that the population $H_I$ of herbivorous fish at the coexistence equilibrium $e_I$, as given in \eqref{eq: coexistence point}, is independent of the parameter $\alpha$. From an ecological perspective, that is surprising, as the parameter $\alpha$ is proportional to mortality due to fishing. 
\end{remark}

\begin{remark}
    The requirement that $\lambda d^2<1$ as stated in Lemma~\ref{lem: S_0^2 intersections}, is again non-restrictive ecologically, since $d=0.22$ by Table~\ref{param_table_2} which, in combination with $\lambda d^2<1$, gives a  bound of $\lambda\lesssim 21$ that is consistent with $\lambda \in[0, 3.2]$ in Table~\ref{param_table_2}. Furthermore, the condition that $\alpha>\lambda d^3$ imposed in Lemma~\ref{lemma: Stability of Critical Manifold} holds simultaneously with $\lambda d^2<1$ when $\alpha>d$, which is a condition for $e_A$ to lie on the attracting portion of $S_0^1$; see Lemma~\ref{lemma: Stability of Critical Manifold}. We note that $e_A$ is an equilibrium of the full system, Equation~\eqref{fast-system}, corresponding to an algal bloom. As discussed in Section~\ref{sec: intro}, we are interested in regimes where \eqref{fast-system} is bistable; therefore, we require $A=1$ on $S_0^1$ to be a local attractor.  As a result, we will assume $\alpha>d$ unless stated otherwise. 
\end{remark}

\begin{cor}
    Suppose that $\lambda d^2<1$. Then, it follows that $\alpha^{+}\leq\hat\alpha$.
    \label{cor: hat bound}
\end{cor}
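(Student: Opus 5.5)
The idea is to write $\alpha^+$ as a value of the function $\Pi$ from the proof of Lemma~\ref{lem: S_0^2 intersections}, and then use the fact that $\hat\alpha$ is the global maximum of $\Pi$ on $[0,\infty)$.

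First I evaluate $\Pi$ at $H_I$. By \eqref{eq: coexistence point}, the coexistence equilibrium satisfies $\lambda H_I s(H_I)=\beta$. Substituting this into \eqref{eq: Pi(H)} gives
\begin{equation*}
\Pi(H_I)=s(H_I)-\lambda H_I s(H_I)^2 = s(H_I)\bigl(1-\lambda H_I s(H_I)\bigr)=(1-\beta)s(H_I)=\alpha^+ ,
\end{equation*}
where the last equality is the definition \eqref{eq: alpha^+}. Geometrically, this says the following. When $\alpha=\alpha^+$ we have $C_I=0$, so $e_I$ lies on $S_0^2\cap\{C=0\}$. By Lemma~\ref{lem: S_0^2 intersections}, such intersection points are exactly the solutions of $\Pi(H)=\alpha$.

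Second, I check that $H_I$ lies in the domain where the maximum is taken. The root in \eqref{eq: coexistence point} satisfies $H_I>0$, because $\sqrt{(\beta/\lambda+d)^2+4\beta/\lambda}>\beta/\lambda+d>d-\beta/\lambda$. So $H_I\in(0,\infty)$.

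Third, I apply the earlier lemma. Since $\lambda d^2<1$, the proof of Lemma~\ref{lem: S_0^2 intersections} shows that $\Pi$ is concave on $[0,\infty)$ and attains its global maximum $\hat\alpha=\Pi(\hat H)$ there. Hence $\alpha^+=\Pi(H_I)\le \max_{H\ge 0}\Pi(H)=\hat\alpha$, which is the claim. Equality holds exactly when $H_I=\hat H$.

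The main obstacle is making sure the global-maximum property is fully justified, since the proof of Lemma~\ref{lem: S_0^2 intersections} only sketches the concavity of $\Pi$ as a tedious calculation. Concavity is not actually needed here: $\Pi$ is continuous, $\Pi(0)=d$ and $\Pi(H)\to-\infty$ as $H\to\infty$, so a maximum over $[0,\infty)$ exists. Taking $\hat\alpha$ to denote that supremum gives $\alpha^+\le\hat\alpha$ immediately. This does rely on $\hat\alpha$ in the lemma being identified with the maximum value of $\Pi$, which is how it is defined there.
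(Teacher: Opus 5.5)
Your proposal is correct and follows essentially the same route as the paper: you use $\lambda H_I s(H_I)=\beta$ to get $\Pi(H_I)=(1-\beta)s(H_I)=\alpha^+$, and then conclude from $\hat\alpha$ being the largest value attained by $\Pi$ on $[0,\infty)$. Your explicit check that $H_I>0$, and your observation that only the existence of the maximum (not the concavity of $\Pi$) is needed, are small but welcome refinements of the paper's argument.
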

\begin{proof}
    The statement follows from the observation that there is a positive solution (in $H$) to the expression $\Pi(H)=\alpha^{+}$, which is equivalent to solving
    \begin{align*}
        (1-\beta)s(H_I) = s(H)-\lambda H s(H)^2.
    \end{align*}
    In particular, substituting in $H=H_I$, we find
    \begin{align*}
        1-\beta = 1-\lambda H_Is(H_I)
    \end{align*}
    or, equivalently, $\beta-\lambda H_{I}s(H_{I})=0$, which holds by the definition of the coexistence state $e_I$. Hence, we have shown that for $\alpha=\alpha^{+}$, $\Pi(H)=\alpha$ has the positive solution $H_I$; however, since $\hat\alpha$ is defined as the minimal $\alpha$-value such that $\Pi(H)=\alpha$ has no positive solution for $\alpha>\hat\alpha$, we must have $\alpha^{+}\leq \hat\alpha$, which completes the proof.
\end{proof}

The transcritical singularity \cite{MKrupa_2001} at $n_2=S_0^1\cap S_0^2$ is only located in the non-negative octant when $\alpha<d$ and merges with the algae-only equilibrium $e_A$ at $\alpha=d$. For $\alpha>d$, $S_0^2$ intersects the plane $\{C=0\}$ in the points $e_{nC}^{r}$ and $e_{nC}^a$, which are equilibria of the reduced problem, Equation~\eqref{reduced-system}, on $S_{0}^{2,r}$ and $S_{0}^{2,a}$, respectively. We note that, necessarily, $\alpha<\alpha^+\leq \hat{\alpha}$ in Figure~\ref{fig: S_0-stability}, since $e_I$ is shown to to be lying inside the non-negative octant. We now consider the stability of the equilibria on $S_0$. From the reduced flow on $S_0^0$ and $S_0^1$ in \eqref{reduced flow}, it is straightforward to see that $C=0$ corresponds to a repelling equilibrium on $S_0^0$, while $C=1-\beta(>0)$ and $C=0$ are attracting under the reduced flow on $S_0^0$ and $S_0^1$, respectively. The next result considers the stability of the equilibria of the reduced flow on $S_0^2$.

\begin{prop}
Assume that $\lambda d^2<1$, $\beta<1$, and $\alpha>d$. Then, there exists a unique $\alpha$-value $\alpha^\ast$, with $Q(H_I; \alpha^\ast)=0$, such that the  following holds.
\begin{enumerate}
    \item For $\alpha<\text{min}\{\alpha^+, \alpha^\ast\}$, $H_{nC}^r$ and $H_{I}$ are attracting equilibria for the reduced flow on $S_0^2$, whereas $H_{nC}^a$ is repelling under the reduced flow. Moreover, $H_{nC}^r<H_0<H_I<H_{nC}^a$, i.e., $H_I$ lies on the attracting branch $S_{0}^{2,a}$ of $S_0^2$.
    \item When $\alpha=\alpha^+<\alpha^\ast$, $H_{nC}^a$ and $H_{I}$ coalesce and undergo a transcritical bifurcation.
    \item For $\alpha^+<\alpha<\alpha^\ast$, $H_{nC}^a$ and $H_{nC}^r$ are attracting equilibria for the reduced flow on $S_0^2$. Moreover, $H_I$ is ecologically irrelevant due to $C_I<0$; additionally, $H_{nC}^r<H_0<H_{nC}^a<H_I$.
    \item When $\alpha=\alpha^\ast<\alpha^+$, $H_I=H_0$, with $H_I$ a non-hyperbolic fold point. Moreover, $H_{nC}^a$ is a repelling equilibrium for the reduced flow, whereas $H_{nC}^r$ is attracting.
    \item For $\alpha^\ast<\alpha<\alpha^+$, $H_I$ is a repelling equilibrium for the reduced flow. Moreover, $H_{nC}^r<H_I<H_0<H_{nC}^a$, i.e., $H_I$ lies on the repelling branch $S_{0}^{2,r}$ of $S_0^2$, with $H_{nC}^a$ repelling and $H_{nC}^r$ attracting.
\end{enumerate}
Moreover, 
\begin{equation}
\alpha^\ast=\alpha^\ast(\beta,\lambda,d)=\lambda s(H_I)^2\big[H_I+s(H_I)(1+H_I)^2 \big]
\label{alpha star of H_I}
\end{equation}
is a function of $\beta$, $\lambda$, and $d$.
\label{prop: equilibrium stability}
\end{prop}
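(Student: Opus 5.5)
We will work directly with the reduced flow on $S_0^2$ in \eqref{reduced flow}, written as $\frac{\dd H}{\dd\tau}=-\frac{(1+H)^2}{Q(H;\alpha)}\,\Pi_C(H)\,(\lambda H s(H)-\beta)$, where $\Pi_C(H)=1-\frac{\alpha}{s(H)}-\lambda Hs(H)=C(H)$ is the $C$-coordinate along $S_0^2$. Thus the equilibria on $S_0^2$ are the zeros of $C(H)$, i.e.\ $H_{nC}^{r}$ and $H_{nC}^{a}$, and the zero $H_I$ of $g(H)\equiv\lambda Hs(H)-\beta$. Since $Hs(H)$ is strictly increasing, $g$ is increasing with unique root $H_I$, independent of $\alpha$. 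By Lemma~\ref{lemma: Stability of Critical Manifold}, $Q$ is increasing in $H$ with unique root $H_0$. Moreover $Q$ is decreasing in $\alpha$, and $C(H)$ is increasing on $(0,H_0)$ and decreasing on $(H_0,\infty)$, because $C'(H)=Q(H;\alpha)\,s'(H)\cdot(\text{sign-fixed factor})$; concretely $C'=-Q/(1+H)^2$ from the relation used in the proof of the reduced flow. Stability of each equilibrium is then read off from the sign of the derivative of the right-hand side at that zero, and only the other two factors contribute there.

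First we define $\alpha^\ast$ by $Q(H_I;\alpha^\ast)=0$. Solving the linear equation in $\alpha$ gives \eqref{alpha star of H_I} directly, and hence uniqueness and dependence only on $(\beta,\lambda,d)$ through $H_I$. Since $Q(H_I;\alpha)$ is decreasing in $\alpha$, we get $Q(H_I;\alpha)>0$ for $\alpha<\alpha^\ast$, so $H_I>H_0$ and $H_I$ lies on $S_0^{2,a}$. For $\alpha>\alpha^\ast$ we get $H_I<H_0$, and at $\alpha=\alpha^\ast$ we get $H_I=H_0$. Next we locate $H_I$ relative to $H_{nC}^{r,a}$. We have $C(H_I)=C_I=1-\beta-\alpha/s(H_I)$, which is positive iff $\alpha<\alpha^+$. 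Since $C$ is unimodal with zeros $H_{nC}^r<H_0<H_{nC}^a$ (Lemma~\ref{lem: S_0^2 intersections} and Corollary~\ref{cor: hat bound} for $d<\alpha<\hat\alpha$), $C_I>0$ places $H_I\in(H_{nC}^r,H_{nC}^a)$, and $C_I<0$ places $H_I$ outside this interval. Combining this with the $H_0$ comparison yields the orderings in items 1, 3 and 5. In item 3 we must show $H_I>H_{nC}^a$ rather than $H_I<H_{nC}^r$. For this we use that $H_I>H_0$ when $\alpha<\alpha^\ast$, and that $C$ is decreasing beyond $H_0$.

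For stability, we linearize at each zero. At $H_I$, the derivative is $-\frac{(1+H_I)^2}{Q}C_I\,g'(H_I)$ with $g'>0$. It is therefore negative (attracting) iff $C_I/Q>0$, which holds in case 1 ($C_I>0$, $Q>0$) and fails in case 5 ($C_I>0$, $Q<0$). At $H_{nC}^{r,a}$, the derivative is $-\frac{(1+H)^2}{Q}C'(H)g(H)=g(H)$ up to the positive factor, using $C'=-Q/(1+H)^2$. Hence $H_{nC}$ is attracting iff $g(H_{nC})<0$, i.e.\ iff $H_{nC}<H_I$. This immediately gives: $H_{nC}^r$ attracting and $H_{nC}^a$ repelling whenever $H_{nC}^r<H_I<H_{nC}^a$ (items 1, 4, 5), and both attracting when $H_I>H_{nC}^a$ (item 3). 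Item 2 follows because at $\alpha=\alpha^+$ we have $C_I=0$, so $H_I=H_{nC}^a$ (since $H_I>H_0$ there), and the two zeros exchange stability as $\alpha$ crosses $\alpha^+$, which is the transcritical bifurcation. Item 4 follows because $Q(H_I)=0$ at $\alpha=\alpha^\ast$, making $H_I$ coincide with the fold $n_0$. There the vector field has a removable-looking singularity of type $g/Q$, and the paper only asks us to record non-hyperbolicity and the stability of the $H_{nC}$.

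The main obstacle is establishing cleanly the identity $C'(H)=-Q(H;\alpha)/(1+H)^2$, which underlies both the unimodality and the cancellation at $H_{nC}$. We will verify it by differentiating $C(H)=1-\alpha/s-\lambda Hs$ using $s'(H)=(1+H)^{-2}$. This gives $C'=\alpha s'/s^2-\lambda(s+Hs')$, which should reduce to $-Q/(1+H)^2$ after multiplying through by $(1+H)^2$. A secondary care point is ensuring that $\alpha<\hat\alpha$ throughout items 1 to 5, so that both $H_{nC}^{r,a}$ exist. In items 1, 2 and 5 this follows from Corollary~\ref{cor: hat bound}. In item 3 we must additionally restrict to $\alpha<\hat\alpha$ or argue that it holds implicitly.
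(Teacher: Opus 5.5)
Your proposal is correct and follows essentially the same route as the paper's proof (Appendix~\ref{sec: Stability of internal equilibrium}). You factor the reduced flow, use that $Q(H_I;\alpha)$ is affine and decreasing in $\alpha$ to place $H_I$ relative to $H_0$, use the sign of $C_I$ to place it relative to $H_{nC}^{r,a}$, and use $C'(H)=-Q(H;\alpha)/(1+H)^2$ to reduce stability at $H_{nC}^{r,a}$ to the sign of $\lambda Hs(H)-\beta$. Your argument via $C(H_I)=C_I$ and unimodality of $C$ is slightly cleaner than the paper's continuation from $\alpha=\alpha^+$.

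Your caveat on item 3 cannot be argued away. The regime $\alpha^+<\alpha^\ast<\hat\alpha$ never occurs (see the discussion after Proposition~\ref{prop: possible bifurcations}), so $\alpha^+<\alpha^\ast$ forces $\hat\alpha<\alpha^\ast$. Hence the restriction $\alpha<\hat\alpha$ must be imposed explicitly in item 3 for $H_{nC}^{r,a}$ to exist, a point the paper's proof passes over.
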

\begin{proof}
See Appendix~\ref{sec: Stability of internal equilibrium}.
\end{proof}
\begin{remark}
We do not consider the case where $\alpha>\max\{\alpha^+,\alpha^\ast\}$ in Proposition~\ref{prop: equilibrium stability}, as it will turn out to be irrelevant to our analysis.
\end{remark}
Representative geometries resulting from the ``generic" regimes in items 1, 3, and 5 of Proposition~\ref{prop: equilibrium stability} are illustrated in Figure~\ref{fig:S_0^2 cases}.

\begin{figure}[H]
    \centering
    \begin{subfigure}[t]{0.45\textwidth}
        \centering
        \includegraphics[scale = 1]{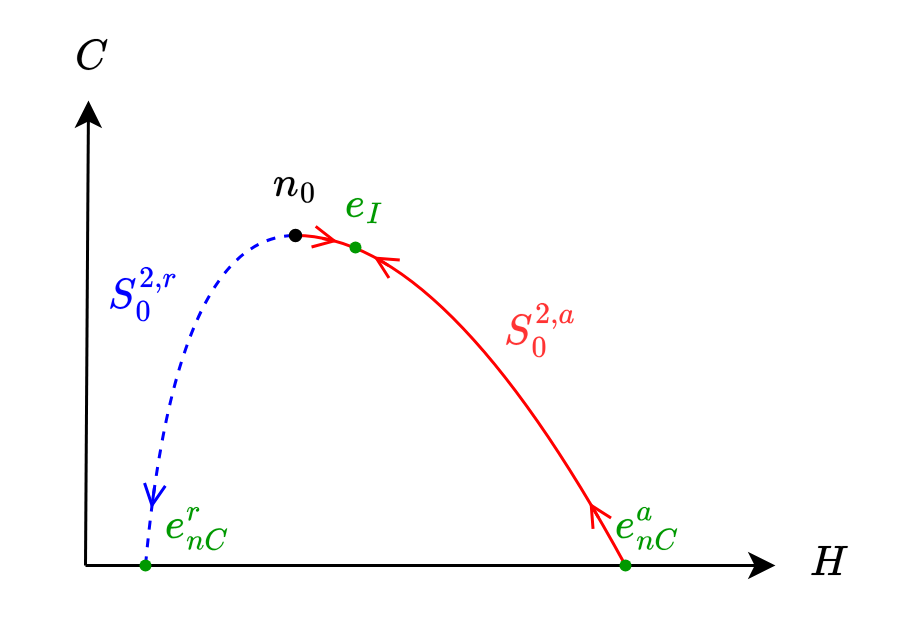}
        \subcaption{$\alpha<\text{min}\{\alpha^+, \alpha^\ast\}$} 
    \end{subfigure}
    \hfill
    \begin{subfigure}[t]{0.45\textwidth}
        \centering
        \includegraphics[scale = 1]{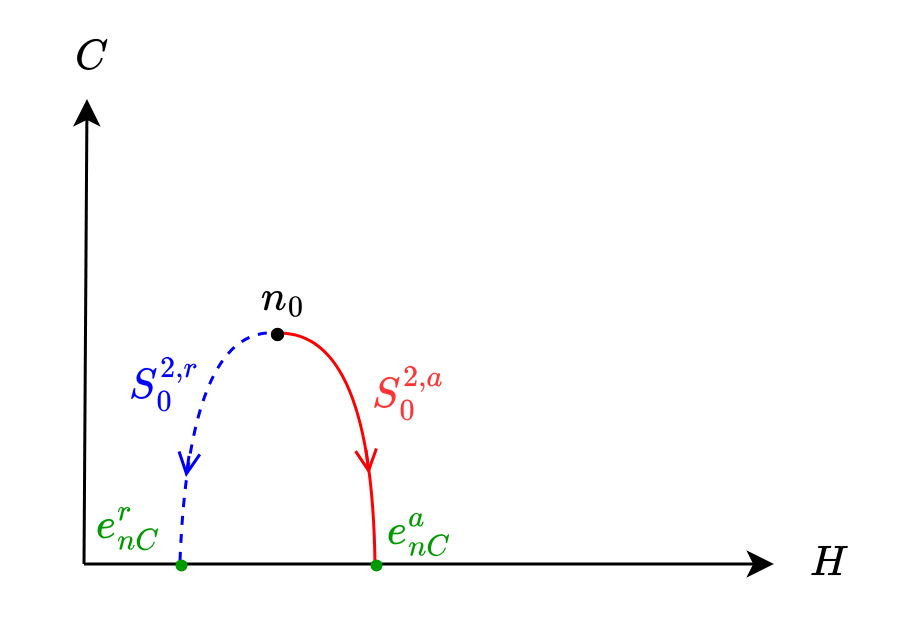}
        \subcaption{$\alpha^+<\alpha<\alpha^\ast$}
    \end{subfigure}
    \hfill
    \begin{subfigure}[t]{0.45\textwidth}
        \centering
        \includegraphics[scale = 1]{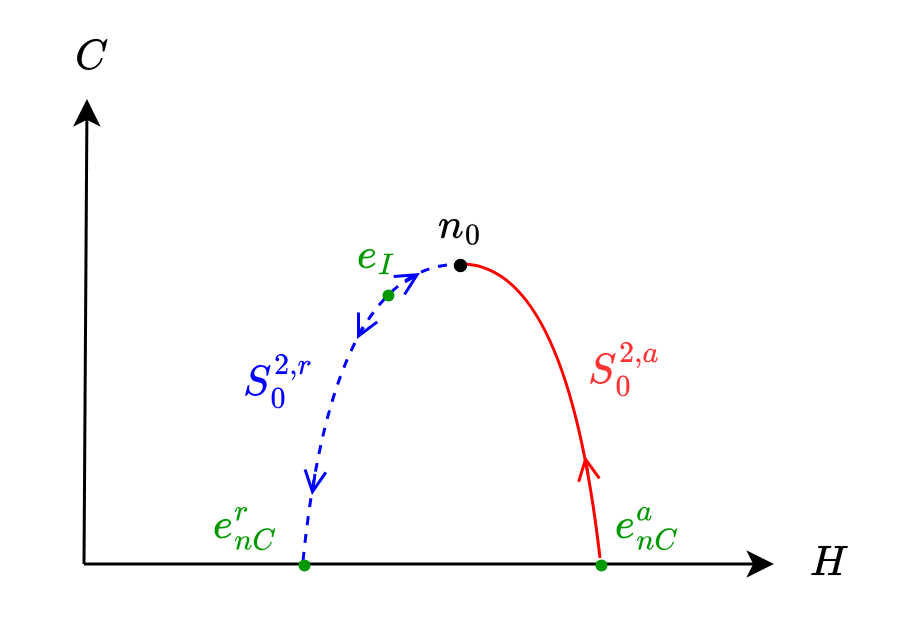}
        \subcaption{$\alpha^\ast<\alpha<\alpha^+$}
    \end{subfigure}
    \caption{Reduced flow on $S_0^2$ for $\alpha>d$; cf.~Equation~\eqref{reduced flow} and Proposition~\ref{prop: equilibrium stability}.}
    \label{fig:S_0^2 cases}
\end{figure}

\begin{remark}
    In the case where $\alpha^\ast<\alpha^{+}$, the transcritical bifurcation in Proposition~\ref{prop: equilibrium stability}, item 2 occurs between $e_{I}$ and $e_{nC}^{r}$, instead of between $e_{I}$ and $e_{nC}^a$. That case is not of interest, however, as $e_{nC}^r$ is located on the unstable branch of the sub-manifold $S_0^2$ then.
\end{remark}
\begin{prop}Suppose that  $\lambda d^2<1$ and that any of $\alpha^{+}$, $\alpha^\ast$, and $\hat\alpha$ are pairwise equal. Then, it follows that $\alpha^{+}=\alpha^\ast=\hat\alpha$.
\label{Prop: alpha equality}
\end{prop}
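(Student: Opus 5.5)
The plan is to reduce all three pairwise equalities to the single condition $H_I=\hat H$, where $\hat H$ is the unique maximiser of $\Pi$ from Lemma~\ref{lem: S_0^2 intersections}. I would first express all three thresholds through the function $\Pi(H)=s(H)-\lambda Hs(H)^2$ of \eqref{eq: Pi(H)}. By the proof of Corollary~\ref{cor: hat bound}, $\alpha^+=\Pi(H_I)$, since $\lambda H_Is(H_I)=\beta$. By Lemma~\ref{lem: S_0^2 intersections}, $\hat\alpha=\Pi(\hat H)$, where $\Pi$ is concave under $\lambda d^2<1$.

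The key identity connects $\alpha^\ast$ to $\Pi'$. Using $s'(H)=(1+H)^{-2}$, a direct computation gives
\begin{equation*}
-s(H)(1+H)^2\,\Pi'(H)=-s(H)+\lambda s(H)^3(1+H)^2+2\lambda Hs(H)^2 .
\end{equation*}
Comparing with \eqref{alpha star of H_I}, this yields
\begin{equation*}
\alpha^\ast-\alpha^+=\lambda s(H_I)^2\big[H_I+s(H_I)(1+H_I)^2\big]-\Pi(H_I)=-s(H_I)(1+H_I)^2\,\Pi'(H_I).
\end{equation*}
(Equivalently, $\alpha^\ast$ is the value of $\alpha$ at which $H_I$ is the fold of $C(H)=1-\alpha/s-\lambda Hs$. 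Since $C'(H)=-Q(H;\alpha)/(1+H)^2$, this is consistent with $Q(H_I;\alpha^\ast)=0$.)

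I would then treat the three pairs.
\begin{itemize}
\item[(i)] If $\alpha^\ast=\alpha^+$, then $\Pi'(H_I)=0$. Since $\Pi'$ is strictly decreasing, $\hat H$ is the only critical point, so $H_I=\hat H$ and hence $\alpha^+=\Pi(\hat H)=\hat\alpha$.
\item[(ii)] If $\alpha^+=\hat\alpha$, then $\Pi(H_I)=\max\Pi$. Uniqueness of the maximiser gives $H_I=\hat H$, so $\Pi'(H_I)=0$ and therefore $\alpha^\ast=\alpha^+$.
\item[(iii)] The case $\alpha^\ast=\hat\alpha$ is the main obstacle. Here I would show that $H_I\neq\hat H$ forces $\alpha^\ast\neq\hat\alpha$, splitting by the sign of $\Pi'(H_I)$.
\end{itemize}

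For case (iii), suppose first that $H_I<\hat H$. Then $\Pi'(H_I)>0$, so $\alpha^\ast<\alpha^+\le\hat\alpha$ by Corollary~\ref{cor: hat bound}.

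Suppose instead that $H_I>\hat H$, so $\Pi'(H_I)<0$. Concavity gives the tangent-line bound
\begin{equation*}
\hat\alpha=\Pi(\hat H)\le\Pi(H_I)+\Pi'(H_I)(\hat H-H_I)=\alpha^++|\Pi'(H_I)|(H_I-\hat H).
\end{equation*}
It then suffices to show $s(H_I)(1+H_I)^2>H_I-\hat H$. This holds because $s(H)>H/(1+H)$ (as $d>0$), which gives $s(H_I)(1+H_I)^2>H_I(1+H_I)\ge H_I\ge H_I-\hat H$. Hence $\alpha^\ast=\alpha^++s(H_I)(1+H_I)^2|\Pi'(H_I)|>\hat\alpha$.

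Thus $\alpha^\ast=\hat\alpha$ forces $H_I=\hat H$, and by (i)–(ii) all three values coincide. The only computation to verify carefully is the identity for $-s(1+H)^2\Pi'(H)$.
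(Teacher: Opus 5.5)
Your proof is correct, and for the third pair it takes a different route from the paper's. The paper works with the curve $C(H)=1-\alpha/s(H)-\lambda Hs(H)=(\Pi(H)-\alpha)/s(H)$ and the relation $C'(H)=-Q(H;\alpha)/(1+H)^2$. It reads each equality as saying that $H_I$ is a double root of $C$, i.e.\ that $e_I$, the fold $n_0$ and the saddle-node point coincide.

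Your identity $\alpha^\ast-\alpha^+=-s(H_I)(1+H_I)^2\Pi'(H_I)$ is the algebraic content of the paper's step ``verify directly that $C(H_I)=0=C'(H_I)$''. So your cases (i) and (ii) are essentially the paper's argument, made explicit.

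The difference is the pair $(\alpha^\ast,\hat\alpha)$, which the paper handles by ``reversibility'': $Q(H_I;\hat\alpha)=0$ makes $H_I$ the unique critical point of $C(\cdot\,;\hat\alpha)$, and that point must be $\hat H$. You instead split on the sign of $H_I-\hat H$. On one side you use Corollary~\ref{cor: hat bound}; on the other you use a concavity tangent-line bound together with $s(H_I)(1+H_I)^2>H_I-\hat H$.

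For this case that is more work than needed. The facts $Q(H_I;\alpha)=(\alpha^\ast-\alpha)/s(H_I)^2$ and $Q(\hat H;\hat\alpha)=0$, together with strict monotonicity of $Q(\cdot\,;\hat\alpha)$ from Lemma~\ref{lemma: Stability of Critical Manifold}, give $\mathrm{sign}(\alpha^\ast-\hat\alpha)=\mathrm{sign}(H_I-\hat H)$ in one line.

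Your sign split does buy more than the statement asks for. It yields the strict orderings $\alpha^\ast<\alpha^+<\hat\alpha$ when $H_I<\hat H$ and $\alpha^+<\hat\alpha<\alpha^\ast$ when $H_I>\hat H$. That is exactly Proposition~\ref{prop: ordering}. It also rules out the regime $\alpha^+<\alpha^\ast<\hat\alpha$ directly, without the monotonicity-in-$\beta$ argument of Proposition~\ref{prop: possible bifurcations}.
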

\begin{proof}
We first assume that $\alpha^{+}=\hat\alpha$. From the definition of $S_0^2$ in \eqref{eq: S_0}, we recall that the curve $C(H)$ is defined by
\begin{align}
    C(H) &= 1-\frac{\alpha}{s(H)}-\lambda Hs(H). 
    \label{eq: C(H)}
\end{align}
By Lemma~\ref{lem: S_0^2 intersections}, the function in \eqref{eq: C(H)} has a unique positive root for $\alpha=\hat{\alpha}$ and is therefore maximised at $\hat{\alpha}$. Thus, $C'(H)=0$ when $\alpha=\hat{\alpha}$, where 
\begin{equation}
    C'(H) = \frac{\alpha}{s(H)^2(1+H)^2}-\lambda\bigg[s(H)+\frac{H}{(1+H)^2}\bigg] = -\frac{Q(H;\alpha)}{(1+H)^2}.
    \label{eq: C'(H)}
\end{equation}
Furthermore, it was shown in the proof of Corollary~\ref{cor: hat bound}, that $\Pi(H)=\alpha^+$ has a positive solution given by $H=H_I$. Evaluating $C'(H)=0$ in \eqref{eq: C'(H)} at $H=H_I$, we find $Q(H_I;\hat\alpha)=0$, which, by Proposition~\ref{prop: equilibrium stability}, implies $\hat\alpha=\alpha^\ast$.

Since that argument is reversible, the statement equally follows if we assume $\alpha^\ast=\hat\alpha$. Finally, for the case where $\alpha^+=\alpha^\ast$, one can verify directly that $H_{I}$ is a solution to both $C(H)=0$ and $C'(H)=0$, which completes the proof.
\end{proof}

\subsection{Bifurcation analysis}

Corollary~\ref{cor: hat bound} and Proposition~\ref{Prop: alpha equality} essentially define three distinct regimes for $\alpha$: $\alpha^\ast<\alpha^{+}<\hat\alpha$, $\alpha^{+}<\alpha^\ast<\hat\alpha$, and $\alpha^{+}<\hat\alpha<\alpha^\ast$. Our next result concerns the potential for transitions between these regimes. 
\begin{prop}
Assume that $\lambda d^2<1$, and let $\beta_0<1$ be given. Furthermore,  suppose that $\alpha^{+}(\beta_0,\lambda,d)<\alpha^\ast(\beta_0,\lambda,d)<\hat\alpha(\lambda,d)$ when $\beta=\beta_0$. Then, for all $\beta_0<\beta<1$, we must have $\alpha^{+}(\beta,\lambda,d)<\alpha^\ast(\beta,\lambda,d)<\hat\alpha(\lambda,d)$, i.e., no transition between these $\alpha$-regimes can occur under variation of $\beta$.
\label{prop: possible bifurcations}
\end{prop}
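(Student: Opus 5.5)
A continuity argument in $\beta$ settles this. The three quantities $\alpha^{+}(\beta,\lambda,d)=(1-\beta)s(H_I)$, $\alpha^\ast(\beta,\lambda,d)$ from \eqref{alpha star of H_I} and $\hat\alpha(\lambda,d)$ depend continuously on $\beta\in(0,1)$. Indeed, $H_I$ in \eqref{eq: coexistence point} is a continuous (in fact smooth) function of $\beta$, and $\hat\alpha$ does not depend on $\beta$ at all. Suppose the ordering $\alpha^{+}<\alpha^\ast<\hat\alpha$, which holds at $\beta=\beta_0$, failed at some $\beta_1\in(\beta_0,1)$. By continuity there would be a first value $\bar\beta\in(\beta_0,\beta_1]$ at which two of the three quantities coincide. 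Proposition~\ref{Prop: alpha equality} then forces $\alpha^{+}=\alpha^\ast=\hat\alpha$ at $\bar\beta$. Corollary~\ref{cor: hat bound} already gives $\alpha^{+}\le\hat\alpha$ for every $\beta$, so the only admissible transitions pass through this triple coincidence. It therefore suffices to show that the triple coincidence cannot be reached from the regime $\alpha^{+}<\alpha^\ast<\hat\alpha$ as $\beta$ increases.

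To rule this out, I will exploit monotonicity in $\beta$. First I will verify that $H_I$ is strictly increasing in $\beta$. This follows since $\lambda H s(H)$ is strictly increasing in $H$ and $\lambda H_I s(H_I)=\beta$.

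Consequently, $\alpha^\ast$, which by \eqref{alpha star of H_I} is a product of positive increasing functions of $H_I$, is strictly increasing in $\beta$.

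Next, I will use the characterisation in the proof of Proposition~\ref{Prop: alpha equality}. The triple coincidence occurs exactly when $H_I=\hat H$, i.e., when $H_I$ is the maximiser of $\Pi$. Here $\Pi(H_I)=\alpha^{+}$, since $\Pi(H_I)=s(H_I)(1-\lambda H_I s(H_I))=(1-\beta)s(H_I)$. Since $\Pi$ is concave with unique maximiser $\hat H$, the equality $\alpha^+=\hat\alpha$ holds iff $H_I=\hat H$. Because $H_I$ is strictly monotone in $\beta$, this happens for at most one value $\beta=\beta_c$.

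It remains to determine on which side of $\beta_c$ the assumed ordering holds. Compare $\alpha^\ast$ with $\hat\alpha$ via $Q$. Since $Q(\cdot;\alpha)$ is increasing in $H$ (Lemma~\ref{lemma: Stability of Critical Manifold}), and $Q(H;\alpha)$ is decreasing in $\alpha$, the sign of $Q(H_I;\hat\alpha)$ tells us whether $\alpha^\ast>\hat\alpha$. Writing $\hat H$ as the root of $Q(\cdot;\hat\alpha)$ (i.e. $C'(\hat H)=0$ at $\alpha=\hat\alpha$), we get $\alpha^\ast<\hat\alpha$ iff $Q(H_I;\hat\alpha)<0$ iff $H_I<\hat H$, i.e. iff $\beta<\beta_c$. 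Hence the hypothesis $\alpha^\ast<\hat\alpha$ at $\beta_0$ gives $\beta_0<\beta_c$.

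At this point the argument runs into trouble. Increasing $\beta$ pushes $H_I$ towards $\hat H$, which looks like a transition \emph{can} occur. I therefore expect the main obstacle to be the orientation: I must recheck which of the regimes $\alpha^\ast\lessgtr\hat\alpha$ corresponds to $H_I\lessgtr\hat H$. One needs to check carefully whether $\alpha^\ast$ increases faster than $\hat\alpha$ (constant), and whether the hypothesised regime in fact corresponds to $H_I>\hat H$, so that increasing $\beta$ moves $H_I$ away from $\hat H$. I would settle this by directly computing $\partial_\beta(\hat\alpha-\alpha^\ast)$ and $\partial_\beta(\alpha^\ast-\alpha^+)$, and checking their signs using $\frac{\dd H_I}{\dd\beta}>0$. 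Once the correct orientation gives $\beta_c\notin(\beta_0,1)$, the continuity argument closes the proof.
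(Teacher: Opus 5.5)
Your reduction is correct up to where you stop. Any transition must pass through a triple coincidence (Proposition~\ref{Prop: alpha equality}), this happens exactly when $H_I=\hat H$, and hence for at most one value $\beta_c$. Your equivalence $\alpha^\ast<\hat\alpha\iff Q(H_I;\hat\alpha)<0\iff H_I<\hat H$ is also right. The gap is the last step. There is no orientation error to repair: recomputing $\partial_\beta(\hat\alpha-\alpha^\ast)=-\partial_\beta\alpha^\ast<0$ will only confirm what you found. Moreover, $\beta_c\notin(\beta_0,1)$ cannot be derived from $\alpha^\ast(\beta_0)<\hat\alpha$ alone. For $d=0.22$ and $\lambda=0.4$ one finds $\hat H\approx0.80$ and $\beta_c=\lambda\hat H s(\hat H)\approx0.21$. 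So $\beta_0=0.2$ satisfies $\alpha^\ast(\beta_0)<\hat\alpha$, yet $\beta_c\in(\beta_0,1)$. Transitions really do occur as $\beta$ increases (across $\mathcal{C}$); they just never start from the regime named in the hypothesis.

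The missing idea is the first half of the hypothesis, $\alpha^+<\alpha^\ast$, which you never use. Using $\alpha^+=\Pi(H_I)$, which you already derived, a short computation gives
\[
\alpha^\ast-\alpha^+=-s(H_I)\bigl[1-2\lambda H_Is(H_I)-\lambda s(H_I)^2(1+H_I)^2\bigr]=-s(H_I)(1+H_I)^2\,\Pi'(H_I).
\]
Since $\Pi'$ is strictly decreasing with unique zero $\hat H$, this gives $\alpha^+<\alpha^\ast\iff H_I>\hat H$. Combined with your $\alpha^\ast<\hat\alpha\iff H_I<\hat H$, the ordering $\alpha^+<\alpha^\ast<\hat\alpha$ holds for no $\beta_0$ whatsoever. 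The proposition is therefore vacuously true, matching the paper's remark right after it that this regime never occurs.

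Alternatively, you could finish without the identity. Evaluate at $\beta=0$, where $\alpha^+-\alpha^\ast=d(1-\lambda d^2)>0$, and use continuity on $[0,\beta_c)$ (no pairwise coincidences there) together with your sign result for $\beta>\beta_c$.

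The paper reaches the same obstruction locally rather than globally. At a triple point, $\alpha^+$ touches $\hat\alpha$ from below with zero $\beta$-derivative, while $\alpha^\ast$ crosses $\hat\alpha$ with positive derivative, so $\alpha^\ast$ cannot stay wedged between them. Your global route, once completed, is arguably cleaner: it classifies the ordering on both sides of $\beta_c$ rather than only near the coincidence.
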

\begin{proof}
We proceed by contradiction. For a transition to be possible, we require a value of $\beta$ such that any two of $\alpha^+$, $\alpha^\ast$, and $\hat\alpha$ are pairwise equal. We assume that such a $\beta$-value exists, which we denote by $\beta_1$. Then, by Proposition~\ref{Prop: alpha equality}, $\alpha^+(\beta_1,\lambda,d)=\alpha^\ast(\beta_1,\lambda,d)=\hat\alpha(\lambda,d)$. As $\hat\alpha$ is independent of $\beta$ and as $\alpha^{+}\leq \hat{\alpha}$, we may conclude that $\frac{\partial\hat\alpha}{\partial \beta}\rvert_{\beta=\beta_1}=0=\frac{\partial\alpha^{+}}{\partial \beta}\rvert_{\beta=\beta_1}$. In particular, for $\alpha^\ast$ to remain wedged between $\alpha^+$ and $\hat\alpha$, its derivative must also be zero at $\beta=\beta_1$. However, one may verify that $\frac{\partial\alpha^\ast}{\partial \beta}=\frac{\partial H_I}{\partial \beta} \frac{\partial \alpha^\ast}{\partial H_I}>0$ for all $\beta$, due to $H_I\equiv H_I(\beta)$ and $\alpha^\ast\equiv\alpha^{\ast}(H_I)$, as defined in \eqref{eq: coexistence point} and \eqref{alpha star of H_I}, respectively, being strictly monotonically increasing with respect to $\beta$ and $H_I$, respectively; see Figure~\ref{fig: proof of alpha regiems} for an illustrative sketch.
\end{proof}
\begin{figure}[H]
    \centering
    \includegraphics[scale = 1.2]{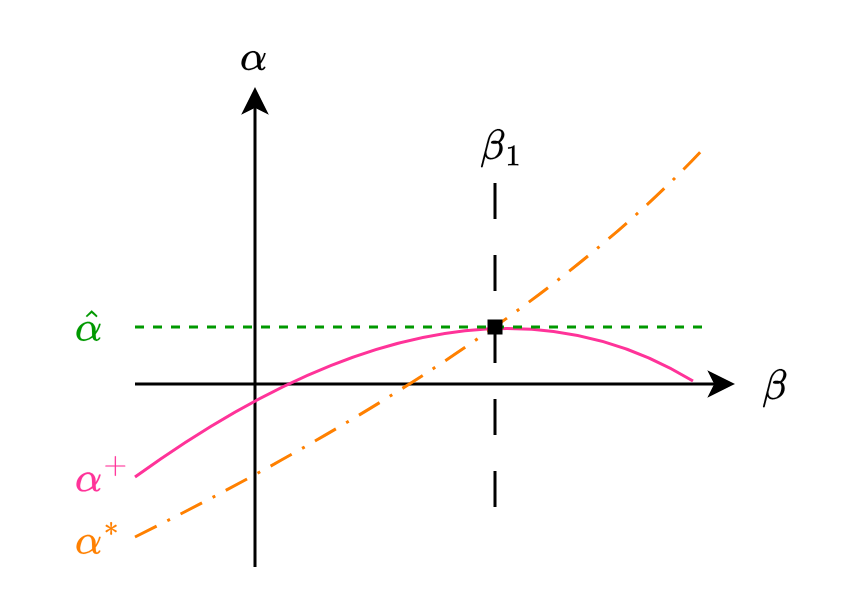}
    \caption{Illustrative sketch of the proof of Proposition~\ref{prop: possible bifurcations}.}
    \label{fig: proof of alpha regiems}
\end{figure}

Proposition \ref{prop: possible bifurcations} ensures that only transitions between the regimes where $\alpha^\ast<\alpha^{+}<\hat\alpha$ or $\alpha^{+}<\hat\alpha<\alpha^\ast$ are possible. In particular, we can evaluate $\alpha^+$ and $\alpha^\ast$ at $\beta=0$ to determine which of these regimes are feasible for any positive $\beta(<1)$. We note that we have $H_I=0$ when $\beta=0$, which implies $\alpha^{+}=d$ and $\alpha^\ast=\lambda d^3$; recall Equations~\eqref{eq: alpha^+} and \eqref{alpha star of H_I}, respectively. In particular, $\alpha^{+}-\alpha^\ast=d(1-\lambda d^2)>0$ when $\beta=0$, placing us within the regime where $\alpha^\ast<\alpha^{+}<\hat\alpha$. It follows that the regime where $\alpha^{+}<\alpha^\ast<\hat\alpha$ will never occur for any value of $\beta$, and that it hence need not be considered. For $d=0.22$ fixed, the two regimes of interest are illustrated in Figure~\ref{fig: minimum alpha plot}.

\begin{figure}[H]
    \centering
    \includegraphics[width=0.5\linewidth]{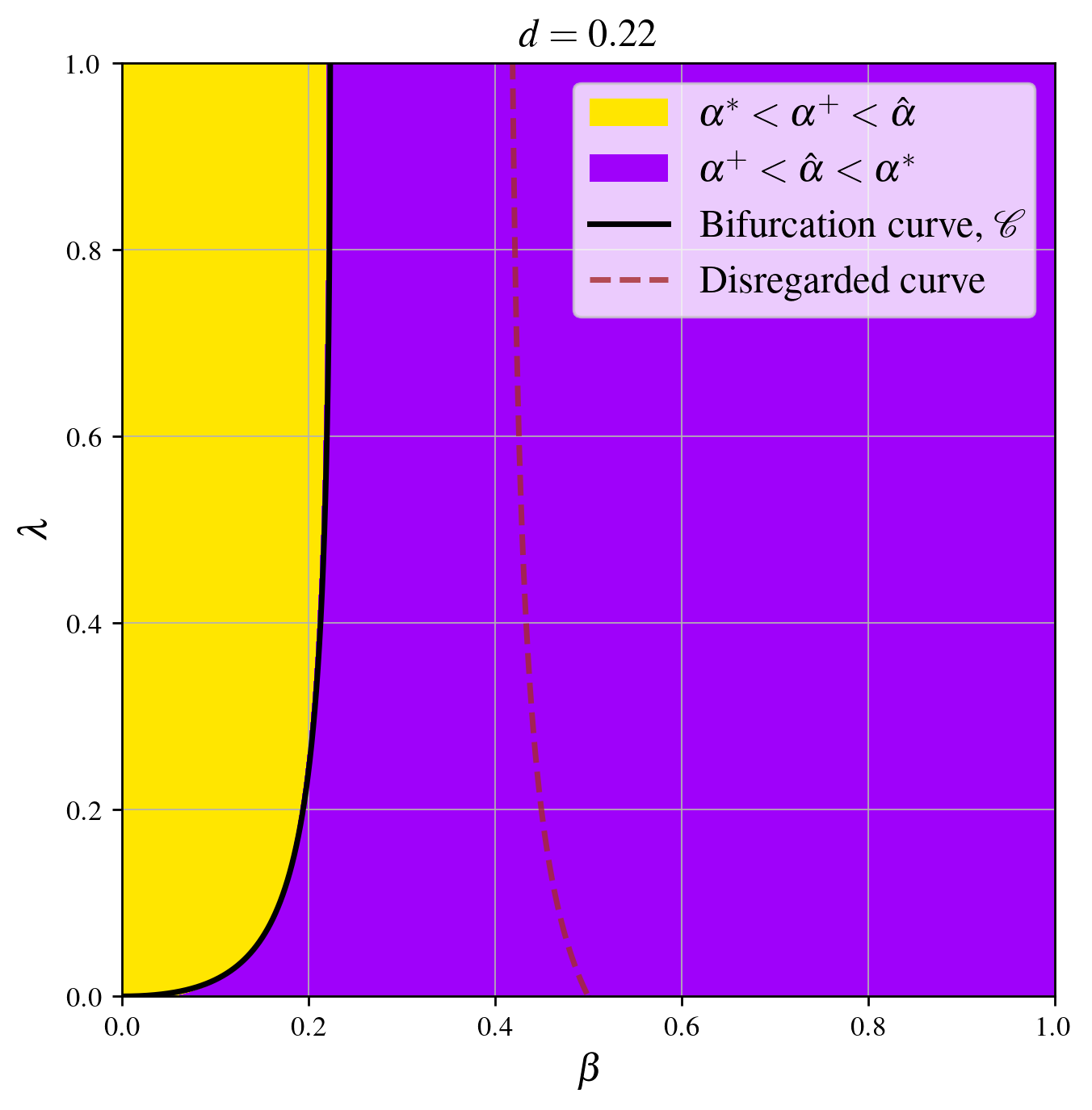}
    \caption{Illustration of the two relevant regimes for $\alpha^+$, $\alpha^\ast$, and $\hat\alpha$, with $\beta, \lambda\in (0,1)^2$ and $d=0.22$ fixed. The regime where $\alpha^\ast<\alpha^{+}<\hat\alpha$ ($\alpha^{+}<\hat\alpha<\alpha^{\ast}$) is shaded in yellow (purple); the bifurcation curve $\mathcal{C}$ separating the two regimes (solid black) is defined in Equation~\eqref{eq: bifurcation curve}, along with a disregarded curve (dashed brown)}
    \label{fig: minimum alpha plot}
\end{figure}

The bifurcation curve $\mathcal{C}$ shown in Figure~\ref{fig: minimum alpha plot} that marks the transition between the two regimes is given by the solution of $\alpha^\ast=\alpha^{+}$ for $\lambda$, with $H_I$ substituted in from \eqref{eq: coexistence point}, which yields
\begin{equation}
    \lambda=-\frac{2\beta d(2\beta-1)+(3\beta-1)^2\pm(3\beta-1)\sqrt{4\beta d(2\beta-1)+(3\beta-1)^2}}{2d^2(2\beta-1)}.
    \label{eq: bifurcation curve}
\end{equation}

The union of the two solution branches in \eqref{eq: bifurcation curve} defines two disjoint curves in $(0,1)^2$. One can show that only one of those curves -- the curve $\mathcal{C}$ indicated in solid black in Figure~\ref{fig: minimum alpha plot} -- is relevant as a bifurcation curve, though, as it satisfies $\lambda\geq\frac{\beta^2}{1-2\beta-2\beta d}$, while the other curve (indicated in dashed brown) does not: for $\lambda<\frac{\beta^2}{1-2\beta-2\beta d}$, 
\begin{align*}
    \alpha^\ast-\alpha^{+} = s(H_I)\bigg(2\beta-1+\frac{\beta^2}{\lambda}+2\beta s(H_I)+\lambda s(H_I)^2\bigg)\geq s(H_I)\bigg(2\beta-1+\frac{\beta^2}{\lambda}+2\beta d\bigg)>0,
\end{align*}
which contradicts the definition of a bifurcation curve ($\alpha^\ast=\alpha^{+}$). Hence, that other curve can be discounted. (Here, we have again used $\lambda s(H_I)H_I=\beta$.) 

We conclude this section by summarising our key findings. We emphasise that our main aim here was to show that there exist parameter regimes where Equation~\eqref{fast-system} represents a bistable fast-slow system. 

Recall that $e_A=(0,1,0)$ lies on $S_0^1$, which is normally attracting in a neighbourhood of $e_A$ provided $\alpha>d$; cf. Lemma~\ref{lemma: Stability of Critical Manifold}. It follows from \eqref{reduced flow} that the point $A=1$ is always an attracting equilibrium for the reduced flow on $S_0^1$. Therefore, to ensure bistability, we need to show the existence of ecologically relevant local attractors, under the reduced flow, on the attracting portion $S_0^{2,a}$ of $S_0^2$. By Proposition~\ref{prop: equilibrium stability}, there are three non-trivial $\alpha$-regimes to consider: $\alpha<\min\{\alpha^+, \alpha^{\ast}\}$, $\alpha^+<\alpha<\alpha^\ast$, and $\alpha^\ast<\alpha<\alpha^+$, which correspond to items 1, 3, and 5 therein, respectively; see also Proposition~\ref{prop: possible bifurcations} and Figure~\ref{fig: minimum alpha plot}.

We first note that the equilibrium $e_{nC}^a$ lies on $S_0^{2,a}$ in all three regimes, and that it is hence ecologically relevant by definition; by contrast, $e_I$ lies in the positive octant provided $C_I>0$, which is the case when $\alpha<\text{min}\{\alpha^+, \alpha^\ast\}$ (Proposition~\ref{prop: equilibrium stability}, item 1); moreover, $e_I\in S_0^{2,a}$ then, with $H_I$ being attracting under the reduced flow. When $\alpha^+<\alpha<\alpha^\ast$ (Proposition~\ref{prop: equilibrium stability}, item 3), $e_I$ is irrelevant due to $C_I<0$; however, $H_{nC}^a$ is attracting. Finally, for $\alpha^{\ast}<\alpha<\alpha^+$ (Proposition~\ref{prop: equilibrium stability}, item 5), neither of the equilibria $e_I$ and $e_{nC}^a$ is an attractor under the reduced flow on $S_0^2$; hence, we can discount that regime.

Due to $S_0^1$ and $S_0^2$ being normally hyperbolic in a neighbourhood of $e_A$, $e_{nC}^a$, and $e_I$ for $\alpha>d$ and $\alpha\neq \alpha^+,\alpha^\ast$, it follows from standard persistence results \cite[Theorem $3.1.4$]{MTSD} that $e_A$ and $e_I$ are attractors for Equation~\eqref{fast-system} when $\varepsilon\in(0, \varepsilon_0)$, with $\varepsilon_0$ sufficiently small and $\alpha<\text{min}\{\alpha^+, \alpha^\ast\}$. Likewise, $e_A$ and $e_{nC}^a$ are attractors for \eqref{fast-system} when $\alpha^+<\alpha<\alpha^\ast$. In sum, we hence observe bistability in those two $\alpha$-regimes.

Finally, we emphasise that the regimes corresponding to items 2 and 4 in Proposition~\ref{prop: equilibrium stability} equally cause coral to go extinct. Specifically, at $\alpha=\alpha^+<\alpha^\ast$, the coexistence state $e_I$ which was initially a local attractor for \eqref{fast-system} exchanges stability with $e_{nC}^a$ in a transcritical bifurcation. As a results, we observe extinction of the coral population, while the populations of fish and algae increase. Ecologically speaking, an increase in the fishing effort $\alpha$ causes a reduction in the consumption of fast-evolving algae, which then out-compete coral and drive it to extinction. Similarly, when $\alpha=\alpha^\ast<\alpha^+$, the local attractor $e_I$ loses stability, with $e_A$  the only remaining attractor for $\alpha^\ast<\alpha<\alpha^+$. In ecological terms, the increase in the fishing effort $\alpha$ causes both fish and coral to become extinct, while the population of algae will converge to its carrying capacity, resulting in an algal bloom. While both of these regimes hence result in the collapse of the coral reef ecosystem, that collapse is due to a ``classical" bifurcation (``B-tipping"), rather than to rate-induced tipping (``R-tipping") \cite{ashwin_2012_tipping}. Hence, we do not consider them further here.

\begin{remark}
  Alternatively, \eqref{PNAS Model} can be reformulated as a $(1,2)$ fast-slow system, with algae the only fast species. Due to $\lambda_0\in[0, 3.2]$, by Table~\ref{param_table}, we can assume that $\lambda=\rho \varepsilon$ for small $\lambda$, where $\rho$ is considered to be $\mathcal{O}(1)$. The resulting system is given by 
    \begin{equation}
        \begin{aligned}
            \frac{\dd H}{\dd t} &=  \varepsilon \rho H(s(H) A - \alpha), \\
    \frac{\dd A}{\dd t} &= A(1-A-C-\lambda H s(H)), \\
    \frac{\dd C}{\dd t} &= \varepsilon C(1-\beta-A-C), 
    \label{eq (1,2) fast slow system}
        \end{aligned}
    \end{equation}
   which admits the critical manifold $\mathcal{M}_0=\mathcal{M}_{0}^0\cup \mathcal{M}_{0}^1$, again restricted to $\mathcal{D}$, where $\mathcal{M}_{0}^0=\{(H,A,C)\in \mathcal{M}_0\,|\, A=0\}$ and $\mathcal{M}_{0}^1=\{(H,A,C)\in \mathcal{M}_0\,|\, A=1-C\}$. Since no coexistence equilibrium exists for the reduced flow on either $\mathcal{M}_0^0$ or $\mathcal{M}_0^1$, with the algae-only equilibrium $e_A=(0,1,0)$ the only attractor for Equation~\eqref{eq (1,2) fast slow system}, rate-induced tipping is not possible in the monostable Equation~\eqref{eq (1,2) fast slow system}. 
\end{remark}

\section{Tipping versus tracking}
\label{section R-tipping}

\subsection{Extended (ramped) system}
To investigate R-tipping as a result of an increase in the fishing effort $\alpha$, we interpret $\alpha$ as a dynamic parameter. Correspondingly, we append an equation for the rate of change of $\alpha$ to Equation~\eqref{slow-system}; see also the set-up in \cite{Plankton-tipping, vanselow_2019_when}.
As in Section \ref{section GPST}, we assume that we are in a regime where \eqref{slow-system} exhibits bistability, with the coexistence equilibrium $e_I$ being ecologically relevant and locally attracting and the algae-only state $e_A$ being an attractor. In other words, we only consider the parameter regime in item 1 of Proposition~\ref{prop: equilibrium stability}.
Correspondingly, we take $\alpha\in(\alpha_{\rm min},\alpha_{\rm max})$, where $\alpha_{\rm min}\equiv d$ and $\alpha_{\rm max}\equiv \min \{\alpha^+, \alpha^\ast\}$. Since we assume an increase in the fishing effort from coexistence here, we require that the state $e_I$ lies on the attracting portion of the critical manifold when $\alpha=d$, which places a further restriction on the permissible ranges for the parameters $\beta$ and $\lambda$ in Figure~\ref{fig: minimum alpha plot}; see Figure~\ref{fig: bifurcation regions} for an illustration.

As we are interested in slowly increasing the fishing effort $\alpha$ while remaining within ecologically relevant regimes, we ``ramp up" $\alpha$ from $\alpha_{\min,\delta} = d+\delta$ to $\alpha_{\max,\delta} = \alpha_{\rm max}-\delta$; here, $\delta=0.01$ is a small, but fixed constant which is introduced to ensure that the bifurcations which occur when $\alpha$ crosses either of $\alpha_{\rm min}$ or $\alpha_{\rm max}$ do not play a role in the dynamics; recall Lemma~\ref{lem: S_0^2 intersections} and Proposition~\ref{prop: equilibrium stability}.

The combination of these restrictions ensures that any regime shift caused by a change in $\alpha$ will not be due to a ``classical" bifurcation. In fact, we will show that rate-induced tipping can occur either due to a folded-node-mediated canard \cite{SZMOLYAN2001419, Canards} or via a regular jump point \cite{fold}. 

We first write the extended (or ``ramped") system resulting from Equation~\eqref{slow-system} in the slow formulation,
\begin{equation}
\begin{split}
    \varepsilon \frac{\dd H}{\dd \tau} &= \lambda H(s(H)A-\alpha), \\
    \varepsilon \frac{\dd A}{\dd \tau} &= A(1-A-C-\lambda Hs(H)), \\
    \frac{\dd C}{\dd \tau} &= C(1-\beta-A-C),\\
    \frac{\dd \alpha}{\dd \tau} &=\begin{cases}
        r & \quad\text{for }\alpha_{\min,\delta}< \alpha < \alpha_{\max,\delta},\\
        0 & \quad \text{otherwise}.
    \end{cases}
\end{split}
\label{eq: full 4D system}
\end{equation}
Equation~\eqref{eq: full 4D system} is a $(2,2)$ fast-slow system, with $(H,A)$ the fast variables and $(C,\alpha)$ the slow ones; in particular, we assume that the fishing effort $\alpha$ is increasing with rate $r$ due to an increased demand for fish.
The dynamics on the corresponding critical manifold, which is now the union of two-dimensional surfaces in the four-dimensional $(H,A,C,\alpha)$-space, are governed by the reduced system that is obtained from \eqref{eq: full 4D system} with $\varepsilon=0$:
\begin{align}
    0 &= \lambda H(s(H)A-\alpha)\label{eq: crit manifold 1}, \\
    0 &= A(1-A-C-\lambda Hs(H))\label{eq: crit manifold 2}, \\
    \frac{\dd C}{\dd \tau} &= C(1-\beta-A-C),\\
    \frac{\dd \alpha}{\dd \tau} &=\begin{cases}
        r & \quad\text{for } \alpha_{\min,\delta}< \alpha < \alpha_{\max,\delta},\\
        0 & \quad \mathrm{otherwise}.
    \end{cases}
\end{align}
Specifically, the ``extended" critical manifold, which we now denote by $S_{0,\alpha}$, can be written as a union
$S_{0,\alpha}=S_{0,\alpha}^0\cup S_{0,\alpha}^1\cup S^2_{0,\alpha}$, with
\begin{align}
    &S_{0,\alpha}^0 = \big\{(H,A,C,\alpha)\in S_{0,\alpha}\,\big|\, H=0,\ A=0,\ \alpha_{{\rm min},\delta}< \alpha < \alpha_{{\rm max},\delta}\big\}, \\ 
    &S_{0,\alpha}^1 =\big\{(H,A,C,\alpha)\in S_{0,\alpha}\,\big|\, H=0,\ A=1-C,\ \alpha_{{\rm min},\delta}< \alpha < \alpha_{{\rm max},\delta}\big\}, \quad\text{and} \\
    &S_{0,\alpha}^2 = \bigg\{(H,A,C,\alpha)\in S_{0,\alpha}\,\bigg|\,  A = \frac{\alpha}{s(H)},\ C=1-\frac{\alpha}{s(H)}-\lambda Hs(H),\ \alpha_{{\rm min},\delta}< \alpha < \alpha_{{\rm max},\delta}\bigg\}\label{eq: S^2_0 def},
\end{align}
where $\alpha$ is now included as a state variable; here, we again restrict to $(H,A,C)\in\mathcal{D}$. 
The critical manifold $S_{0,\alpha}$ for \eqref{eq: full 4D system} is illustrated in Figure~\ref{fig: full critical manifold}. (While the region where $0\leq \alpha< \alpha_{\rm \min,\delta}$ is not considered, it is included in Figure~\ref{fig: full critical manifold}, for completeness.)

The stability properties of $S_{0, \alpha}$ are consistent with those of the critical manifold $S_0$ for \eqref{slow-system}, in that the stability of any point $(H,A,C)\in S_0$ will be identical to that of the point $(H,A,C,\alpha)\in S_{0, \alpha}$. The main difference between the singular geometries in Figures~\ref{fig: S_0-stability} and \ref{fig: full critical manifold} is that the equilibria $e_X$ ($X\in\{I,A\}$) and $e_{nC}^i$ ($i\in\{a,r\}$) and the non-hyperbolic singularities $n_j$ ($j\in\{0, 1,2\}$) on the critical manifold $S_0$ for \eqref{slow-system} extend to curves on $S_{0,\alpha}$ for \eqref{eq: full 4D system}; correspondingly, we henceforth include the subscript $\alpha$ to emphasise their $\alpha$-dependence.
  
\begin{figure}
    \centering
     \includegraphics[scale = 0.2]{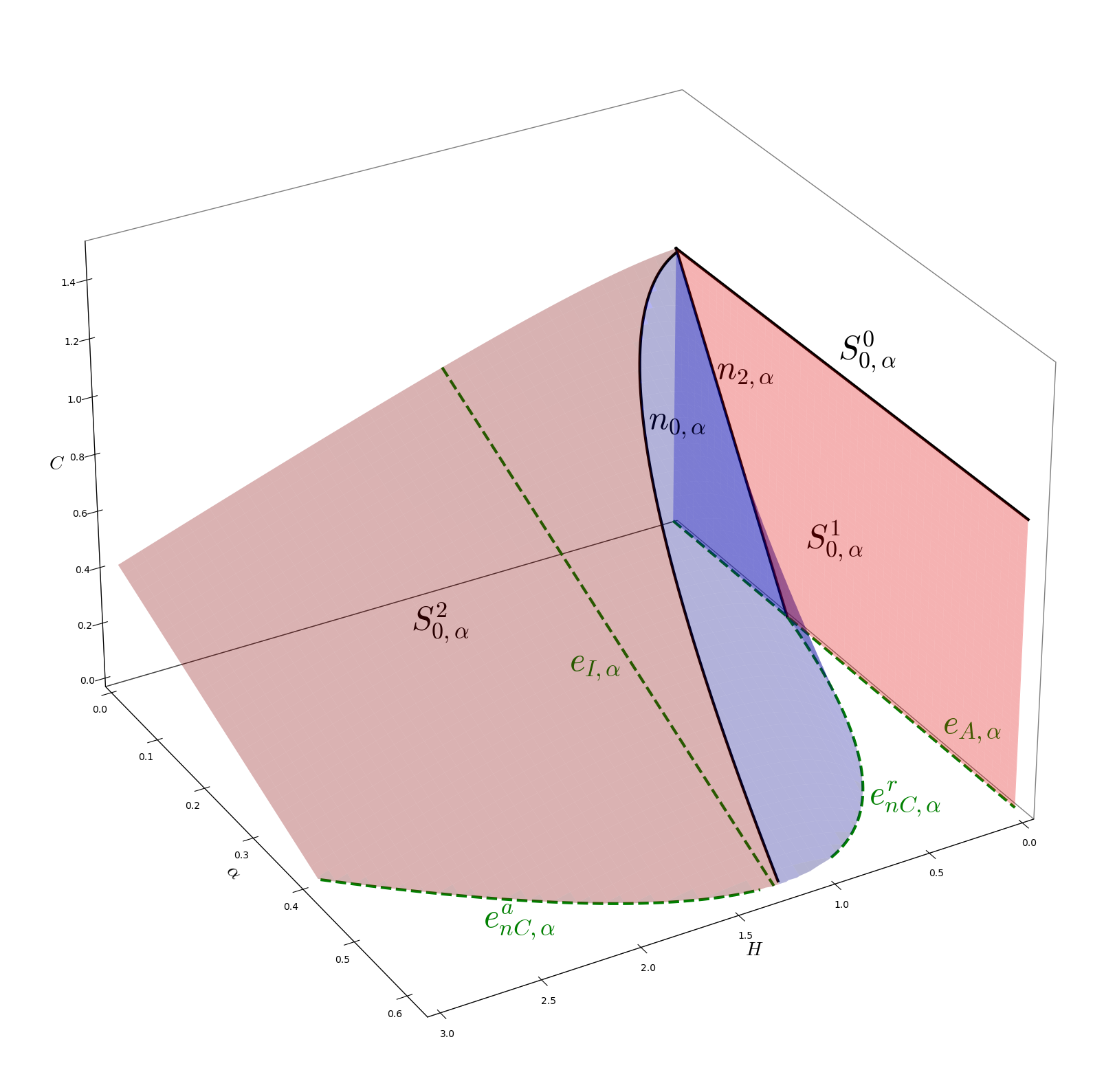}
    \includegraphics[scale = 0.2]{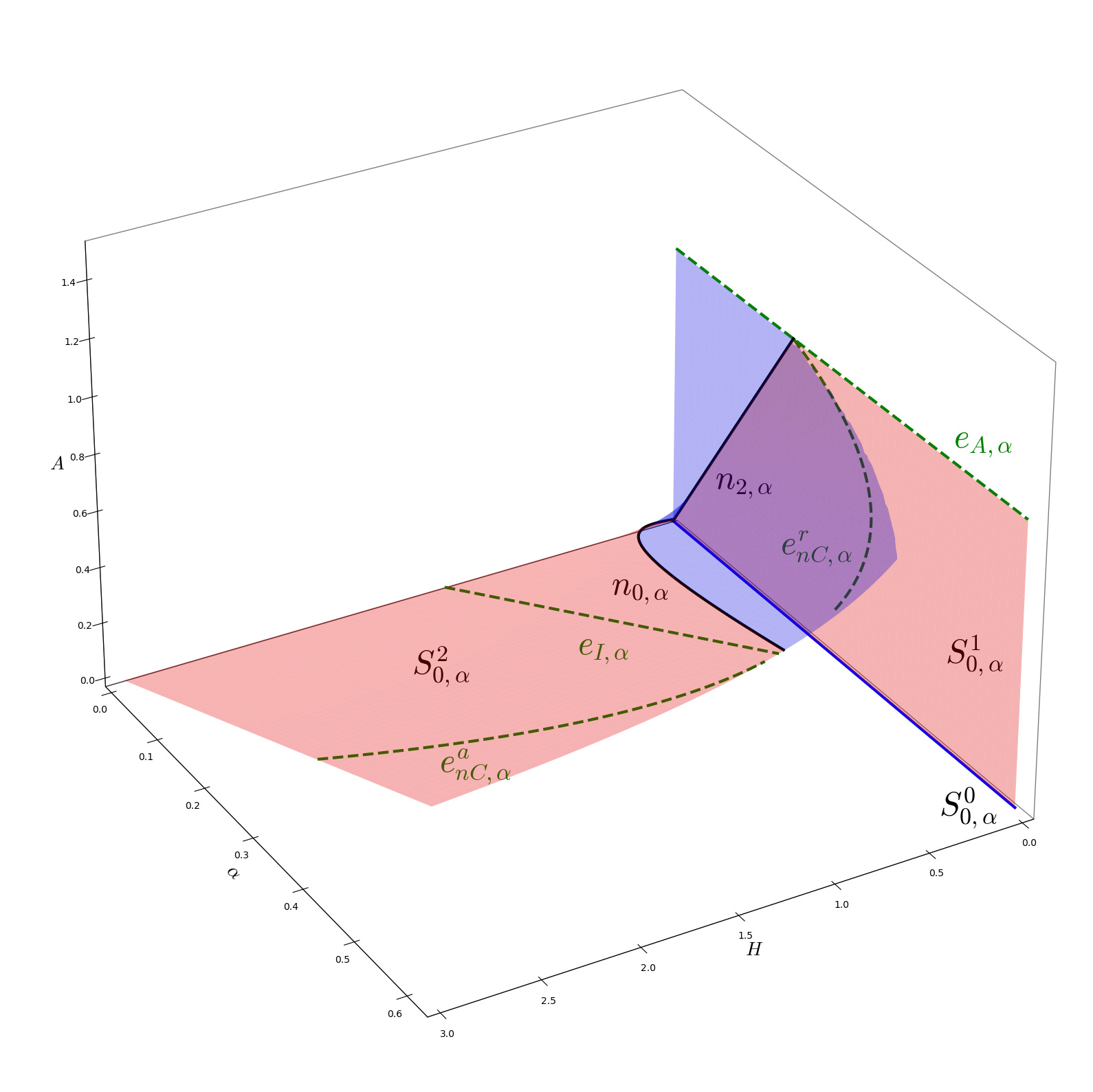}
    \caption{Geometry of the extended critical manifold $S_{0,\alpha}$, with attracting and repelling sections marked in red and blue, respectively, projected into $(H,\alpha,C)$-space (left panel) and $(H,\alpha,A)$-space (right panel). The corresponding parameter values are $\beta=0.2=\lambda$, with $d=0.22$ and $\alpha\in(0, \alpha_{\rm max})$.}
    \label{fig: full critical manifold}
\end{figure}

\begin{remark}
    We define ``tracking" on the basis of the definition given in \cite[Section $2$]{ashwin_2012_tipping}. We will always initiate the flow of \eqref{eq: full 4D system} at $(H, A, C, \alpha)(0)=(H_I, A_I, C_I, \alpha_{{\rm min},\delta})$; then, we say that a trajectory ``tracks" the equilibrium $e_{I, \alpha}=(H_I, A_I, C_I, \alpha)$, which is the extension of the interior equilibrium $e_I$ for \eqref{fast-system} for any fixed $\alpha$, if $(H, A, C, \alpha)(\tau)\to(H_I, A_I, C_I, \alpha_{{\rm max}, \delta})$ as $\tau\to \infty$. Otherwise, we say that ``tipping" has occurred. While that condition hence strengthens the one in \cite{ashwin_2012_tipping}, in that it requires asymptotic, rather than neutral, stability, it seems appropriate here, given that the flow will always tend to the attractor $e_{I,\alpha}$ once $\alpha=\alpha_{{\rm max}, \delta}$ if it has not tipped.
    \label{remark: tipping vs tracking}
\end{remark}

\subsection{Folded singularities}

We are primarily interested in the sub-manifold $S^2_{0,\alpha}$, as it contains the extended coexistence state $e_{I, \alpha}$. The reduced flow on $S_{0,\alpha}^2$ can be derived by differentiation of the algebraic constraints in \eqref{eq: crit manifold 1} and \eqref{eq: crit manifold 2} with respect to the slow time $\tau$, followed by substitution of the expressions for $A$ and $C$ from \eqref{eq: S^2_0 def}, which gives the two-dimensional reduced system
\begin{equation}
    \begin{aligned}
        \frac{\dd H}{\dd \tau} &= -\frac{(1+H)^2}{Q(H,\alpha)}\bigg[\bigg(1-\frac{\alpha}{s(H)}-\lambda H s(H)\bigg)(\lambda H s(H)-\beta)+\frac{r}{s(H)}\bigg]
    \\ 
    &\equiv \frac{\Lambda(H,\alpha)}{Q(H,\alpha)},\\
    \frac{\dd \alpha}{\dd \tau}&=r.
    \label{eq: Ramped reduced problem}
    \end{aligned}
\end{equation}
Note that we now write $Q(H,\alpha)$ instead of $Q(H;\alpha)$ to indicate that $\alpha$ is no longer a parameter, but a dynamic variable in \eqref{eq: full 4D system}. Note also that the right-hand side in the $H$-equation in \eqref{eq: Ramped reduced problem} becomes singular when $Q(H,\alpha)=0$. We will refer to the curve defined by $Q(H,\alpha)=0$ as the fold curve $n_{0,\alpha}$, as illustrated in Figure~\ref{fig: full critical manifold}; specifically, we define
\begin{equation}
    n_{0,\alpha} = \big\{(H,A,C,\alpha)\in S^2_{0,\alpha}\,\big|\, \alpha  = \lambda s(H)^2\big[H+s(H)(1+H)^2\big]\big\}.
    \label{eq: fold curve}
\end{equation}

The singularity at $n_{0, \alpha}$ can be removed by rescaling time $\tau$ as follows: $\frac{\dd}{\dd s}=Q(H, \alpha)\frac{\dd}{\dd\tau}$, which transforms \eqref{eq: Ramped reduced problem} into 
\begin{equation}
    \begin{aligned}
        \frac{\dd H}{\dd s}&=\Lambda(H,\alpha),\\
        \frac{\dd\alpha}{\dd s}&=rQ(H, \alpha).
        \label{eq: Desingularised system}
    \end{aligned}
\end{equation}
Equation~\eqref{eq: Desingularised system}
is referred to as the ``desingularised" system \cite{SZMOLYAN2001419, Canards}. Note that the equilibria of \eqref{eq: Desingularised system} lie on the fold curve $n_{0,\alpha}$. These equilibria are referred to as ``folded singularities" and are of interest, as they potentially allow for canard trajectories to pass through them, depending on the character of the corresponding linearisation of \eqref{eq: Desingularised system}. We summarise the properties of the folded singularities of \eqref{eq: Desingularised system} as follows.
\begin{lem}\label{lem: desingularised system} For $r>0$ sufficiently small, the following statements hold for the folded singularities of \eqref{eq: Desingularised system}.
\begin{enumerate}
    \item For $H>0$, there generically exist two equilibria of \eqref{eq: Desingularised system}, which we write as the folded singularities $p_1=(H_{FS1,r},\alpha_{FS1,r})$ and $p_2=(H_{FS2,r},\alpha_{FS2,r})$.
    \item There holds $\min\{H_{FS1,r},H_{FS2,r}\}\uparrow \min\{\hat{H},H_{I}\}$ for $r\to 0^+$.
    \item There holds $\max\{H_{FS1,r},H_{FS2,r}\}\downarrow \max\{\hat{H},H_{I}\}$ for $r\to 0^+$.
    \item The $H$-value $H_{FS1,r}$ ($H_{FS2,r}$) is a regular perturbation of $\hat{H}$ ($H_{I}$). 
\end{enumerate}
Here, $H_{I}$ is the $H$-coordinate of the coexistence state $e_I$, recall \eqref{eq: coexistence point}, while $\hat{H}$ is the value of $H$ such that $\hat{\alpha}=\Pi(\hat{H})$, as defined in Lemma~\ref{lem: S_0^2 intersections}.
\end{lem}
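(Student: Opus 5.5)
The plan is to reduce the computation of folded singularities to a one-dimensional root-finding problem along the fold curve, and then treat $r$ as a regular perturbation parameter.

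\textbf{Step 1 (reduction).} Equilibria of \eqref{eq: Desingularised system} with $r>0$ require $Q(H,\alpha)=0$ and $\Lambda(H,\alpha)=0$. Since $Q$ is affine in $\alpha$, the first condition is solved on the fold curve \eqref{eq: fold curve} by
\[
\alpha=\alpha_F(H)\equiv\lambda s(H)^2\big[H+s(H)(1+H)^2\big].
\]
Substituting this into $\Lambda=0$ and dividing by $-(1+H)^2\neq 0$ gives the scalar equation
\begin{equation*}
G(H;r)\equiv C_F(H)\,\big(\lambda Hs(H)-\beta\big)+\frac{r}{s(H)}=0,
\qquad
C_F(H)\equiv 1-\frac{\alpha_F(H)}{s(H)}-\lambda Hs(H).
\end{equation*}
Here $C_F(H)$ is the $C$-coordinate of the fold point of $S_0^2$ at the parameter value $\alpha_F(H)$.

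\textbf{Step 2 (the case $r=0$).} At $r=0$ the roots of $G$ are the zeros of the two factors.
\begin{itemize}
\item The factor $\lambda Hs(H)-\beta$ is strictly increasing in $H$ and vanishes exactly at $H=H_I$. This root reproduces $\alpha_F(H_I)=\alpha^\ast$ from Proposition~\ref{prop: equilibrium stability}.
\item The factor $C_F(H)=0$ means that the fold of $S_0^2$ touches $\{C=0\}$. By Lemma~\ref{lem: S_0^2 intersections} and the argument in Proposition~\ref{Prop: alpha equality}, this happens exactly when $C=0$ and $C'=0$ hold simultaneously. That is, $\Pi(H)=\alpha$ with $\Pi'(H)=0$, which gives $H=\hat H$ and $\alpha=\hat\alpha$.
\end{itemize}
Uniqueness of this second root follows from the strict concavity of $\Pi$ already shown in Lemma~\ref{lem: S_0^2 intersections}. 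So $G(\cdot;0)$ has exactly the two positive roots $\hat H$ and $H_I$. Generically $\hat H\neq H_I$, and by Proposition~\ref{Prop: alpha equality} equality occurs only on the codimension-one set where $\alpha^+=\alpha^\ast=\hat\alpha$.

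\textbf{Step 3 (persistence, item 4).} I would show that both roots are simple.
\begin{itemize}
\item At $H_I$: $\partial_H G=C_F(H_I)\,\frac{\dd}{\dd H}(\lambda Hs)\big|_{H_I}$, which is nonzero provided $C_F(H_I)\neq0$, i.e. $\hat H\neq H_I$.
\item At $\hat H$: $\partial_H G=C_F'(\hat H)(\lambda\hat Hs(\hat H)-\beta)$, which needs $C_F'(\hat H)\neq 0$.
\end{itemize}
The main obstacle is this transversality $C_F'(\hat H)\neq0$, since it requires differentiating the composite expression. I would first try to obtain it by implicit differentiation. Let $C(H;\alpha)$ be as in \eqref{eq: C(H)}. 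Along the fold, $\partial_H C=0$, so
\[
C_F'(H)=\partial_\alpha C\cdot\alpha_F'(H)=-\frac{\alpha_F'(H)}{s(H)},
\]
and $\alpha_F'>0$ because $\alpha_F$ is a product of positive increasing functions. Hence $C_F$ is strictly decreasing, which also re-proves uniqueness of $\hat H$.

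With both roots simple, the implicit function theorem gives smooth branches $H_{FS1,r}=\hat H+\mathcal{O}(r)$ and $H_{FS2,r}=H_I+\mathcal{O}(r)$, with $\alpha_{FSi,r}=\alpha_F(H_{FSi,r})$. For $r$ small no other roots appear: on compact $H$-intervals this follows from uniform convergence $G(\cdot;r)\to G(\cdot;0)$. For large $H$ it follows from the sign of $G$, because $C_F\to-\infty$ while $\lambda Hs-\beta$ stays positive. This proves items 1 and 4.

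\textbf{Step 4 (monotonicity, items 2 and 3).} The branches satisfy
\[
\frac{\dd H_{FS}}{\dd r}=-\frac{\partial_r G}{\partial_H G}=-\frac{1/s}{\partial_H G}.
\]
Since $\partial_r G>0$, the direction of motion is governed by the sign of $\partial_H G$ at each root. $G(\cdot;0)$ is a product of a decreasing factor $C_F$ and an increasing factor $\lambda Hs-\beta$, vanishing at $\min\{\hat H,H_I\}$ and $\max\{\hat H,H_I\}$. So $G$ is positive between the two roots and negative outside them. Therefore $\partial_H G>0$ at the smaller root and $\partial_H G<0$ at the larger root.

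Consequently, for increasing $r$ the smaller root moves left and the larger root moves right. Equivalently, as $r\to0^+$ the minimum increases to $\min\{\hat H,H_I\}$ and the maximum decreases to $\max\{\hat H,H_I\}$, which are exactly items 2 and 3.
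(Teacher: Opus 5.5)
Your proposal is correct and follows essentially the same route as the paper's proof in Appendix~\ref{sec: desingularised system proof}. Your $C_F(H)$ is exactly the paper's $u(H)=1-2\lambda Hs(H)-\lambda s(H)^2(1+H)^2$, so your scalar equation is the paper's $F(H,r)=u(H)v(H)+r/s(H)=0$; both arguments identify the $r=0$ roots as $\hat H$ and $H_I$, apply the Implicit Function Theorem at these simple roots, and read the direction of motion of each root off the sign of $\partial_H F$ there. Your derivation of $C_F'=-\alpha_F'/s<0$ from $\partial_H C=0$ on the fold (instead of differentiating $u$ directly and using $\Pi'(H)=u(H)/(1+H)^2$), together with your check that no additional roots appear for small $r>0$, are minor refinements rather than a different approach.
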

\begin{proof}
See Appendix~\ref{sec: desingularised system proof}.
\end{proof}

The following result relates the regions in Figure~\ref{fig: minimum alpha plot} to the ordering between $H_I$ and $\hat{H}$. 
\begin{prop} There holds $H_I>\hat{H}$ ($H_I<\hat{H}$) if and only if $\alpha^+<\hat{\alpha}<\alpha^{\ast}$ ($\alpha^{\ast}<\alpha^+<\hat{\alpha}$).
\label{prop: ordering}
\end{prop}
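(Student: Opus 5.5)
Since Proposition~\ref{prop: possible bifurcations} and the subsequent discussion exclude the regime $\alpha^+<\alpha^\ast<\hat\alpha$, and Proposition~\ref{Prop: alpha equality} shows that pairwise equality forces $\alpha^+=\alpha^\ast=\hat\alpha$, exactly one of the two strict orderings holds in the generic case. It therefore suffices to prove the two forward implications,
\[
H_I<\hat H \;\Rightarrow\; \alpha^\ast<\alpha^+
\qquad\text{and}\qquad
H_I>\hat H \;\Rightarrow\; \alpha^+<\alpha^\ast .
\]
The converses then follow by exclusion: if $H_I=\hat H$ we obtain the triple equality, and that boundary case is excluded from both sides. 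In each implication, the comparison of $\hat\alpha$ with $\alpha^+$ is already handled by Corollary~\ref{cor: hat bound}, and strictness of $\alpha^+<\hat\alpha$ follows from Proposition~\ref{Prop: alpha equality} once $\alpha^+\neq\alpha^\ast$ is known. The work therefore reduces to comparing $\alpha^\ast$ with $\alpha^+$ in terms of the sign of $H_I-\hat H$.

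The key observation is that both quantities are evaluations at $H=H_I$ of functions tied to $\Pi$. By Corollary~\ref{cor: hat bound}, $\alpha^+=\Pi(H_I)$, using $\lambda H_I s(H_I)=\beta$. By \eqref{alpha star of H_I}, $\alpha^\ast=G(H_I)$, where
\[
G(H)=\lambda s(H)^2\big[H+s(H)(1+H)^2\big]
\]
is the function whose level set defines the fold curve, i.e.\ $Q(H;\alpha)=0 \iff \alpha=G(H)$. I will then relate $\Pi'$ to $G-\Pi$. Differentiating $\Pi(H)=s-\lambda Hs^2$ with $s'=(1+H)^{-2}$, I expect an identity of the form
\[
\Pi'(H)=\frac{\Pi(H)-G(H)}{s(H)(1+H)^2}.
\]
This is just the statement, used in the proof of Proposition~\ref{Prop: alpha equality}, that $C'(H)=0$ on $\{C=0\}$ exactly when $Q=0$. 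Explicitly, with $\alpha=\Pi(H)$ one has $C(H)=0$ and $C'(H)=-Q(H;\Pi(H))/(1+H)^2$. I will verify this identity by direct computation.

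Given the identity, $\operatorname{sign}(\alpha^+-\alpha^\ast)=\operatorname{sign}\Pi'(H_I)$. By the concavity of $\Pi$ established in the proof of Lemma~\ref{lem: S_0^2 intersections}, $\Pi'$ is strictly decreasing with unique zero $\hat H$. Hence $\Pi'(H_I)>0$ if and only if $H_I<\hat H$, which gives $\alpha^\ast<\alpha^+$, and $\Pi'(H_I)<0$ if and only if $H_I>\hat H$, which gives $\alpha^+<\alpha^\ast$. Combining this with Corollary~\ref{cor: hat bound} yields the two full orderings, and here the equivalences come out directly, without needing the exclusion argument.

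The main obstacle is the algebraic identity linking $\Pi'$ and $G-\Pi$. If the clean form fails, I will fall back on the geometric version: for fixed $\alpha=\alpha^+$, the curve $C(H)$ vanishes at $H_I$, and its slope there is $-Q(H_I;\alpha^+)/(1+H_I)^2$. Since $Q$ is increasing in $H$ (Lemma~\ref{lemma: Stability of Critical Manifold}) and decreasing in $\alpha$, the sign of $Q(H_I;\alpha^+)$ equals the sign of $\alpha^\ast-\alpha^+$, because $Q(H_I;\alpha^\ast)=0$. Moreover, $H_I$ is the smaller or larger root of $\Pi=\alpha^+$ according as $H_I<\hat H$ or $H_I>\hat H$, which fixes the sign of $C'(H_I)$.
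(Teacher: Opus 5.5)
Your proof is correct, but its key comparison differs from the paper's. The paper compares $\alpha^\ast$ with $\hat\alpha$. The fold curve is the graph of the strictly increasing function $G(H)=\lambda s(H)^2[H+s(H)(1+H)^2]$, with $\alpha^\ast=G(H_I)$ and, implicitly, $\hat\alpha=G(\hat H)$. Hence $H_I>\hat H$ iff $\alpha^\ast>\hat\alpha$. The paper then uses Corollary~\ref{cor: hat bound} for $\alpha^+\le\hat\alpha$, and a geometric argument (the fold curve enters $\{C<0\}$ only for $\alpha>\hat\alpha$) to rule out $\alpha^+<\alpha^\ast<\hat\alpha$.

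You instead compare $\alpha^+$ with $\alpha^\ast$ directly, and your identity holds exactly. Writing $u(H)=1-2\lambda Hs(H)-\lambda s(H)^2(1+H)^2$, one has $\Pi-G=s\,u$ and $\Pi'=u/(1+H)^2$, so $\alpha^+-\alpha^\ast=s(H_I)\,u(H_I)$. Its sign is that of $\hat H-H_I$, because $u$ is strictly decreasing with unique zero $\hat H$. You need only $u'<0$ from Appendix~\ref{sec: desingularised system proof}, not concavity of $\Pi$. This route explains at once why $\alpha^\ast=\alpha^+$ coincides with $H_I=\hat H$ on $\mathcal{C}$. It is also the analytic content of the paper's geometric exclusion step, since $C$ restricted to the fold curve equals $u(H)$.

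One overstatement should be fixed. For $H_I>\hat H$, the facts $\alpha^+<\alpha^\ast$ and $\alpha^+<\hat\alpha$ do not place $\hat\alpha$ below $\alpha^\ast$. So the full orderings come out ``without the exclusion argument'' only when $H_I<\hat H$. In the other case you still rely on excluding $\alpha^+<\alpha^\ast<\hat\alpha$ via Proposition~\ref{prop: possible bifurcations}, whose proof is delicate.

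You can drop that dependence using your own identity. At $H=\hat H$ it gives $\hat\alpha=\Pi(\hat H)=G(\hat H)$, and $G$ is strictly increasing, so $\alpha^\ast>\hat\alpha$ iff $H_I>\hat H$. Combining the two comparisons gives a self-contained proof that also rederives the impossibility of the middle regime.
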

\begin{proof}
The fold curve $n_{0,\alpha}$, which is a one-dimensional curve in the $4$-dimensional $(H,A,C,\alpha)$-space, may be parametrised by the single variable $H$. By Equation~\eqref{eq: fold curve}, we may write
 \begin{equation}
 \begin{split}
     n_{0,\alpha}=&\biggl\{(H,A(H,\alpha(H)),C(H,\alpha(H)),\alpha(H))\in S_{0,\alpha}^{2}\,\bigg|\,  A(H,\alpha(H))=\frac{\alpha(H)}{s(H)}, \\ & C(H,\alpha(H))=1-\frac{\alpha(H)}{s(H)}-\lambda Hs(H),\ \alpha(H)=\lambda s(H)^2(H+s(H)(1+H)^2\biggr\},
     \label{eq: fold curve param}
\end{split}
 \end{equation}
with $\alpha(H)$ as given in Equation~\eqref{eq: fold curve}. In particular, since $\alpha'(H)>0$, the function $\alpha(H)$ is monotone. Also, recall that $H_I$ is independent of $\alpha$; see Equation~\eqref{eq: coexistence point}. Hence, the question of whether $H_I>\hat{H}$ or $H_I<\hat{H}$ is equivalent to considering whether the coexistence state $e_{I,\alpha}$ has crossed $n_{0,\alpha}$ before the saddle-node bifurcation occurs at $\alpha=\hat{\alpha}$. Specifically, $H_I>\hat{H}$ if and only if $e_{I,\alpha}$ has not crossed $n_{0,\alpha}$ when $\alpha=\hat{\alpha}$, which forces $\hat\alpha<\alpha^\ast$. Furthermore, by Corollary~\ref{cor: hat bound}, $\alpha^+<\hat\alpha$, and we may conclude that $H_I>\hat{H}$ if and only if $\alpha^+<\hat{\alpha}<\alpha^{\ast}$.

Similarly, if $H_I<\hat{H}$, then the coexistence state $e_{I,\alpha}$ must have crossed $n_{0,\alpha}$ before $\hat\alpha$, which is equivalent to $\alpha^{\ast}<\hat\alpha$. Furthermore, the case where $\alpha^+<\alpha^{\ast}<\hat\alpha$ is excluded, as it would then follow that $e_{I,\alpha}$ crosses $n_{0,\alpha}$ in a point with $C<0$; however, the fold curve $n_{0,\alpha}$ only lies in $\{C<0\}$ when $\alpha>\hat\alpha$. In particular, $H_I<\hat{H}$ if and only if $\alpha^{\ast}<\alpha^+<\hat\alpha$. 
\end{proof}
We note that the regions where $H_I<\hat{H}$ and $\hat{H}<H_I$, respectively, in the $(\beta,\lambda)$-plane are, by necessity, identical to those where $\alpha^+<\hat{\alpha}<\alpha^{\ast}$ (yellow) and $\alpha^{\ast}<\alpha^+<\hat{\alpha}$ (purple) in Figure~\ref{fig: minimum alpha plot}, Furthermore $H_I=\hat{H}$ along the same bifurcation curve $\mathcal{C}$ as is given in Equation~\eqref{eq: bifurcation curve}.

With the existence of these folded singularities guaranteed and their relationship to the relevant $\alpha$-regimes established, we proceed to study their stability, which determines the potential for canard dynamics to occur in \eqref{eq: full 4D system}.

\begin{thm} Let $r>0$ be sufficiently small, and suppose that $\alpha^{+}<\hat\alpha<\alpha^\ast$, respectively that $\alpha^\ast<\alpha^{+}<\hat\alpha$. Then, the following statements hold for the two equilibria of \eqref{eq: Desingularised system}, which correspond to the folded singularities $p_1=(H_{FS1,r},\alpha_{FS1,r})$ and $p_2=(H_{FS2,r},\alpha_{FS2,r})$.
\begin{enumerate}
    \item The folded singularity $p_2$, respectively $p_1$, lies outside the ecologically relevant regime.
    \item The folded singularity $p_1$, respectively $p_2$, is attracting, and either a folded node or a folded focus. 
\end{enumerate} 
\label{thm: folded singularities}
\end{thm}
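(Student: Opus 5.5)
The plan is to locate the two equilibria of the desingularised system \eqref{eq: Desingularised system} perturbatively in $r$, using Lemma~\ref{lem: desingularised system}, and then classify them from the Jacobian
\[
J=\begin{pmatrix}\Lambda_H & \Lambda_\alpha\\ rQ_H & rQ_\alpha\end{pmatrix},
\qquad \operatorname{tr}J=\Lambda_H+rQ_\alpha,\qquad \det J=r\big(\Lambda_H Q_\alpha-\Lambda_\alpha Q_H\big),
\]
evaluated at each folded singularity. Two partial derivatives are explicit and have fixed sign: $Q_\alpha=-1/s(H)^2<0$ and $Q_H>0$, the latter by item 1 of Lemma~\ref{lemma: Stability of Critical Manifold}. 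Writing $C(H,\alpha)=1-\alpha/s(H)-\lambda Hs(H)$, we also have $\Lambda_\alpha=(1+H)^2(\lambda Hs(H)-\beta)/s(H)$. At $r=0$ the equilibria satisfy $Q=0$ and $C(H,\alpha)(\lambda Hs-\beta)=0$. The branch $C=0$ on the fold is exactly the saddle-node point $(\hat H,\hat\alpha)$ (by Lemma~\ref{lem: S_0^2 intersections} and the proof of Proposition~\ref{Prop: alpha equality}), and this gives $p_1$. The branch $\lambda Hs=\beta$ is $(H_I,\alpha^\ast)$, and this gives $p_2$.

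For item 1, ecological relevance is decided by the sign of the $C$-coordinate. For $r>0$, $\Lambda=0$ gives
\[
C=-\frac{r}{s(H)\,(\lambda Hs(H)-\beta)}
\]
at any folded singularity with $\lambda Hs\neq\beta$. Consider first $\alpha^+<\hat\alpha<\alpha^\ast$. Then $H_I>\hat H$ by Proposition~\ref{prop: ordering}. Near $H_I$, the point $p_2$ has $\alpha\approx\alpha^\ast>\hat\alpha$, and there the fold curve lies in $\{C<0\}$ (proof of Proposition~\ref{prop: ordering}), so $p_2$ is irrelevant. By contrast, $p_1$ sits near $\hat H<H_I$, where $\lambda Hs-\beta<0$, so $C=\mathcal O(r)>0$. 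Now consider $\alpha^\ast<\alpha^+<\hat\alpha$. Then $\hat H>H_I$, so near $\hat H$ we have $\lambda Hs-\beta>0$ and $p_1$ has $C<0$. Meanwhile $p_2$ has $C\approx C_I(\alpha^\ast)>0$ since $\alpha^\ast<\alpha^+$.

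For item 2 in the case of $p_2$, use that $\lambda Hs-\beta=\mathcal O(r)$ and $C=\mathcal O(1)>0$. Then $\Lambda_H\approx-(1+H_I)^2C_I\,(\lambda Hs)'(H_I)<0$ at $\mathcal O(1)$, while $\Lambda_\alpha=\mathcal O(r)$. Hence $\operatorname{tr}J\approx\Lambda_H<0$ and $\det J\approx r\Lambda_HQ_\alpha>0$ at $\mathcal O(r)$. The discriminant is $\Lambda_H^2+\mathcal O(r)>0$, so $p_2$ is an attracting (folded) node. For $p_1$ the situation is degenerate, because $C'(\hat H)=0$ and $C=\mathcal O(r)$, so $\Lambda_H=\mathcal O(r)$. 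On the other hand, $-\Lambda_\alpha Q_H>0$ is $\mathcal O(1)$, so $\det J>0$ at order $r$ and $p_1$ is a node or focus, not a saddle. The discriminant is $\mathcal O(r^2)-\mathcal O(r)$, which points to a focus for very small $r$ and a node otherwise.

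The main obstacle is fixing the sign of $\operatorname{tr}J$ at $p_1$. First, I would expand $H_{FS1,r}=\hat H+rh_1+\dots$ using $Q=0$ and $\Lambda=0$, which fixes $h_1$ from $C(H,\alpha(H))\approx\tfrac12C''(\hat H)(H-\hat H)^2$ along the fold. Then I would compute
\[
\Lambda_H=-(1+H)^2\big[C_H(\lambda Hs-\beta)+C(\lambda Hs)'\big]+\mathcal O(r)
\]
to order $r$, using $C_H=-Q/(1+H)^2=0$ on the fold, and combine it with $rQ_\alpha=-r/s^2$. Finally, the notion of "attracting" must be interpreted for the original reduced flow. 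Since the time rescaling $\mathrm{d}s=Q\,\mathrm{d}\tau$ reverses direction on $S^{2,r}_{0,\alpha}$, I would check that the stable eigendirections lie in the correct sheet, so that trajectories on $S^{2,a}_{0,\alpha}$ are funnelled into the singularity.
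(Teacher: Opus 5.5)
\textbf{Gap in item 2 for $p_1$.} Your overall route matches the paper's: trace and determinant of the Jacobian of \eqref{eq: Desingularised system}, evaluated at the perturbed roots from Lemma~\ref{lem: desingularised system}. Item~1 is correct, and so is your treatment of $p_2$. Your formula $C=-r/\big(s(H)(\lambda Hs(H)-\beta)\big)$ at the folded singularities is actually cleaner than the paper's argument for ecological relevance.

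The gap is item~2 for $p_1$ in the regime $\alpha^+<\hat\alpha<\alpha^\ast$. You show $\det J>0$, but that only excludes a saddle. The sign of $\mathrm{tr}J$, which is exactly what ``attracting'' means here, is left open.

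Moreover, the route you sketch for that sign would fail. Along the fold, $C(H,\alpha(H))$ does not vanish quadratically at $\hat H$. Since $C_H=-Q/(1+H)^2=0$ on $n_{0,\alpha}$, you get $\frac{\dd}{\dd H}C(H,\alpha(H))=C_\alpha\,\alpha'(H)=-\alpha'(H)/s(H)<0$. In fact $C(H,\alpha(H))=u(H)=1-2\lambda Hs(H)-\lambda s(H)^2(1+H)^2$, which has a simple zero at $\hat H$. The double zero belongs to $C(\cdot\,;\hat\alpha)$ at fixed $\alpha$, not to the fold. Your quadratic ansatz, with $C''<0$, would give $C\le 0$ on the fold near $\hat H$. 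That contradicts $C=\mathcal{O}(r)>0$ at $p_1$, which you established yourself in item~1.

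\textbf{The missing observation.} No expansion of $H_{FS1,r}$ is needed, because the $r$-terms in the trace cancel exactly. At an equilibrium, the term from differentiating $(1+H)^2$ in $\Lambda$ is multiplied by the vanishing bracket, so it drops out. Also $\partial_H(r/s)=-r/\big(s^2(1+H)^2\big)$. Hence
\[
\Lambda_H=-(1+H)^2\big[C_H(\lambda Hs-\beta)+C\,(\lambda Hs)'\big]+\frac{r}{s^2},
\qquad
\mathrm{tr}J=\Lambda_H+rQ_\alpha=-(1+H)^2\,C\,(\lambda Hs)'(H),
\]
where the second identity uses $rQ_\alpha=-r/s^2$ and $C_H=0$ on the fold. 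This is the paper's formula $\mathrm{tr}J=-\lambda\big[H+s(1+H)^2\big]u(H)$.

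At $p_1$ you already know $C>0$, so $\mathrm{tr}J<0$ (of order $r$). Combined with your $\det J>0$, this gives an attracting node or focus. Equivalently, the paper observes that $H_{FS1,r}<\hat H$, so $u>0$. The same identity also gives $\mathrm{tr}J<0$ at $p_2$ exactly, without approximation.

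Your proposed extra check, that the stable eigendirections lie on the correct sheet under the time reversal, is not needed for the statement as posed. The paper takes ``attracting'' to mean a stable equilibrium of the desingularised system, i.e.\ $\mathrm{tr}J<0$ and $\det J>0$.
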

\begin{remark}
We note that $p_i$ ($i=1,2$) can remain in the positive quadrant in the $(H,\alpha)$-plane and still be ecologically irrelevant if the corresponding $C$-coordinate is negative.
\end{remark}

\begin{proof}
 To begin, we assume that $\alpha^{+}<\hat\alpha<\alpha^\ast$, which in turn implies that $\hat{H}<H_I$, as the coexistence state $e_I$ has not crossed the fold curve $n_{0,\alpha}$. From Lemma~\ref{lem: desingularised system}, we may conclude that we have the ordering
 \begin{align*}
     H_{FS1,r}<\hat{H}<H_I<H_{FS2,r}
 \end{align*}
for $r>0$ sufficiently small.
Furthermore, we recall that the fold curve $n_{0,\alpha}$ may be parametrised by the single variable $H$, with a parametrisation of the form $(H,A(H,\alpha(H)),C(H,\alpha(H)),\alpha(H))$, as given in \eqref{eq: fold curve param}.
 
In particular, we note that $n_{0,\alpha}$ only lies in the positive orthant for the four-dimensional system, Equation~\eqref{eq: full 4D system}, provided $C>0$ which, by Lemma~\ref{lem: S_0^2 intersections}, requires $\alpha(H)<\hat{\alpha}$. Furthermore, the unique value of $H$ for which $C(H,\alpha(H))=0$ when $\alpha(H)=\hat\alpha$ is $\hat{H}$. Also, one may directly show that $\frac{\partial C}{\partial \alpha}<0$ and $\frac{\partial \alpha(H)}{\partial H}>0$. In particular, one may conclude that for $H>\hat{H}$, $C(H,\alpha(H))<0$; since $H_{FS2,r}>\hat{H}$ and $S_{\varepsilon,\alpha}^2$ are regular perturbations of $H_I$ and $S_{0,\alpha}^2$, respectively, for $r$ and $\varepsilon$ positive and sufficiently small, it follows that the corresponding point in $(H,A,C,\alpha)$-space lies outside the positive orthant. Hence, $p_2$ is not ecologically relevant in that case.

For stability, we evaluate the Jacobian $J$ of \eqref{eq: Desingularised system} in $(H,\alpha)$ at the relevant folded singularity $p_1$,
\begin{align*}
    J = \begin{pmatrix}
        \frac{r}{s(H)^2}-\lambda \big[H+s(H)(1+H)^2\big]\big[1-2\lambda Hs(H)-\lambda s(H)^2(1+H)^2\big] && \frac{(\lambda H s(H)-\beta)(1+H)^2}{s(H)} \\
        2\lambda r\Big[2+(1+H)s(H)+\frac{H}{s(H)(1+H)^2}\Big] && -\frac{r}{s(H)^2}
        \label{Jacobian desingularised}
    \end{pmatrix},
\end{align*}
where we have used the fact that $Q(H,\alpha)=0=\Lambda(H,\alpha)$ at $p_1$ to simplify. The eigenvalues of $J$ are given by
\begin{align}
    \lambda_{\pm} = \frac{\mathrm{tr}J\pm\sqrt{(\mathrm{tr}J)^2-4\det{J}}}{2},
\end{align}
with
\begin{align*}
    \mathrm{tr}J = -\lambda \big[H+s(H)(1+H)^2\big]\big[1-2\lambda Hs(H)-\lambda s(H)^2(1+H)^2\big].
\end{align*}
Recall from the proof of Lemma~\ref{lem: desingularised system} in Appendix~\ref{sec: desingularised system proof} that $u(H)=1-2\lambda H s(H)-\lambda s(H)^2(1+H)^2$ is monotonically decreasing in $H$, while $v(H)=\lambda H s(H)-\beta$ is monotonically increasing, with $u(\hat{H})=0=v(H_I)$, respectively. By the ordering of the roots, with $H_{FS1,r}<\hat{H}<H_I$, we have
\begin{align*}
1-2\lambda H_{FS1,r}s(H_{FS1,r})-\lambda s(H_{FS1,r})^2(1+H_{FS1,r})^2>0\quad\text{and}\quad
\lambda H_{FS1,r}s(H_{FS1,r})-\beta<0.
\end{align*}

Since $H+s(H)(1+H^2)>0$ regardless, it follows that $\mathrm{tr}J<0$ when evaluated at $H=H_{FS1,r}$. To show stability, we must rule out the case of $p_1$ being a saddle, which follows if we can show that $\det{J}>0$. We have
\begin{align*}
    \det{J} =& r\bigg\{-\frac{\mathrm{tr}J}{s(H)^2}-2\lambda\bigg[2+(1+H)s(H)+\frac{H}{s(H)(1+H)^2}\bigg]\frac{(\lambda H s(H)-\beta)(1+H)^2}{s(H)}\bigg\}-\frac{r^2}{s(H)^4} 
\end{align*}
In particular, due to $\mathrm{tr}J$ and $\lambda Hs(H)-\beta$ both being negative at $H=H_{FS1,r}$ and $s(H_{FS1,r})\in[d,d+1]$, $\det J>0$ for $r$ sufficiently small; hence, the folded singularity $p_1$ is attracting, and either a node or focus. 

The case where $\alpha^\ast<\alpha^{+}<\hat\alpha$ follows analogously, with the exception that the ordering of the roots now is 
\begin{align*}
     H_{FS2,r}<H_I<\hat{H}<H_{FS1,r}.
\end{align*}
\end{proof}

Theorem~\ref{thm: folded singularities} establishes that in either of the above two $\alpha$-regimes, there is only one relevant folded singularity, a folded node or a folded focus, which we will denote by $p_{fn}$ and $p_{ff}$, respectively. The following result refines the statement of Theorem~\ref{thm: folded singularities}, allowing us to distinguish between these two scenarios.
\begin{prop} Let $r>0$ be sufficiently small, and suppose that $\alpha^{+}<\hat\alpha<\alpha^\ast$, respectively that $\alpha^\ast<\alpha^+<\hat\alpha$. Then, the folded singularity $p_1$, respectively, $p_2$, is a folded focus, respectively a folded node.
\label{prop: folded node/focus}
\end{prop}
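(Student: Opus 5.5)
The plan is to reuse the Jacobian $J$ of the desingularised system \eqref{eq: Desingularised system} from the proof of Theorem~\ref{thm: folded singularities}, and to decide between node and focus from the sign of the discriminant $(\mathrm{tr}J)^2-4\det J$ at the relevant folded singularity, expanded in $r$ as $r\to0^+$. The formulas for $\mathrm{tr}J$ and $\det J$ used there rely only on $Q=\Lambda=0$, so they hold at both $p_1$ and $p_2$. As in that proof, write $u(H)=1-2\lambda Hs(H)-\lambda s(H)^2(1+H)^2$ and $v(H)=\lambda Hs(H)-\beta$, so that $u(\hat H)=0=v(H_I)$, with $u$ decreasing and $v$ increasing. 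Then
\begin{align*}
\mathrm{tr}J=-\lambda\big[H+s(H)(1+H)^2\big]u(H),
\end{align*}
and
\begin{align*}
\det J=r\Big\{-\frac{\mathrm{tr}J}{s(H)^2}-2\lambda\Big[2+(1+H)s(H)+\frac{H}{s(H)(1+H)^2}\Big]\frac{v(H)(1+H)^2}{s(H)}\Big\}-\frac{r^2}{s(H)^4}.
\end{align*}
The first preliminary step is to upgrade item~4 of Lemma~\ref{lem: desingularised system} to the quantitative statements $H_{FS1,r}=\hat H+\mathcal{O}(r)$ and $H_{FS2,r}=H_I+\mathcal{O}(r)$. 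I would obtain these from the implicit function theorem applied to $\Lambda=0=Q$ in the proof in Appendix~\ref{sec: desingularised system proof}; there $r$ enters $\Lambda$ only through the smooth term $r/s(H)$, and the unperturbed roots $\hat H$ and $H_I$ are simple.

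\textbf{Case $\alpha^+<\hat\alpha<\alpha^\ast$ (relevant point $p_1$, expected folded focus).} Here $H_{FS1,r}\to\hat H$ and $u(\hat H)=0$. Hence $u(H_{FS1,r})=\mathcal{O}(r)$, and therefore $\mathrm{tr}J=\mathcal{O}(r)$. Since $\hat H<H_I$ by Proposition~\ref{prop: ordering}, we have $v(H_{FS1,r})\to v(\hat H)<0$. The bracketed term in $\det J$ then tends to the strictly positive constant
\begin{align*}
K=-2\lambda\Big[2+(1+\hat H)s(\hat H)+\frac{\hat H}{s(\hat H)(1+\hat H)^2}\Big]\frac{v(\hat H)(1+\hat H)^2}{s(\hat H)}>0,
\end{align*}
so that $\det J=rK+\mathcal{O}(r^2)$. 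It follows that $(\mathrm{tr}J)^2-4\det J=-4rK+\mathcal{O}(r^2)<0$ for $r$ small. The eigenvalues are therefore complex, and since $\mathrm{tr}J<0$ (Theorem~\ref{thm: folded singularities}) $p_1$ is an attracting folded focus.

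\textbf{Case $\alpha^\ast<\alpha^+<\hat\alpha$ (relevant point $p_2$, expected folded node).} Here $H_{FS2,r}\to H_I<\hat H$, so $u(H_I)>0$ and $\mathrm{tr}J\to-\lambda[H_I+s(H_I)(1+H_I)^2]u(H_I)=:T_0<0$, which is $\mathcal{O}(1)$ and bounded away from zero. Because $v(H_I)=0$, we have $v(H_{FS2,r})=\mathcal{O}(r)$, so the $v$-term in $\det J$ contributes only $\mathcal{O}(r^2)$. This gives $\det J=-rT_0/s(H_I)^2+\mathcal{O}(r^2)>0$. The discriminant equals $T_0^2+\mathcal{O}(r)>0$, so both eigenvalues are real and negative, namely $\lambda_-\approx T_0$ and $\lambda_+=\mathcal{O}(r)$. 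Hence $p_2$ is a (stable) folded node, with eigenvalue ratio $\mathcal{O}(r)$.

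The main obstacle is the first step: establishing the $\mathcal{O}(r)$ distance of the folded singularities from $\hat H$ and $H_I$. This requires checking nondegeneracy of the unperturbed roots, i.e.\ that the Jacobian of $(\Lambda,Q)$ with respect to $(H,\alpha)$ is invertible at $r=0$. I expect this to follow from $u'(\hat H)\neq0$, $v'(H_I)\neq0$ and $\partial_\alpha Q=-1/s^2\neq0$. The remaining steps are direct sign bookkeeping using Proposition~\ref{prop: ordering}.
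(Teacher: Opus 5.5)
Your proposal is correct and follows essentially the same route as the paper. You decide node versus focus from the sign of the discriminant $(\mathrm{tr}J)^2-4\det J$ at the relevant folded singularity: for $p_1$ it vanishes at $r=0$ and has $r$-derivative $-4K<0$, which is exactly the paper's $\partial_r\Delta|_{r=0}$, and for $p_2$ it equals $(\mathrm{tr}J)^2>0$ at $r=0$. Your $\mathcal{O}(r)$ control of $H_{FS1,r}-\hat H$ and $H_{FS2,r}-H_I$ is already supplied by the implicit function theorem argument in Appendix~\ref{sec: desingularised system proof}, where simplicity of the roots needs $\hat H\neq H_I$, i.e.\ being off $\mathcal{C}$. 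Making it explicit usefully justifies fixing $H=\hat H$ when differentiating in $r$, a step the paper passes over.
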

\begin{proof}
As shown in Theorem~\ref{thm: folded singularities}, the folded singularities $p_1$ and $p_2$, respectively, are attracting in either regime; hence, to determine whether they are foci or nodes, we consider the discriminant
\begin{equation} 
\begin{split}
    \Delta(r)\equiv(\mathrm{tr}J)^2-4\det J =& \bigg\{\frac{2r}{s(H)^2}-\lambda\big[H+s(H)(1+H)^2\big]\big[1-2\lambda H s(H)-\lambda s(H)^2(1+H)^2\big]\bigg\}^2 \\
    &+8\lambda r \frac{(\lambda Hs(H)-\beta)(1+H)^2}{s(H)}\bigg[2+(1+H)s(H)+\frac{H}{s(H)(1+H)^2}\bigg].
    \label{eq: Discriminant}
\end{split}
\end{equation}
If $\alpha^{+}<\hat\alpha<\alpha^\ast$, we note that $\mathrm{tr}J$ is zero when evaluated at $\hat{H}$ and that $p_1=(H_{FS1,r},\alpha_{FS1,r})$ is the relevant singularity, where $H_{FS1,r}<\hat{H}$, with $\frac{\partial H_{FS1,r}}{\partial r}<0$; see again the proof of Lemma~\ref{lem: desingularised system} in Appendix~\ref{sec: desingularised system proof}. In particular, as $\mathrm{tr}J=0$ when $r=0$, it also follows that $\Delta(0)=0$ and, hence, that the sign of the discriminant for sufficiently small $r$ is determined by the derivative. Calculating the derivative of $\Delta(r)$ with respect to $r$ and evaluating at $\hat{H}$ for $r=0$, we have
\begin{equation}
    \frac{\partial \Delta}{\partial r}\Big\rvert_{r=0}=8\lambda \frac{\big(\lambda \hat{H}s(\hat{H})-\beta\big)\big(1+\hat{H}\big)^2}{s(\hat{H})}\Bigg[2+\big(1+\hat{H}\big)s(\hat{H})+\frac{\hat{H}}{s(\hat{H})\big(1+\hat{H}\big)^2}\Bigg]<0
    \label{eq: deriv discriminant}
\end{equation}
due to $\lambda \hat{H}s(\hat{H})-\beta<0$.
Therefore, as $\Delta(0)=0$, we may conclude that $\Delta(r)<0$ for $r$ sufficiently small and, hence, that the folded singularity $p_1$ is a focus.

For $\alpha^\ast<\alpha^+<\hat\alpha$, $p_2=(H_{FS2,r},\alpha_{FS2,r})$ is the folded singularity that is biologically relevant, where $H_{FS2,r}$ is a perturbation of $H_{I}$, with $\frac{\partial H_{FS2,r}}{\partial r}<0$; see Appendix~\ref{sec: desingularised system proof}. In particular, since $\Delta(0)=(\mathrm{tr}J)^2>0$ at $H=H_{FS2,0}(=H_I)$, it follows that $\Delta(r)$ will be positive for sufficiently small $r$; therefore, $p_2$ will be a folded node.
\end{proof}

\begin{remark}\label{rem: bifurcation curve}
We note that the choice of $r$ in Proposition~\ref{prop: folded node/focus} is not uniform in $\beta$, $\lambda$, and $d$. In particular, near the bifurcation curve $\mathcal{C}$ defined in \eqref{eq: bifurcation curve}, $\lambda \hat{H}s(\hat{H})-\beta$ tends to zero, which implies that \eqref{eq: deriv discriminant}, for instance, will degenerate as we approach $\mathcal{C}$.
\end{remark}

\begin{remark}\label{degeneracy}
The proof of Proposition~\ref{prop: folded node/focus} is consistent with the observation that Equation~\eqref{eq: full 4D system} can naturally be interpreted as a three-scale system for $r$ sufficiently small, with $(H,A)$ fast, $C$ slow, and $\alpha$ ``super-slow". Correspondingly, for $\alpha^+<\hat\alpha<\alpha^\ast$, $p_1$ is fully non-hyperbolic for $r=0$, with zero being a double eigenvalue of the Jacobian $J$ due to ${\rm tr}J=0=\det J$ then; similarly, for $\alpha^\ast<\alpha^+<\hat\alpha$ and $r=0$, $p_2$ degenerates to a folded saddle-node (of type II) due to ${\rm tr}J<0$ and $\det J=0$ \cite{MR2136520}.
\end{remark}

In Figure \ref{fig: desingularised systems}, we illustrate the dynamics of the desingularised system, Equation~\eqref{eq: Desingularised system}, in both $\alpha$-regimes. In particular, we confirm that the relevant folded singularity is a folded node when $\alpha^\ast<\alpha^{+}<\hat\alpha$ and a folded focus for $\alpha^{+}<\hat\alpha<\alpha^\ast$. As is well known \cite[Theorem $4.2$]{Canards}, the unfolding of a folded node in singularly perturbed systems of fast-slow type gives rise to robust canard phenomena, which include subthreshold oscillation; however, a folded focus can admit no canard trajectories \cite[Corollary~3.1]{Canards}. That distinction will be significant to our discussion of rate-induced tipping in the following section.

\begin{figure}[h]
    \centering
    \begin{subfigure}[t]{0.49\textwidth}
        \centering
        \includegraphics[width=\textwidth]{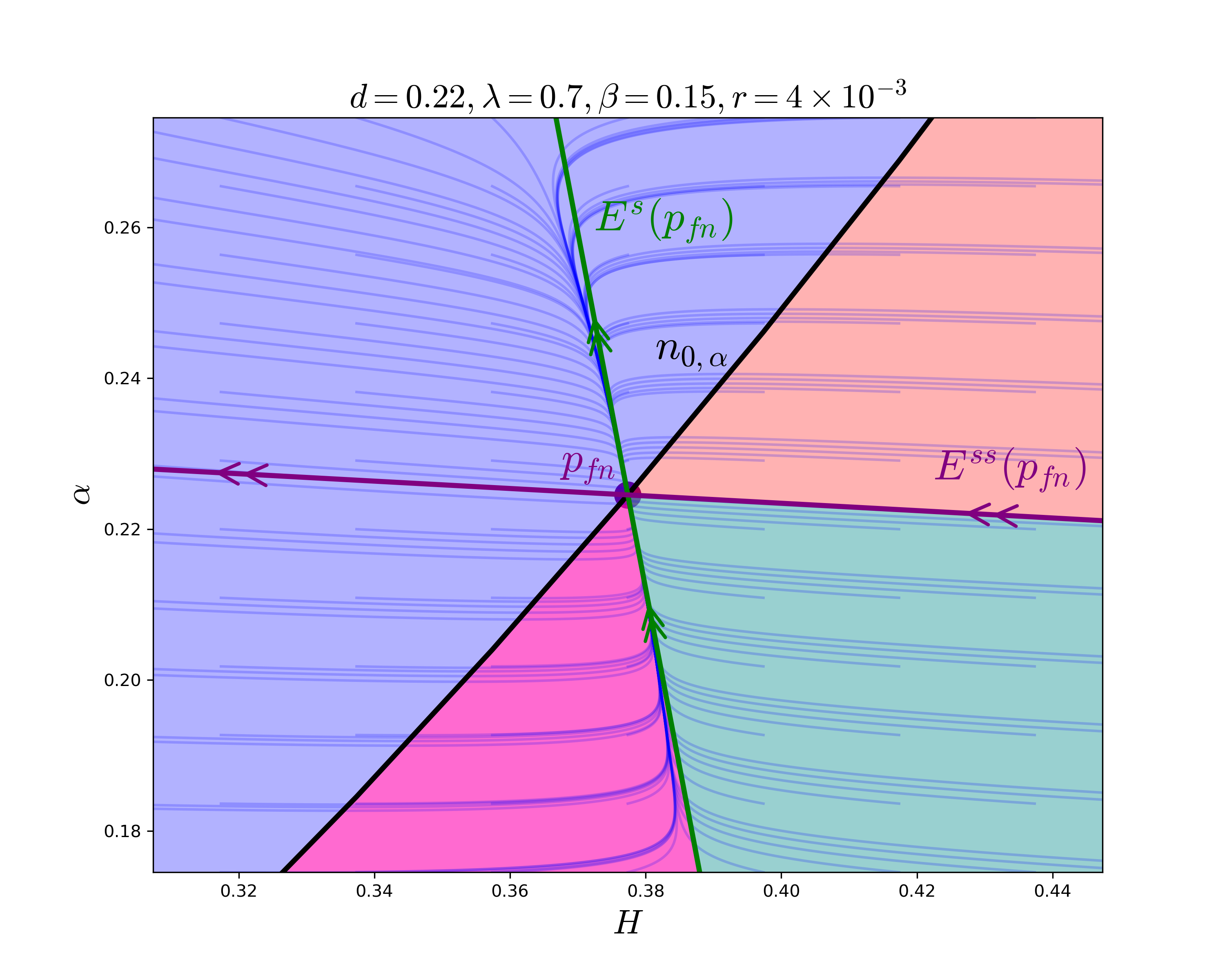}
        \caption{$\alpha^\ast<\alpha^{+}<\hat\alpha$ ($H_I<\hat{H}$)}
        \label{fig:first desing}
    \end{subfigure}
    \hfill
    \begin{subfigure}[t]{0.49\textwidth}
        \centering
        \includegraphics[width=\textwidth]{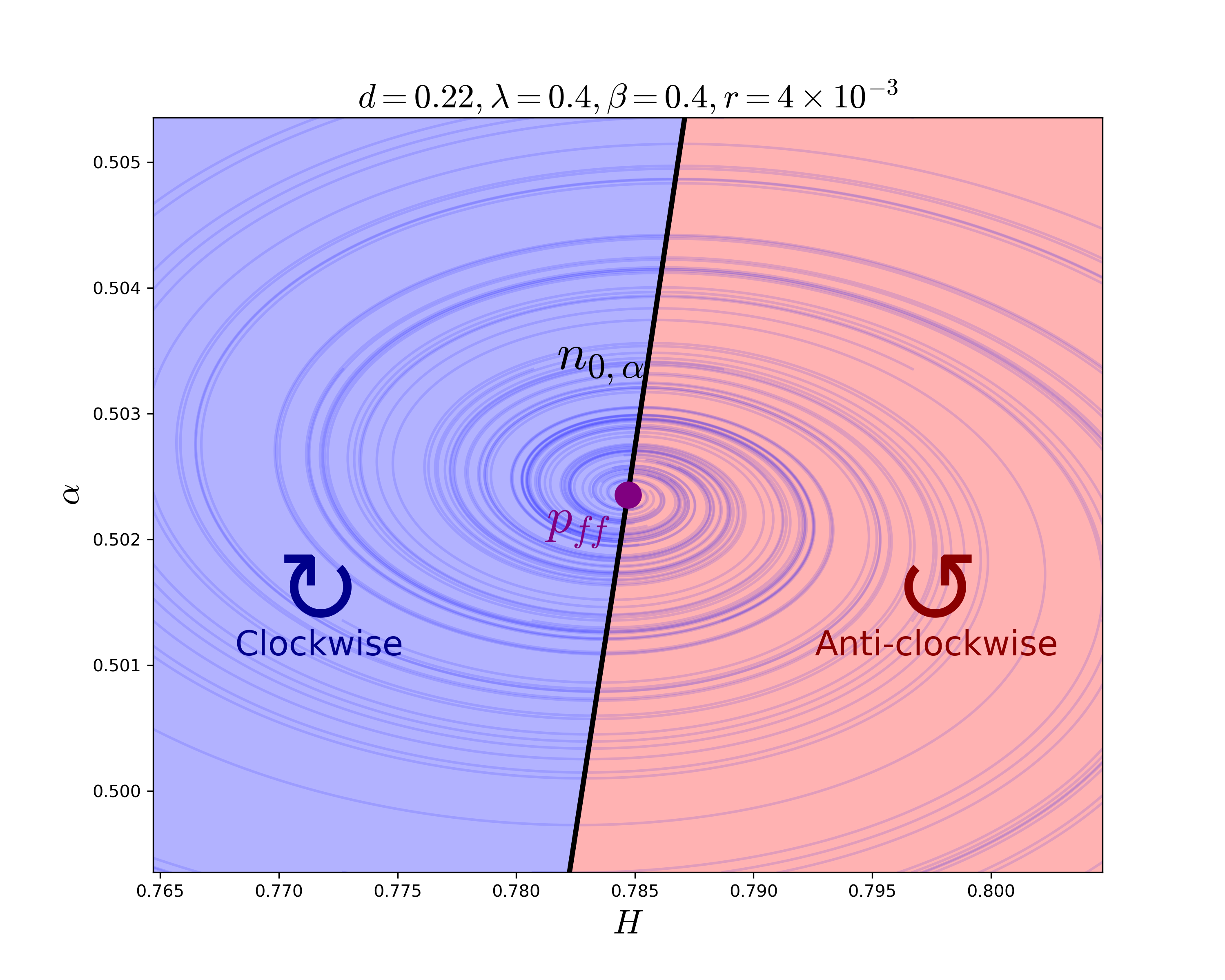}
        \caption{$\alpha^{+}<\hat\alpha<\alpha^\ast$ ($H_I>\hat{H}$)}
        \label{fig:second desing}
    \end{subfigure}
    \caption{Dynamics of the desingularised system, Equation~\eqref{eq: Desingularised system}, in a neighbourhood of the fold curve $n_{0,\alpha}$ (in black). Here, $d=0.22$ and $r=4\times 10^{-3}$, with (a) $\beta=0.15$ and $\lambda=0.7$ and (b) $\beta=0.4=\lambda$. Correspondingly, $\alpha^\ast<\alpha^{+}<\hat\alpha$ in panel (a), which implies the folded singularity $p_{fn}$ is a folded node. The corresponding strong (weak) eigendirection $E^{ss}(p_{fn})$ ($E^{s}(p_{fn})$) is indicated in purple (green); the region on $S^2_{0,\alpha}$ that lies between $n_{0,\alpha}$ and $E^{s}(p_{fn})$ is indicated in magenta, whereas the region that is bounded by the two eigendirections is marked in cyan. Arrows indicate the direction of the flow in time; note the reversal in direction between the attracting portion of $S_{0,\alpha}^2$ (red) and the repelling portion (blue) in the rescaled time $s$. In panel (b), $\alpha^{+}<\hat\alpha<\alpha^\ast$, which implies that the folded singularity $p_{ff}$ is a folded focus.}
    \label{fig: desingularised systems}
\end{figure}

\subsection{Dynamical classification}
From Proposition~\ref{prop: folded node/focus}, we may conclude that, for $r$ sufficiently small, the folded singularity is a folded node (folded focus) when $\alpha^\ast<\alpha^{+}<\hat\alpha$ ($\alpha^{+}<\hat\alpha<\alpha^\ast$). However, we cannot guarantee that this classification persists with increasing $r$. For illustration, we numerically classify the relevant folded singularity for two different values of $r$ in Figure~\ref{fig: bifurcation regions}. (We recall that we have excluded those parts of the $(\beta,\lambda)$-plane in Figure~\ref{fig: bifurcation regions} in which the coexistence state $e_{I,\alpha}$ is either repelling, or is not inside the positive orthant when $\alpha=d$, thus ensuring that it is both relevant and attracting initially.)

We define three different parameter regimes, as indicated in Figure~\ref{fig:set diff}: Region $\mathrm{I}$ (in yellow), wherein the relevant singularity remains a folded node for $r$ positive and sufficiently small; Region $\mathrm{II}$ (in green), wherein the singularity has changed from a folded focus to a folded node; and, finally, Region $\mathrm{III}$ (in purple), wherein the singularity remains a folded focus. 

The boundary between Regions $\mathrm{I}$ and $\mathrm{II}$ again agrees with the bifurcation curve $\mathcal{C}$ defined in \eqref{eq: bifurcation curve}, which follows from the fact that $\Delta(0)=(\mathrm{tr}J)^2$: for $\alpha^+<\hat{\alpha}<\alpha^{\ast}$ ($\alpha^{\ast}<\alpha^{+}<\hat{\alpha}$), the relevant folded singularity is a regular perturbation of $\hat{H}$ ($H_I$); hence, $\Delta(0)=0$ in the former regime, whereas $\Delta(0)>0$ in the latter if we assume that $H_I$ and $\hat{H}$ are distinct. Correspondingly, on the boundary, $\Delta(0)=0$ if and only if $H_I=\hat{H}$, which occurs precisely along the bifurcation curve $\mathcal{C}$; recall Proposition~\ref{prop: ordering}. Figure~\ref{fig:r small} indicates that the boundary between Regions $\mathrm{II}$ and $\mathrm{III}$ is still well approximated by $\mathcal{C}$ while $r$ is sufficiently small. However, the quality of that approximation deteriorates as $r$ increases; cf. Figure~\ref{fig:set diff} as well as the discussion in Remark~\ref{rem: bifurcation curve}.

\begin{figure}[h]
    \centering
    \begin{subfigure}[t]{0.48\textwidth}
        \centering
        \includegraphics[width=\textwidth]{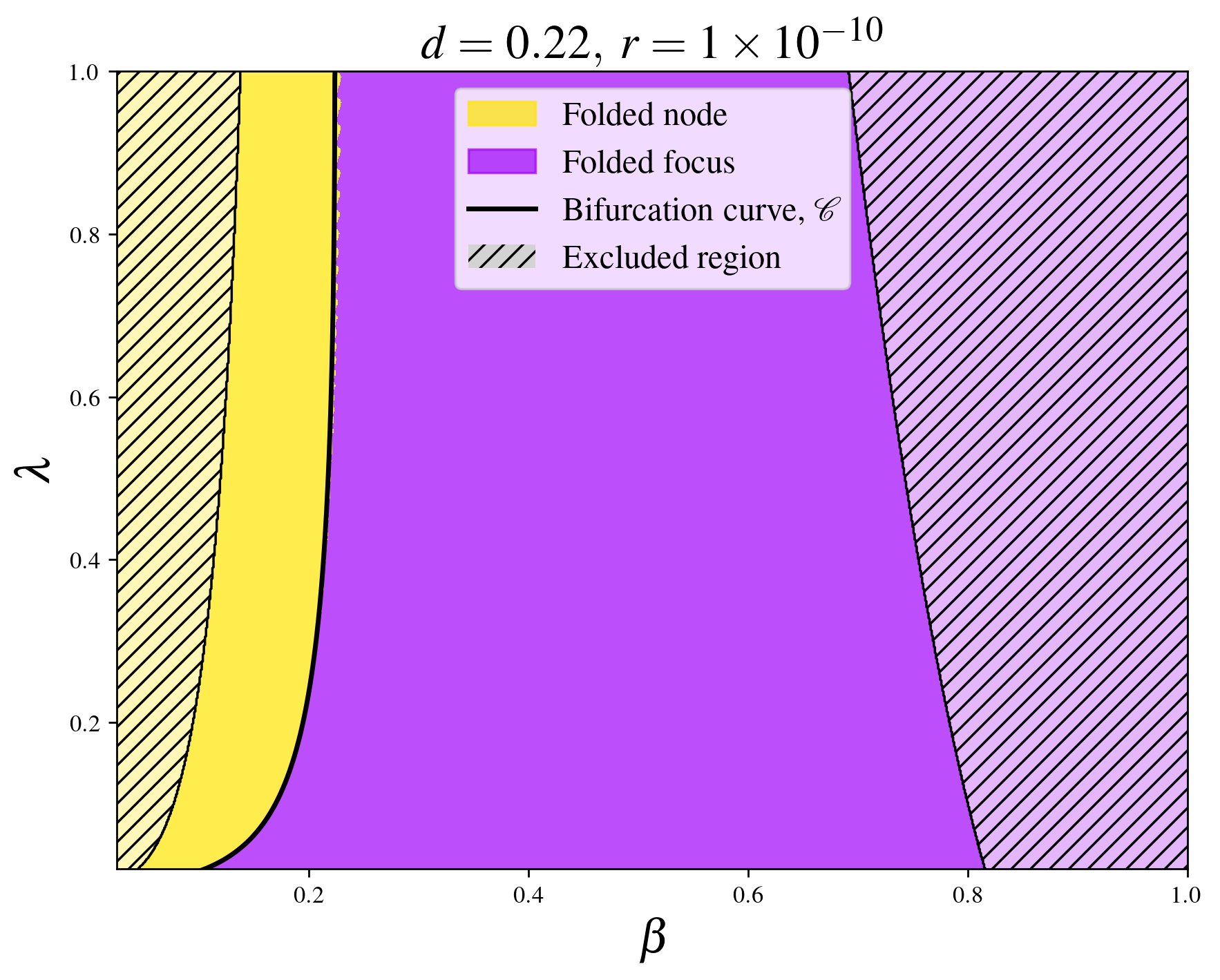}
        \caption{$r=1\times10^{-10}$}
        \label{fig:r small}
    \end{subfigure}
    \hfill
    \begin{subfigure}[t]{0.48\textwidth}
        \centering
        \includegraphics[width=\textwidth]{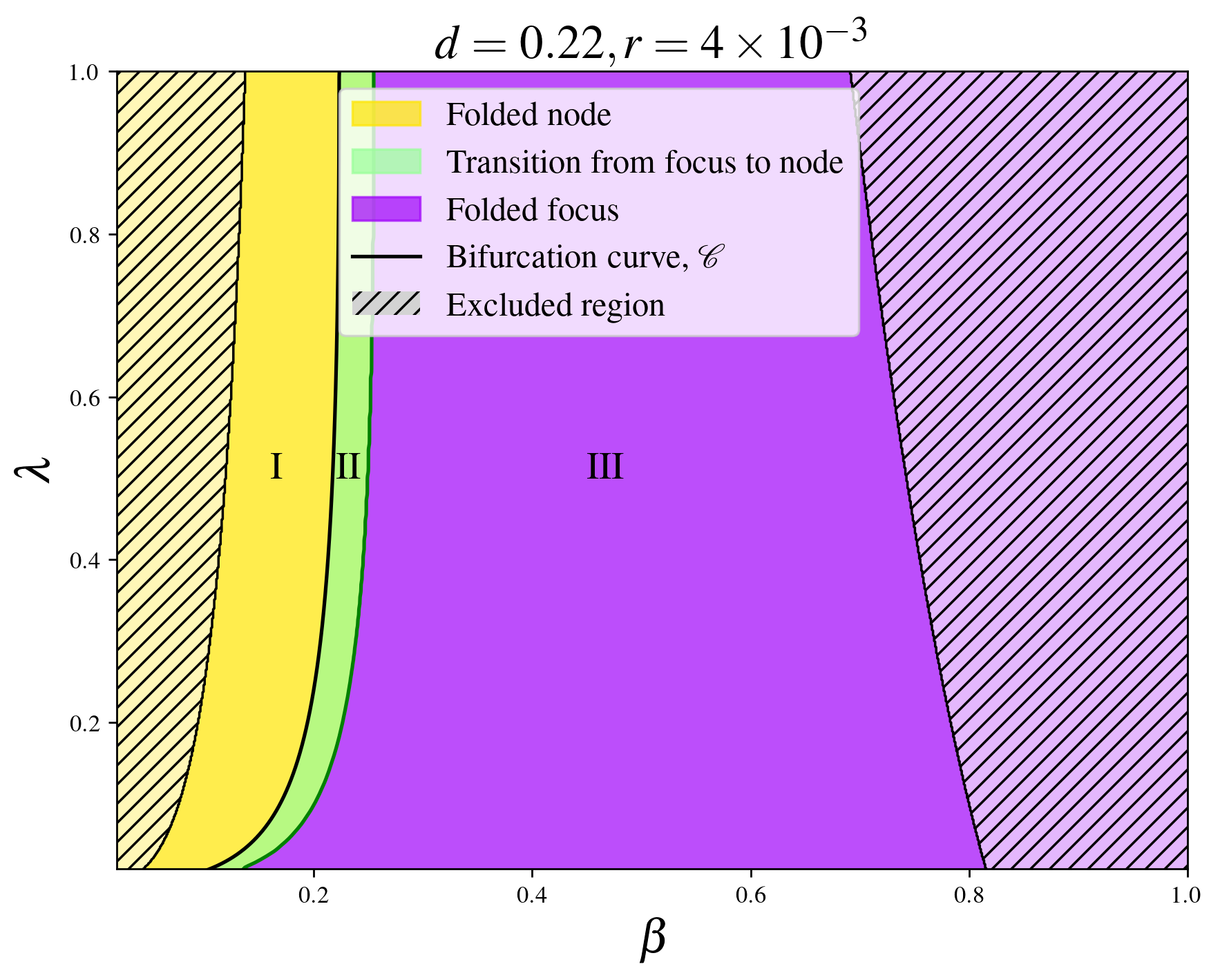}
        \caption{$r=4\times10^{-3}$}
        \label{fig:set diff}
    \end{subfigure}
    \caption{Illustration of the classification between folded nodes and folded foci for $d=0.22$ and (a) $r=1\times 10^{-10}$ and (b) $r=4\times 10^{-3}$. For both values of $r$, the region where the folded singularity is a node (focus) is highlighted in yellow (purple), with the bifurcation curve $\mathcal{C}$ given by Equation~\eqref{eq: bifurcation curve} marked in black. Panel (b) highlights the region where the classification of the folded singularity changes from a folded focus to a folded node (green).}
    \label{fig: bifurcation regions}
\end{figure}

To classify the dynamics of Equation~\eqref{eq: full 4D system} in these various parameter regimes, we now consider Regions $\mathrm{I}$, $\mathrm{II}$, and $\mathrm{III}$ in turn.

\begin{remark}
As \eqref{eq: full 4D system} cannot be solved analytically, we resort to numerical integration. Stability requirements for explicit time stepping methods for multi-scale systems of the type in Equation~\eqref{eq: full 4D system}, with $0<\varepsilon\ll 1$, require the time step to scale as $\mathcal{O}(\varepsilon)$, which may be restrictive if one wants to examine their long-term dynamics. In general, one should use an implicit or A-stable method to circumvent these restrictions for stiff problems \cite{MR2478556}. We shall make use of a variable-order ``Backward Differentiation Formula (BDF)" algorithm, as implemented in SciPy \cite{doi:10.1137/S1064827594276424,2020SciPy-NMeth}, to integrate \eqref{eq: full 4D system} numerically.
\end{remark}

\subsubsection{Region \texorpdfstring{$\mathrm{I}$}{I}}
\label{sec: canard 1}
We begin by illustrating rate-induced tipping in Region $\mathrm{I}$ of Figure~\ref{fig:set diff}, which corresponds to the regime where $\alpha^\ast<\alpha^{+}<\hat\alpha$ when $r=0$, in which case the relevant folded singularity $p_2(=p_{fn})$ in Equation~\eqref{eq: Desingularised system} is a folded node; we recall that, correspondingly, we increase $\alpha$ with rate $r$ in Region $\mathrm{I}$ until it reaches $\alpha_{\max,\delta}=\alpha^{\ast}-\delta$. Throughout, we fix $d=0.22$, $\varepsilon=0.01$, and $\delta=0.01$.

\paragraph{Canard-induced tipping}
We first choose $\beta=0.18$ and $\lambda=0.5$, as well as $r=4\times10^{-3}$. A representative trajectory is presented in Figure~\ref{fig:canards in other regime}, where the flow is initiated at the coexistence state $e_{I,\alpha}$ with $\alpha=d+\delta$. One would expect the flow to track $e_{I,\alpha}$ as the fishing effort $\alpha$ is increased. However, Figure~\ref{fig:canards in other regime} indicates ``tipping" from that state to the algae-only equilibrium $e_{A,\alpha}$. In the process, the flow transits from the attracting portion $S^{2,a}_{\varepsilon,\alpha}$ of the slow manifold $S^2_{\varepsilon,\alpha}$ onto the repelling portion $S^{2,r}_{\varepsilon,\alpha}$ via a folded-node induced canard trajectory in the vicinity of the folded singularity $p_{fn}$. It is then attracted to the slow manifold $S_{\varepsilon,\alpha}^1$ before settling into the equilibrium $e_{A,\alpha}$. Ecologically, we hence observe a catastrophic collapse in the populations of both herbivorous fish and coral, which indicates that rate-induced tipping triggers algal bloom in the underlying coral reef ecosystem. 

\begin{figure}
    \centering
     \includegraphics[scale = 0.2]{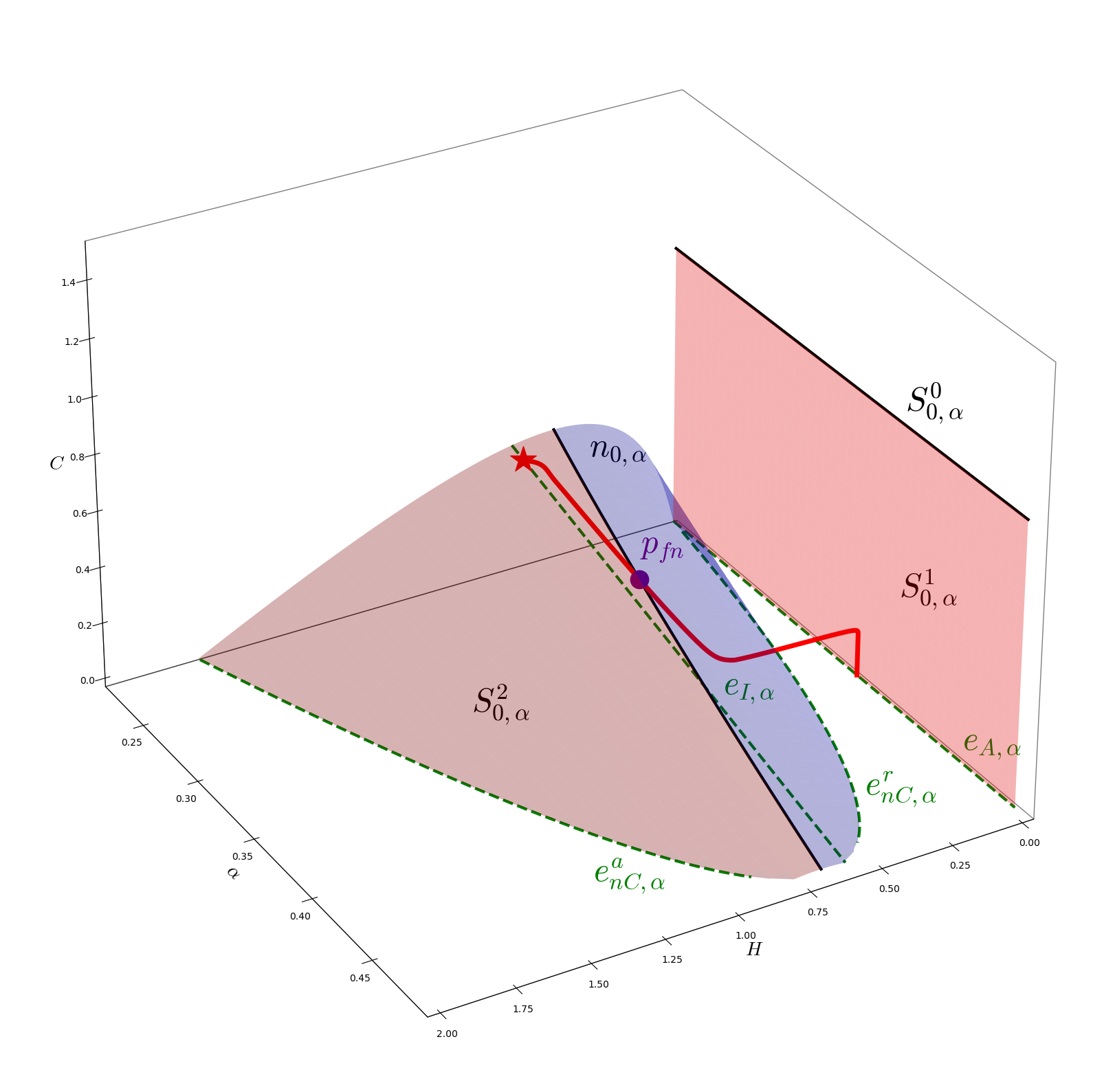}
    \includegraphics[scale = 0.2]{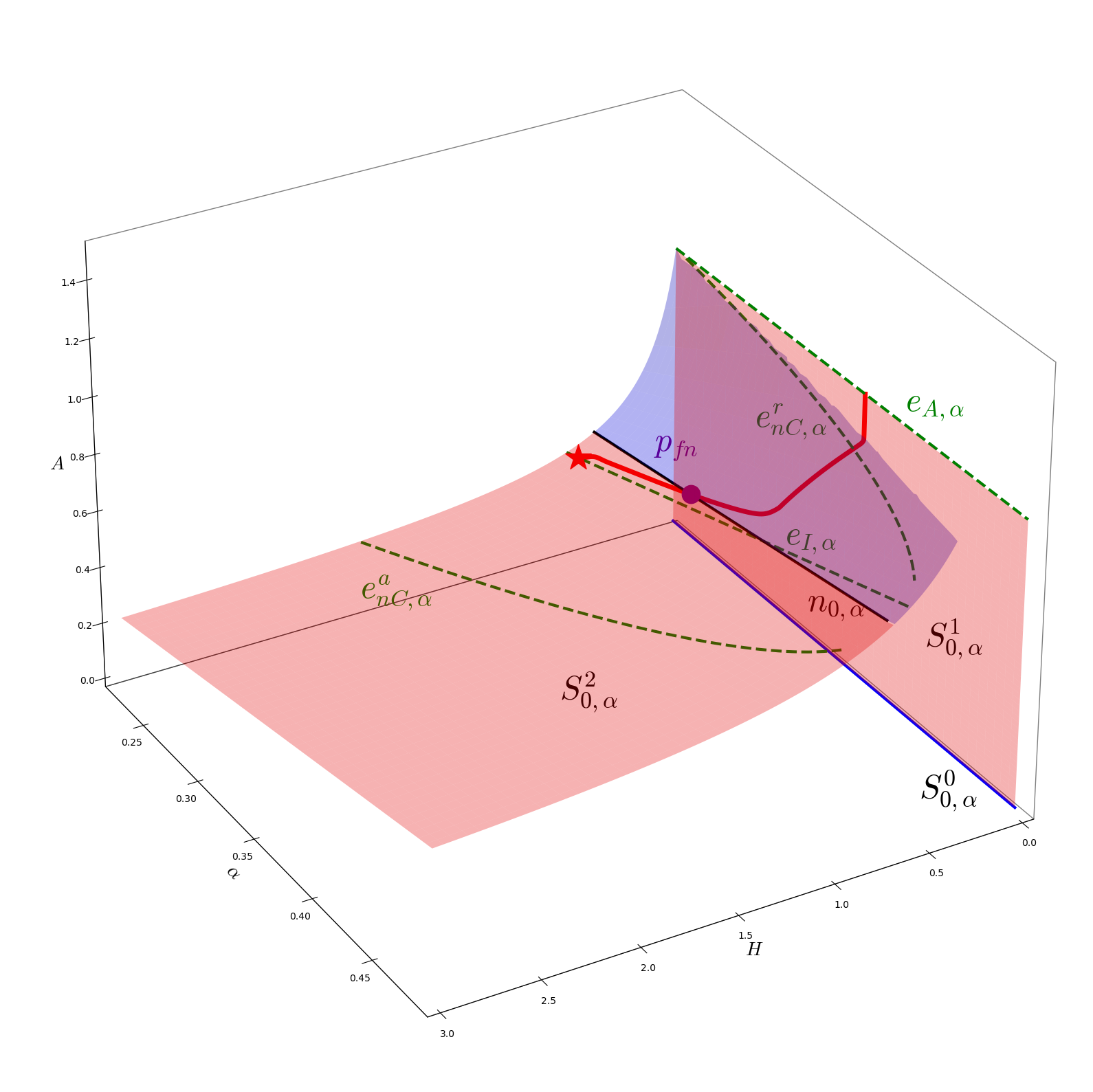}
    \caption{Canard-induced R-tipping in Region $\mathrm{I}$, with $\beta=0.18$, $\lambda = 0.5$, and $d=0.22$ in the ``ramped" Equation~\eqref{eq: full 4D system}, for $\alpha\in [d+\delta,\alpha^{\ast}-\delta]$. Here, we have taken $\varepsilon=0.01$ and $r = 4\times10^{-3}$. The red star represents the initial condition, chosen such that the trajectory originates at the stable coexistence state $e_{I,\alpha}$, with $\alpha = d+\delta$ initially.}
    \label{fig:canards in other regime}
\end{figure}

We emphasise that the trajectory illustrated in Figure~\ref{fig:canards in other regime} undergoes canard-induced R-tipping, as it is initiated in a regime where Equation~\eqref{eq: full 4D system} is bistable, with both $e_{I,\alpha}$ and $e_{A,\alpha}$ being attracting equilibria due to the restriction that $\alpha\leq\alpha_{\max,\delta}=\alpha^{\ast}-\delta$. (While Figure~\ref{fig:canards in other regime} does indicate that $e_{I,\alpha}$ loses stability soon after the tipping point, that classical bifurcation plays no role in the dynamics described above.) With reference to Remark~\ref{remark: tipping vs tracking}, Figure~\ref{fig:canards in other regime} clearly demonstrates tipping as opposed to tracking. 

\begin{remark}
We note that, as we only integrate in $\alpha$ up to $\alpha^{\ast}-\delta(=\alpha_{\max,\delta})$, for extremely small $r$ it is possible to not observe tipping numerically in Region $\mathrm{I}$, as the folded singularity $p_2$ would be such that $\alpha_{FS2,r}\in(\alpha^{\ast}-\delta,\alpha^{\ast})$. Hence, the ``ramped" Equation~\eqref{eq: full 4D system} would reduce to \eqref{fast-system} with $\alpha=\alpha_{\max,\delta}$ before tipping can occur, resulting in convergence to the coexistence state $e_{I,\alpha_{\max,\delta}}$; recall again Remark~\ref{remark: tipping vs tracking}.
\end{remark}

\paragraph{Subthreshold oscillation}

As is well known \cite{MR2916308,MR2136520}, folded nodes permit richer canard dynamics than is illustrated above; in particular, they admit subthreshold oscillation which may give rise to mixed-mode oscillatory patterns. The occurrence of subthreshold oscillation may be explained by the twisting of the slow manifolds  $S^{2,a}_{\varepsilon,\alpha}$ and $S^{2,r}_{\varepsilon,\alpha}$ about the ``weak canard" which perturbs off the weak eigendirection of the linearisation of \eqref{eq: Desingularised system} at $p_{fn}$ for $\varepsilon>0$; recall Figure~\ref{fig:first desing}. Intersections between these twisted manifolds generate $\lfloor \frac{\mu-1}{2}\rfloor$ ``secondary" canards, with $\mu>1$ denoting the ratio of the eigenvalues of the linearisation about $p_{fn}$. These secondary canards are observed in the form of subthreshold oscillations, the maximum number of which is equally given by $\lfloor \frac{\mu-1}{2}\rfloor$, and subdivide the ``funnel" region that is associated to the folded node singularity at $p_{fn}$: recalling again Figure~\ref{fig:first desing}, one would, to leading order, expect trajectories initiated in the magenta region between the fold curve $n_{0,\alpha}$ and the weak eigendirection at $p_{fn}$ to follow the weak canard, whereas trajectories initiated in the cyan region that is bounded by the weak and strong eigendirections undergo subthreshold oscillation about the weak canard; see again \cite{MR2916308,MR2136520} for details.

An illustration can be found in Figure~\ref{fig:R tipping oscillations}, where the parameters are chosen as $\beta=0.15$ and $\lambda = 0.5$, with $r=4\times 10^{-3}$, as above. The flow of \eqref{eq: full 4D system} is again initiated at the attracting coexistence state $e_{I,\alpha}$ for $\alpha=d+\delta$; furthermore, that initial condition is located within the cyan region ilustrated in Figure~\ref{fig:first desing}. With these choices, we obtain $\mu\approx 20.4255$ for the ratio of the eigenvalues of the linearisation at $p_{fn}$, corresponding to a maximum of $\lfloor \frac{20.4255-1}{2}\rfloor=9$ subthreshold oscillations, which is consistent with the trajectory shown in Figure~\ref{fig:R tipping oscillations}. We observe that canard-induced tipping still occurs as before, after the trajectory has undergone subthreshold oscillation; the long-term dynamics hence remains unchanged.

\begin{figure}[h]
    \centering
     \includegraphics[scale = 0.2]{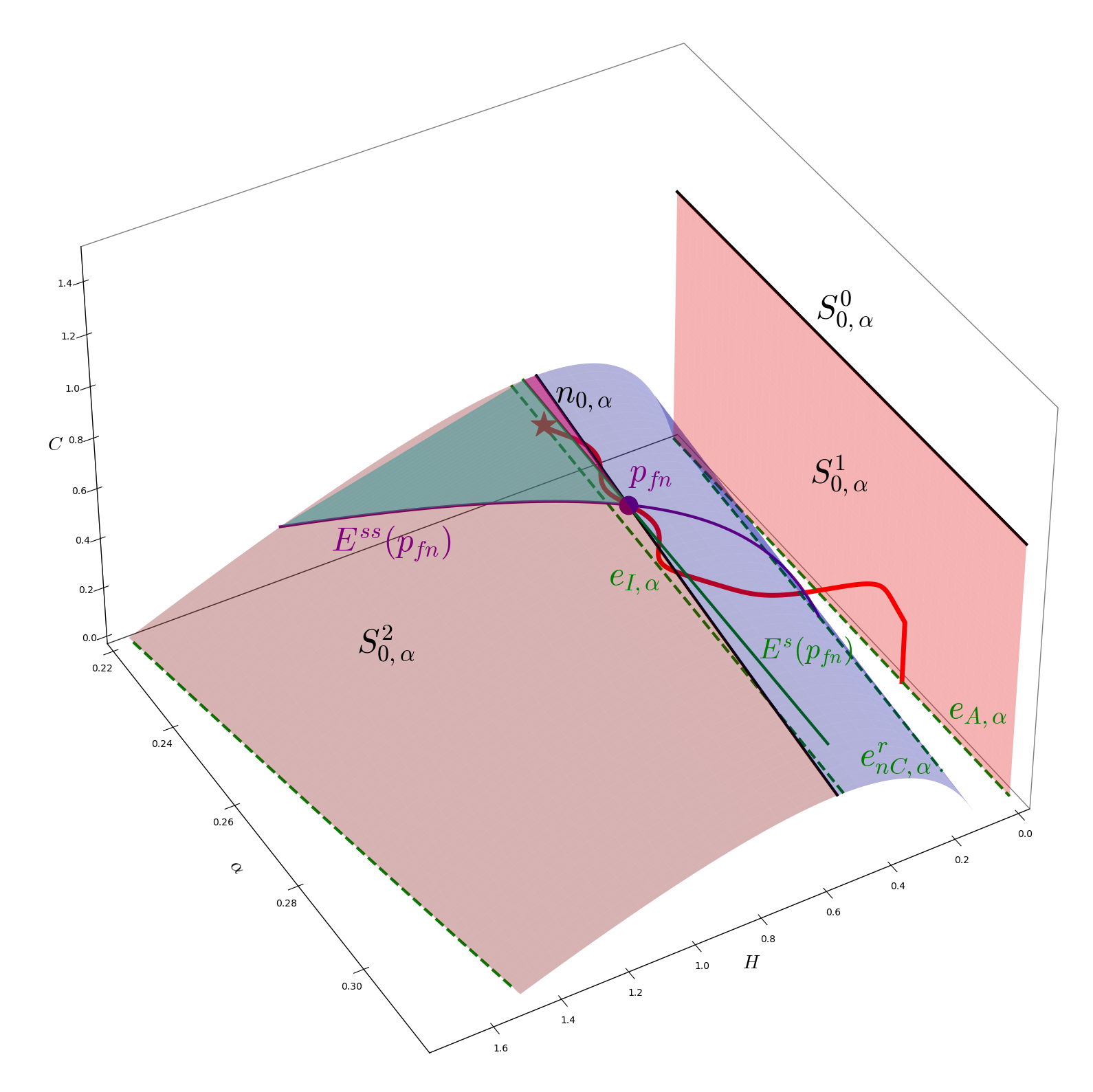}
    \includegraphics[scale = 0.3]{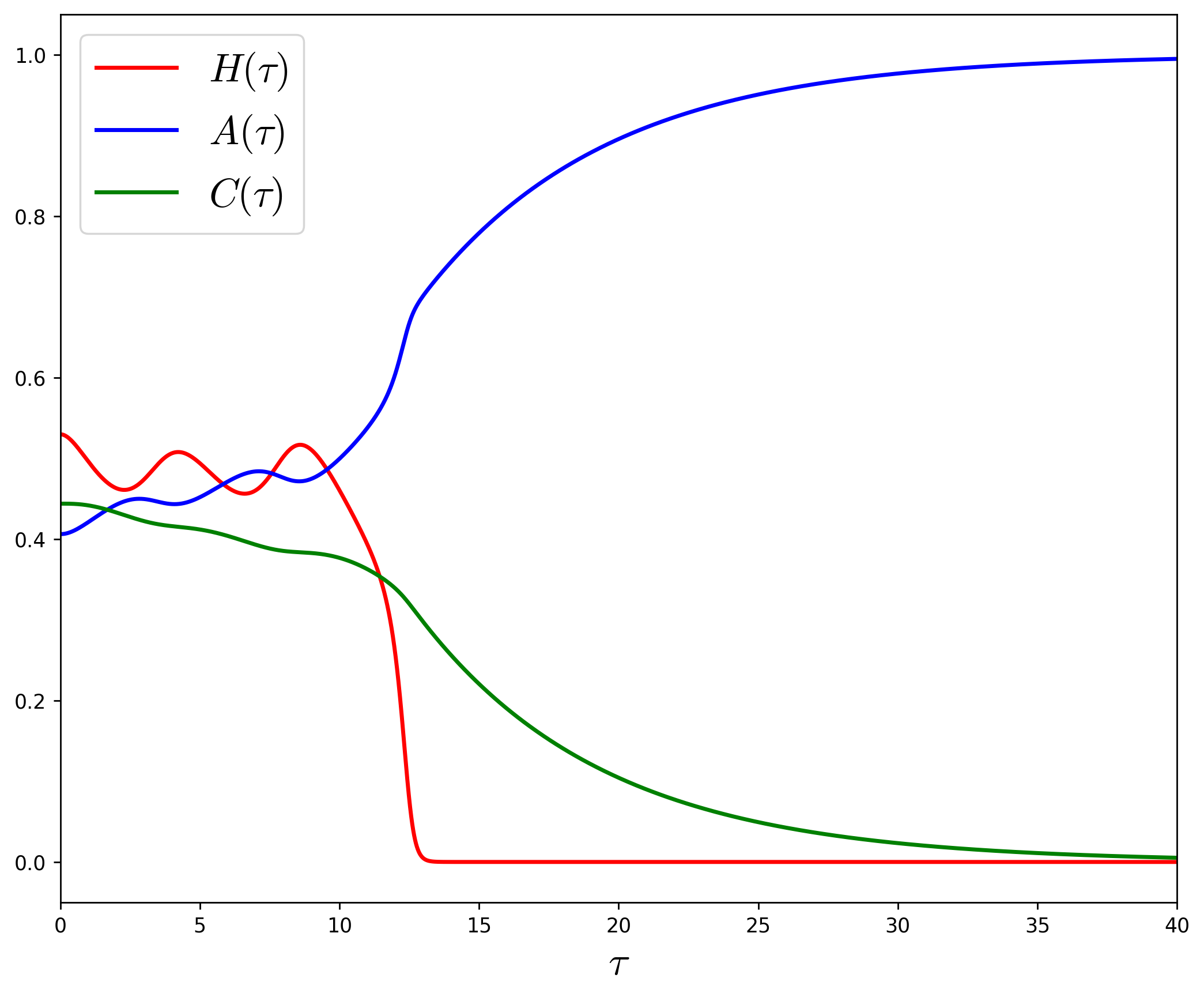}
    \caption{Canard-induced R-tipping in Region $\mathrm{I}$, with the trajectory undergoing subthreshold oscillation before tipping. The red star again represents the initial condition, chosen such that the trajectory originates at the stable coexistence state $e_{I,\alpha}$, with $\alpha = d+\delta$ initially. The strong (weak) eigendirection $E^{ss}(p_{fn})$ ($E^{s}(p_{fn})$) is indicated in solid purple (green), as before. As in Figure~\ref{fig:first desing}, the region on the attracting portion $S_{0,\alpha}^{2,a}$ that lies between the two eigendirections is indicated in cyan, whereas the region between the fold curve $n_{0,\alpha}$ and $E^{s}(p_{fn})$ is marked in magenta. The parameter values are given by $\beta=0.15$, $\lambda=0.5$, $d=0.22$, and $r=4\times10^{-3}$. The time series of the canard trajectory is also illustrated (right panel).}
    \label{fig:R tipping oscillations}
\end{figure}

From an ecological perspective, the occurrence of subthreshold oscillation in tipping trajectories indicates that a critical collapse of fish and coral populations may not be monotone. In fact, the non-monotone collapse seen in Figure~\ref{fig:R tipping oscillations} can potentially be viewed as an early warning signal for rate-induced tipping: by default, trajectories are expected to track the coexistence state $e_{I, \alpha}$, the entries $(H_{I}, A_{I}, C_{I})$ of which, as defined in \eqref{eq: coexistence point}, are all monotone in $\alpha$ and, therefore, monotone in time. 

\paragraph{Delayed Hopf bifurcation}

For $r$ sufficiently small, the dynamics of Equation~\eqref{eq: full 4D system} evolves on three distinct time scales, with $(H,A)$ the fast variables, $C$ the slow one, and $\alpha$ the ``super-slow" one; correspondingly, both $\varepsilon$ and $r$ are interpreted as singular perturbation parameters then. In the singular limit of $\varepsilon=0$, \eqref{eq: full 4D system} reduces to the ``partially perturbed" planar system
\begin{align}\label{part pert}
\begin{split}
\frac{\dd C}{\dd \tau} &= C(1-\beta-A-C),\\
    \frac{\dd \alpha}{\dd \tau} &=\begin{cases}
        r & \quad\text{for } \alpha_{\min,\delta}< \alpha < \alpha_{\max,\delta},\\
        0 & \quad \mathrm{otherwise},
        \end{cases}
\end{split}
\end{align}
which is a $(1,1)$ fast-slow system (in $r$) that is defined on the critical manifold $S_{0,\alpha}$. For $r=0$, \eqref{part pert} admits a so-called ``2-critical" manifold which corresponds precisely to the (extended) coexistence state $e_{I,\alpha}$. Figure~\ref{fig:R tipping oscillations DH} indicates that, at least in the parameter regime considered there, $e_{I,\alpha}$ is focally attracting and that the ``super-slow" flow thereon undergoes a Hopf bifurcation at which it loses stability; for $r=0$, the bifurcation point coalesces with $p_{fn}$ in a ``canard delayed Hopf singularity" in the double singular limit of $\varepsilon=0=r$. Details and further references can be found in \cite{6c8c3bfcb06e4fb8b0cd94ae49d0c15b}.  As is evident from Figure~\ref{fig:R tipping oscillations DH}, the resulting time series exhibit typical ``delayed Hopf"-type behaviour, in that dense subthreshold oscillation occurs with initially decreasing and then increasing amplitude. While that dynamics is interesting from a mathematical perspective, it appears less relevant ecologically, since trajectories that are initiated near the coexistence state $e_{I,\alpha}$, i.e., in the magenta region on the attracting portion $S_{0,\alpha}^{2,a}$ of $S_{0,\alpha}^2$ in Figure~\ref{fig:R tipping oscillations DH}, will typically tip without undergoing subthreshold oscillation.

\begin{figure}[h]
    \centering
     \includegraphics[scale = 0.2]{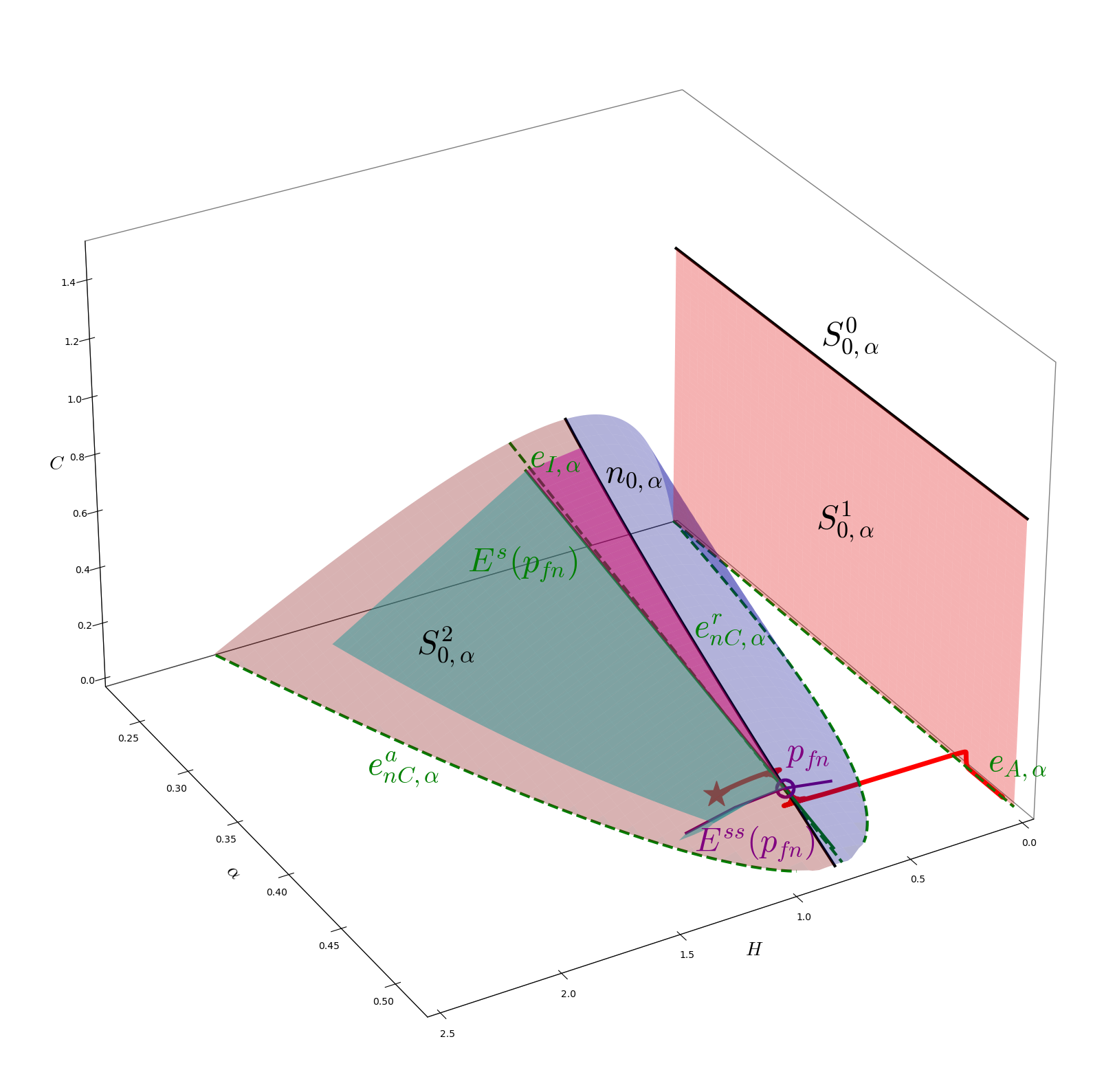}
    \includegraphics[scale = 0.3]{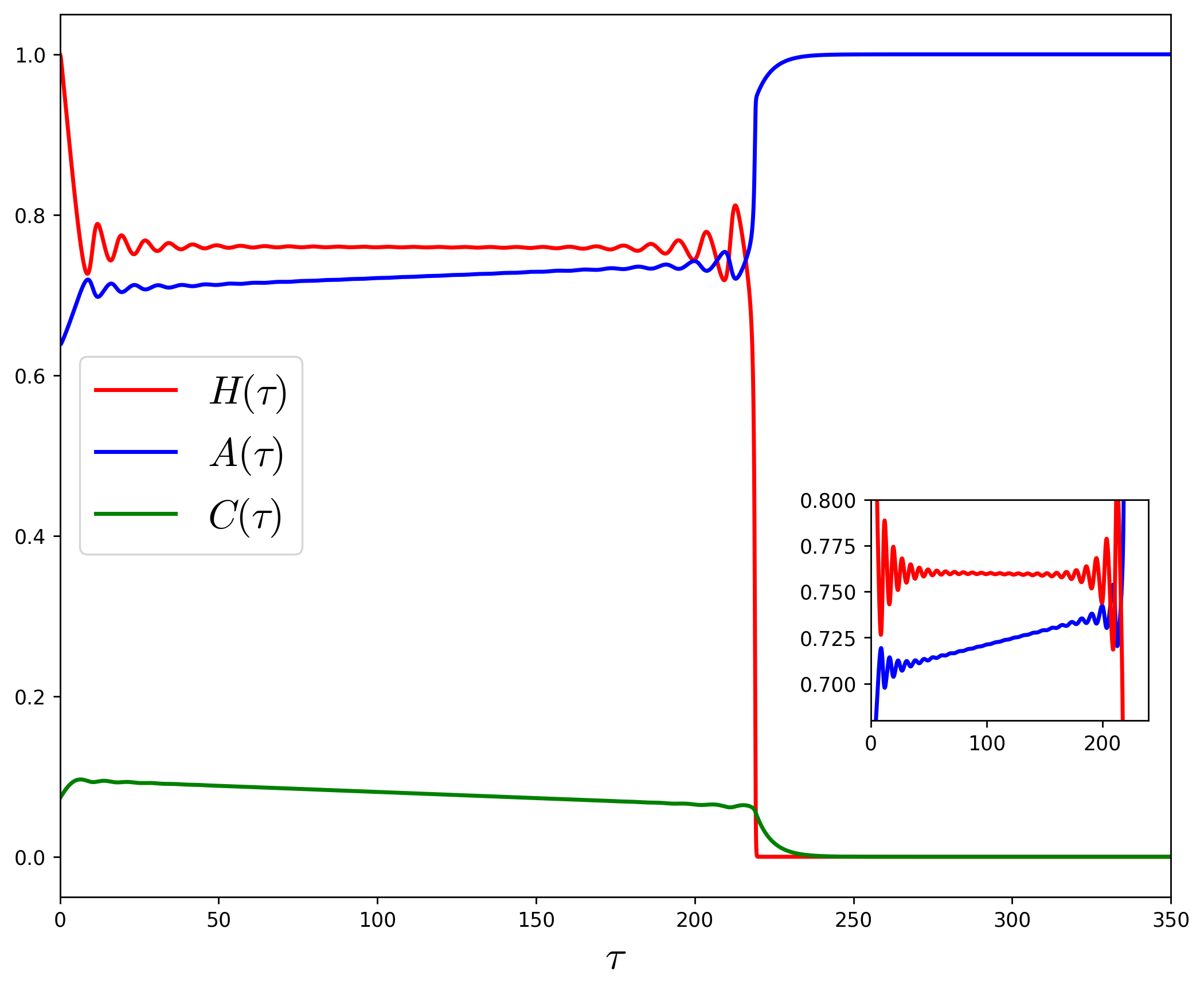}
    \caption{Canard-induced R-tipping in Region $\mathrm{I}$, with the trajectory exhibiting delayed Hopf-type behaviour before tipping. The red star again represents the initial condition, chosen such that the trajectory originates at the stable coexistence state $e_{I,\alpha}$, with $\alpha = d+\delta$ initially. The strong (weak) eigendirection $E^{ss}(p_{fn})$ ($E^{s}(p_{fn})$) is indicated in solid purple (green), as before; the region on the attracting portioin $S_{0,\alpha}^{2,a}$ that lies between the two eigendirections is indicated in cyan, whereas the region between the fold curve $n_{0,\alpha}$ and $E^{s}(p_{fn})$ is marked in magenta. The parameter values are given by $\beta=0.2$, $\lambda=0.4$, $d=0.22$, and $r=1\times10^{-4}$. The time series of the canard trajectory is also illustrated (right panel).}
    \label{fig:R tipping oscillations DH}
\end{figure}

\subsubsection{Region \texorpdfstring{$\mathrm{II}$}{II}}

Region $\mathrm{II}$ is defined as the extension of Region $\mathrm{I}$ for $r>0$ sufficiently small: Proposition~\ref{prop: folded node/focus} implies that, if $\alpha^\ast<\alpha^+<\hat\alpha$ for $r=0$, i.e., if $p_2$ is a folded node, it remains so under small perturbations in $r$. Within Region $\mathrm{I}$, we have $\mathrm{tr}J<0$ as well as $\frac{\partial\mathrm{tr}J}{\partial r}<0$, evaluated at the folded singularity $p_2$; hence, one should not expect the classification to change for $r$ sufficiently small. The regime where $\alpha^+<\hat{\alpha}<\alpha^\ast$ is more subtle, as $\mathrm{tr}J=0$ for $r=0$ in that case, which implies that the classification will depend on the higher-order asymptotics of $\Delta(r)$ in $r$. (In particular, while the coefficient $\lambda\hat{H}s(\hat{H})-\beta$ in \eqref{eq: deriv discriminant} is negative, it can be made arbitrarily small as $H_I$ approaches $\hat{H}$, i.e., near the bifurcation curve \eqref{eq: bifurcation curve}, which suggests that one should consider $\frac{\partial^2}{\partial r^2}\Delta(r)$ for a correct classification.) 

Validating numerically, we find that Region $\mathrm{II}$ has non-zero measure for $r$ positive and sufficiently small, with the classification changing from a focus to a node near the bifurcation curve $\mathcal{C}$ as $r$ increases; see Figure~\ref{fig:set diff}. Correspondingly, it vanishes as $r\to 0^+$, as is also evident from a comparison of panels (a) and (b) in Figure~\ref{fig: bifurcation regions}.

Ecologically speaking, it is reasonable to assume that the parameters $\beta$, $\lambda$, and $d$ are fixed in a given reef ecosystem, whereas $r$ can be varied. Correspondingly, one is interested in the critical rate $r_{\rm crit}$ which separates the scenario where canard-induced tipping is possible, with $p_1$ a folded node for $r$ sufficiently large, from the one where $p_1$ is a folded focus for $r$ sufficiently small, precluding the presence of canard trajectories; in other words, $r_{\rm crit}$ defines the boundary between Regions $\mathrm{II}$ and $\mathrm{III}$ for that choice of parameters. For illustration, we will choose $\beta=0.2=\lambda$ here, which lies in Region $\mathrm{II}$, with $d=0.22$, as before. 

\begin{lem}
Let $\beta=0.2=\lambda$ and $d=0.22$. Then, there exists a critical rate $r_{\rm crit}\approx 4.6602\times 10^{-6}$ such that the relevant folded singularity in \eqref{eq: Desingularised system} is a folded focus for $0<r<r_{\rm crit}$ and a folded node for $r>r_{\rm crit}$ sufficiently small. 
\end{lem}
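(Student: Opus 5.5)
Fix $\beta=0.2=\lambda$ and $d=0.22$. By Proposition~\ref{prop: ordering}, the first step is to check numerically that these values satisfy $\alpha^+<\hat\alpha<\alpha^\ast$, equivalently $\hat H<H_I$. Here $H_I$ is explicit from \eqref{eq: coexistence point}, and $\hat H$ is the unique root of $\Pi'(H)=0$, which lies in $(0,\infty)$ because $\Pi'(0)=1-\lambda d^2>0$ and $\Pi'$ is strictly decreasing. By Theorem~\ref{thm: folded singularities} and Lemma~\ref{lem: desingularised system}, the relevant folded singularity is then $p_1=(H_{FS1,r},\alpha_{FS1,r})$. It is a regular perturbation of $(\hat H,\hat\alpha)$ and is attracting for small $r$, so the node/focus classification is governed entirely by the sign of the discriminant $\Delta(r)$ in \eqref{eq: Discriminant}, evaluated at $H=H_{FS1,r}$.

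Next I would reduce the problem to a single scalar equation. The point $p_1$ is determined by $Q(H,\alpha)=0$ and $\Lambda(H,\alpha)=0$. On the fold curve we substitute $\alpha=\alpha(H)=\lambda s(H)^2[H+s(H)(1+H)^2]$. Then $\Lambda=0$ becomes
\[ r=-s(H)\Big(1-\tfrac{\alpha(H)}{s(H)}-\lambda Hs(H)\Big)\big(\lambda Hs(H)-\beta\big)=:R(H), \]
and $R(\hat H)=0$. I would invert this near $\hat H$ to obtain $H_{FS1,r}=R^{-1}(r)$, whose branch lies below $\hat H$ by Lemma~\ref{lem: desingularised system}. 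Equivalently, it is cleaner to parametrise everything by $H\in(H^\dagger,\hat H)$ and write
\[ \Delta = D(H):=\Big(\tfrac{2R(H)}{s^2}-\lambda[H+s(1+H)^2]\,u(H)\Big)^2+8\lambda R(H)\tfrac{v(H)(1+H)^2}{s}\Big[2+(1+H)s+\tfrac{H}{s(1+H)^2}\Big], \]
with $u$ and $v$ as in the proof of Theorem~\ref{thm: folded singularities}.

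Since $R$ is monotone near $\hat H$, the critical rate is $r_{\rm crit}=R(H_{\rm crit})$, where $H_{\rm crit}$ is the first zero of $D$ below $\hat H$. Near $\hat H$ we have $D<0$ to leading order: $\partial_r\Delta|_{r=0}<0$ by \eqref{eq: deriv discriminant}, which is the focus regime. The key step is to show that $D$ changes sign exactly once at $H_{\rm crit}$, with $D>0$ on some interval $(H_{\rm crit}-\eta,H_{\rm crit})$. This gives focus for $0<r<r_{\rm crit}$ and node for $r$ slightly larger than $r_{\rm crit}$. I would expand $D$ about $\hat H$. The squared term is of order $(\hat H-H)^2$, because $u(\hat H)=0$ and $R=O(\hat H-H)$. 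The negative term is of order $\hat H-H$, with coefficient proportional to $v(\hat H)$, which is small since the chosen parameters lie close to $\mathcal{C}$. So the sign change occurs at $\hat H-H\approx -c_1 v(\hat H)/c_2$, and I would compute the constants $c_1,c_2$.

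The main obstacle is that the location of the sign change, and hence the value $4.6602\times10^{-6}$, is not given by a closed form. I would therefore locate the root of $D$ numerically, for instance by bisection on $H$ in a bracket where $D$ changes sign, verifying the sign at the endpoints with rigorous or high-precision arithmetic. I would also check that $D'(H_{\rm crit})\neq0$, so that the crossing is transversal. Then $r_{\rm crit}=R(H_{\rm crit})\approx4.6602\times10^{-6}$ follows by evaluation, and it remains only to confirm that $\det J>0$ persists on this range of $r$, so that $p_1$ remains attracting.
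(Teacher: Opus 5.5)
Your proposal is correct and is essentially the paper's argument: the paper simply solves $\Delta(r)=0$ numerically for the discriminant at $p_1$. Your parametrisation by $H$ through the explicit relation $r=R(H)=-s(H)u(H)v(H)$ is an equivalent, and somewhat cleaner, way to do that computation, and it adds the monotonicity, transversality and $\det J>0$ checks that the paper leaves implicit. One correction to your heuristic only: since $R$ itself carries the factor $v(\hat H)$, the negative term in $D$ is of order $v(\hat H)^2(\hat H-H)$ rather than $v(\hat H)(\hat H-H)$, so to leading order $\hat H-H_{\rm crit}\propto v(\hat H)^2$ and $r_{\rm crit}\propto|v(\hat H)|^3$, which is consistent with the very small value $r_{\rm crit}\approx4.66\times10^{-6}$ here (where $v(\hat H)\approx-5.6\times10^{-3}$).
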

\begin{proof}
The rate $r_{\rm crit}$ is calculated numerically by solving $\Delta(r)=0$ in Equation~\eqref{eq: Discriminant} for $r$, with $(\beta,\lambda,d)$ as given. 
\end{proof}
To confirm the criticality of $r_{\rm crit}$ numerically, we first consider $\beta=0.2=\lambda$, $d=0.22$, and $r_{\rm crit}<r=4\times 10^{-3}$; see Figure \ref{fig:R-tipping}. The flow is again initiated at the coexistence equilibrium $e_{I,\alpha}$, with $\alpha=d+\delta$. As the relevant folded singularity is a node, the parameter triple $(\beta,\lambda,d)$ must be located in Region $\mathrm{II}$; correspondingly, canard-induced tipping will be observed. The dynamics is qualitiatively similar to that seen in Section~\ref{sec: canard 1}, with the important distinction that the coexistence state $e_{I,\alpha}$ does not also lose stability soon after the trajectory has tipped.

\begin{figure}[h]
    \centering
     \includegraphics[scale = 0.2]{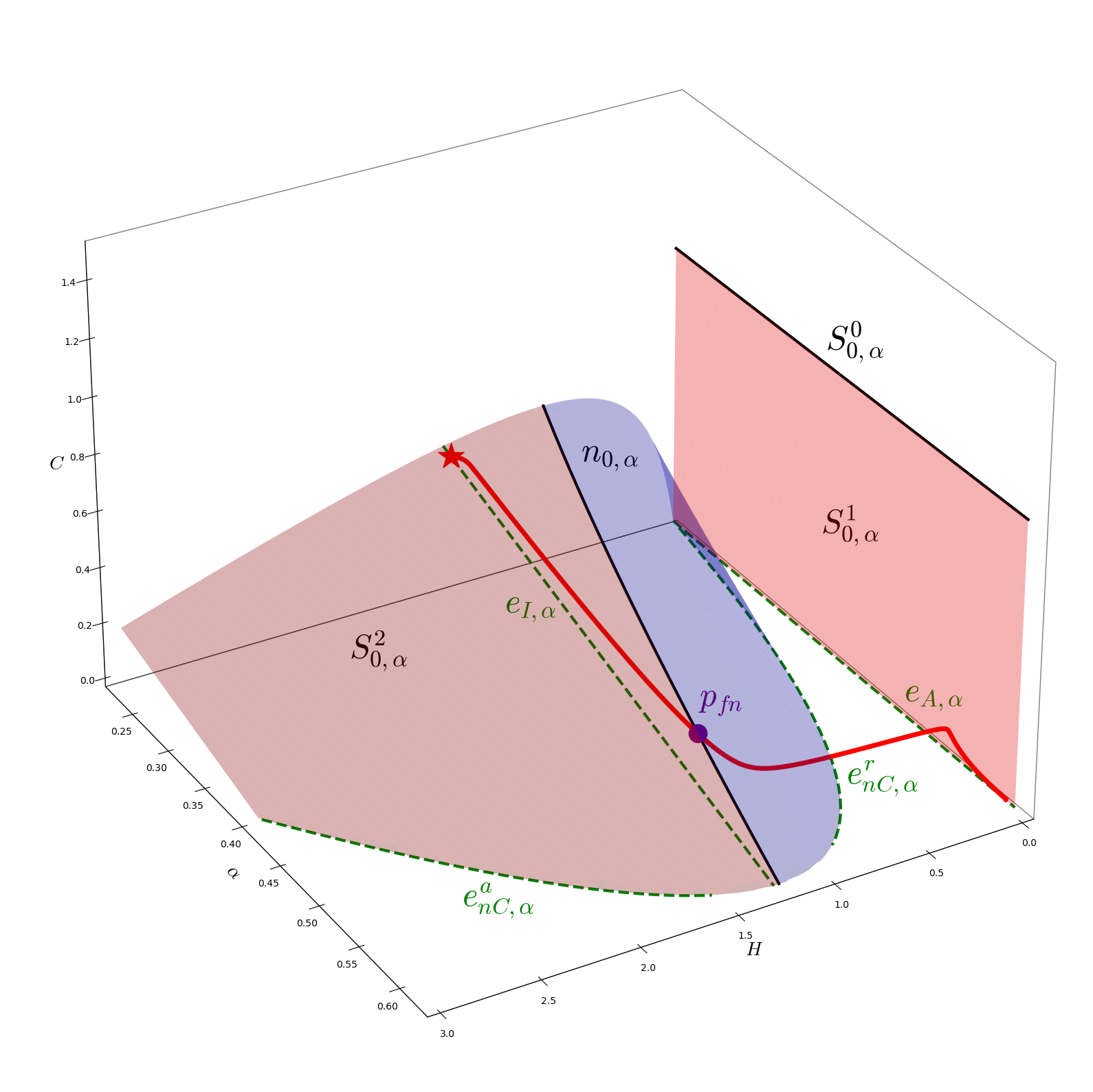}
    \includegraphics[scale = 0.2]{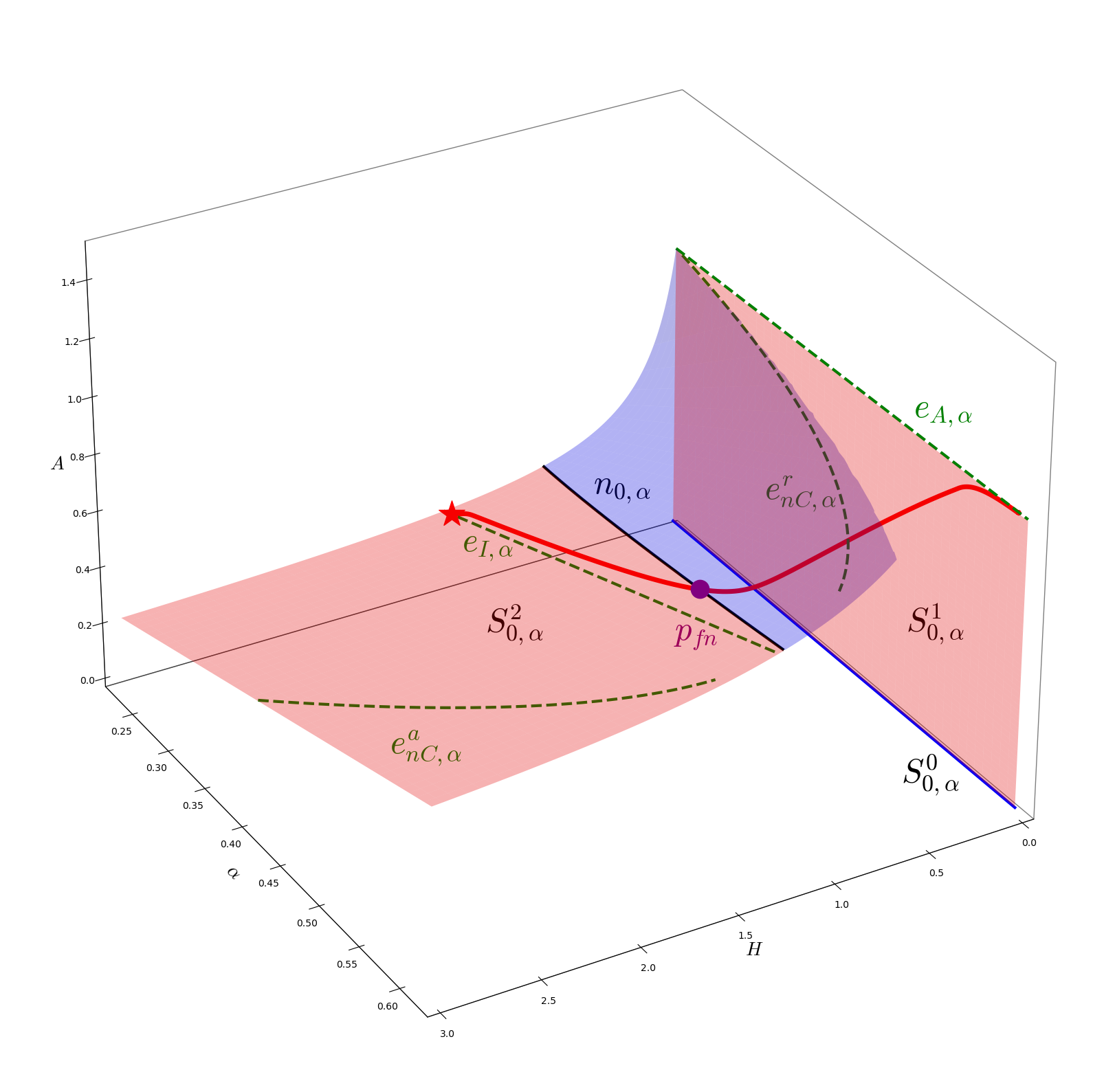}
    \caption{Canard-induced R-tipping in Region $\mathrm{II}$, with $\beta = 0.2 = \lambda$ and $d=0.22$, for $\alpha\in [d+\delta,\alpha^+-\delta]$. Here, we have taken $\varepsilon=0.01$ and $r = 4\times10^{-3}$. The red star represents the initial condition, chosen such that the trajectory originates at the stable coexistence state $e_{I,\alpha}$, with $\alpha = d+\delta$ initially.}
    \label{fig:R-tipping}
\end{figure}

In Figure~\ref{fig:2D canard trajectory}, we compare the time series of the canard trajectory illustrated in Figure~\ref{fig:R-tipping} to that of the tracked equilibrium $e_{I, \alpha}$. Standard linear theory predicts that any trajectory which is initiated sufficiently close to the coexistence state $e_{I, \alpha}$ will be attracted towards it for any fixed $\alpha\in(\alpha_{\rm min,\delta}, \alpha_{\rm max,\delta})$. However, the underlying canard phenomenon allows for such trajectories to ``tip" towards $e_{A, \alpha}$ instead. As the value of $H_{I}$ is independent of $\alpha$, by \eqref{eq: coexistence point}, an increase in $\alpha$ in \eqref{eq: full 4D system} would indicate an increase in the algal population $A$, which would necessitate a decrease in the competing population of coral ($C$), as evidenced in Figure~\ref{fig:2D canard trajectory}. While an increase in the fishing effort $\alpha$ hence leads to algae dominating over herbivorous fish, the three-dimensional model, Equation~\eqref{fast-system}, predicts that the fish population should remain unchanged and that there is still coexistence between algae and coral, as is again reflected in Figure~\ref{fig:2D canard trajectory}. However, the canard phenomenon which occurs in the ``ramped", four-dimensional model, Equation~\eqref{eq: full 4D system} results in the population of herbivorous fish collapsing catastrophically, which leads to a collapse in the coral population, with the flow settling into the algae-only state.

\begin{figure}
    \centering
    \includegraphics[width=0.5\linewidth]{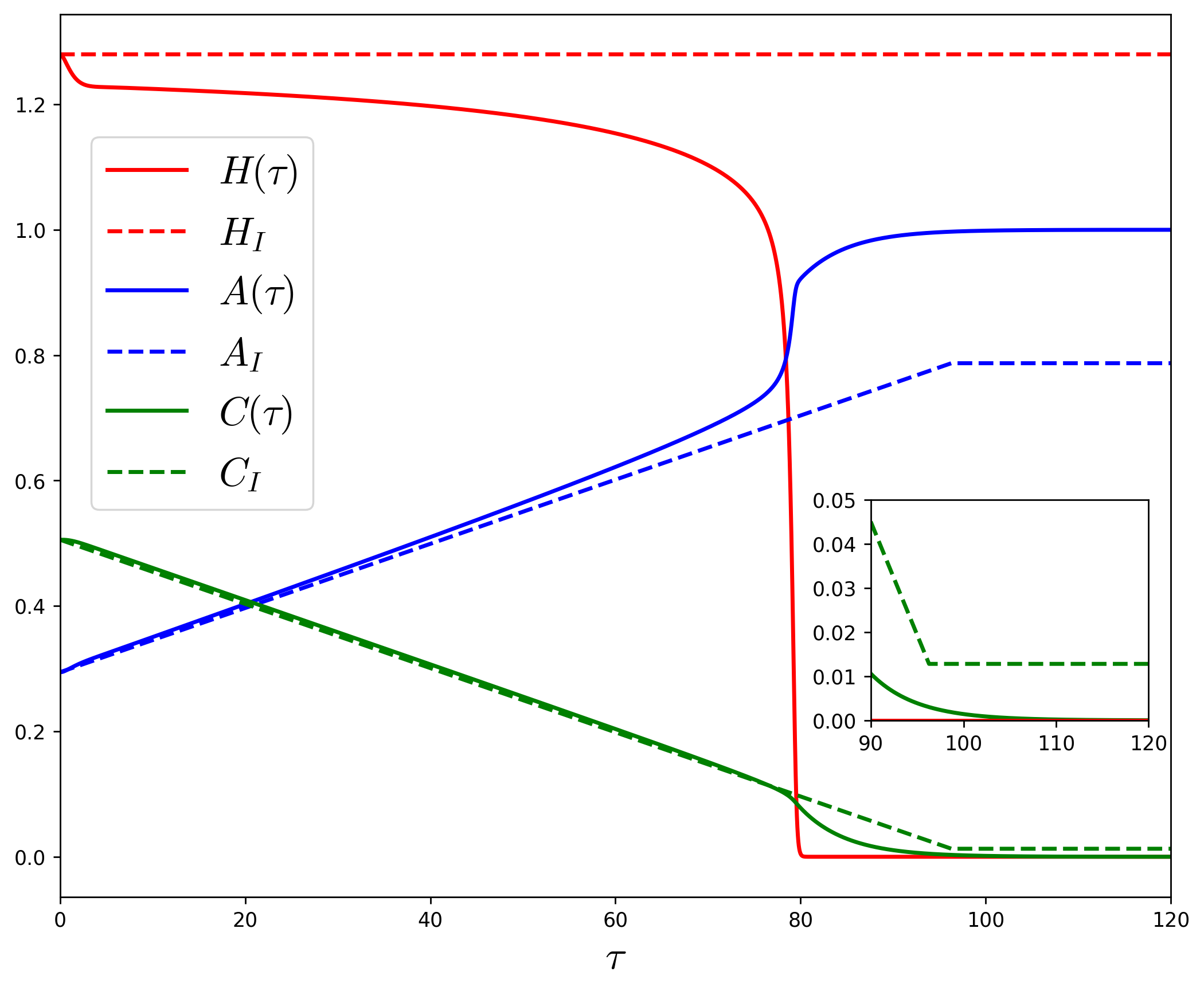}
    \caption{Time series of the canard trajectory illustrated in Figure~\ref{fig:R-tipping} in the slow time $\tau$. Here, herbivorous fish, algae, and coral are represented in red, blue, and green, respectively. A comparison against the evolution of the stable coexistence state $e_{I, \alpha}$ is also given (dashed).}
    \label{fig:2D canard trajectory}
\end{figure}

For the same parameter triple $(\beta,\lambda,d)$ and the same initial condition, but $r=3\times10^{-6}<r_{\rm crit}$, the dynamics are illustrated in Figure~\ref{fig:No R-tipping}. As predicted, the folded singularity is a focus, in which no R-tipping occurs; rather, the trajectory simply tracks $e_{I,\alpha}$, which implies that the chosen parameter regime is located within Region $\mathrm{III}$ now.

\begin{figure}[h]
    \centering
     \includegraphics[scale = 0.2]{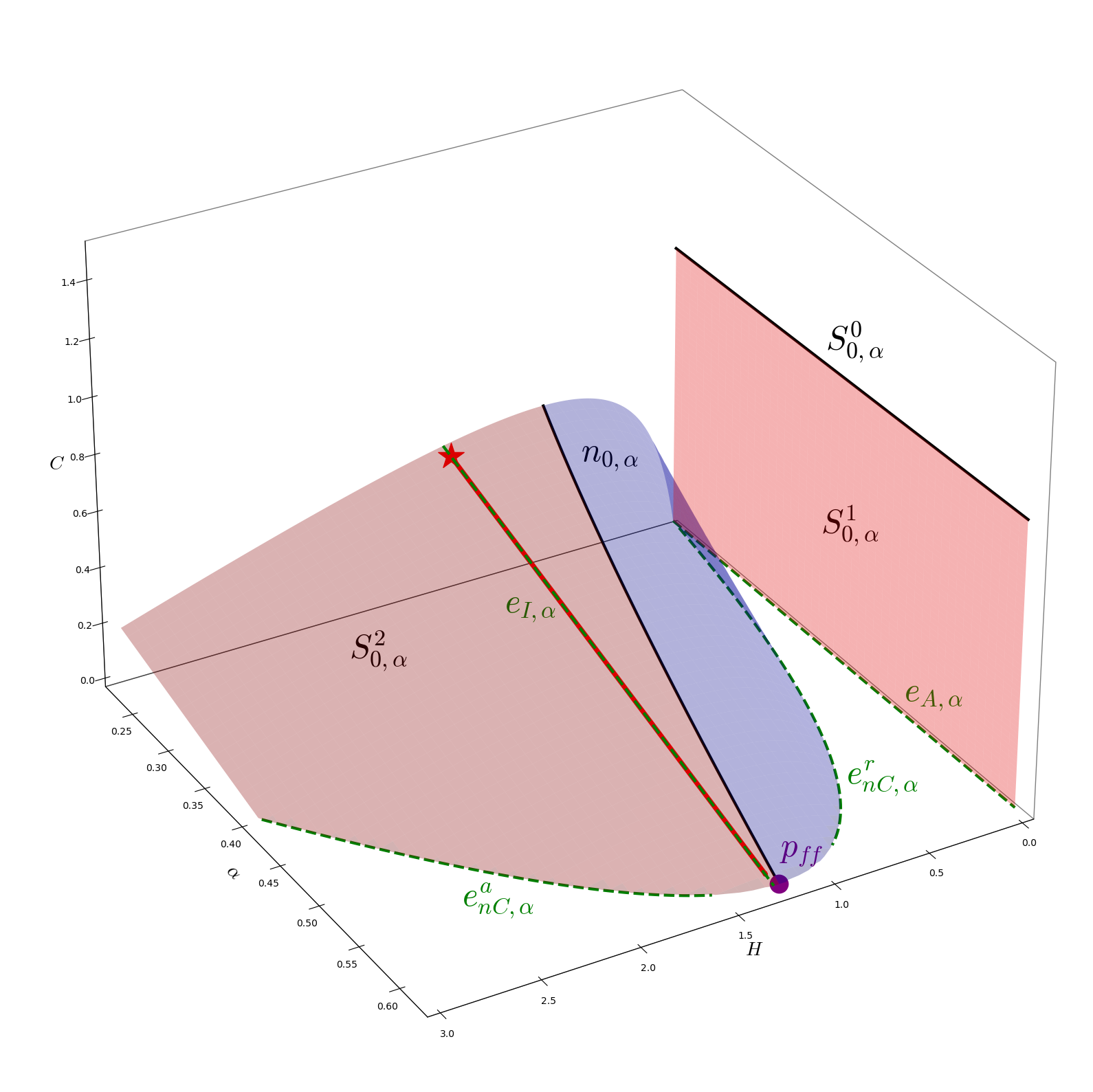}
    \includegraphics[scale = 0.2]{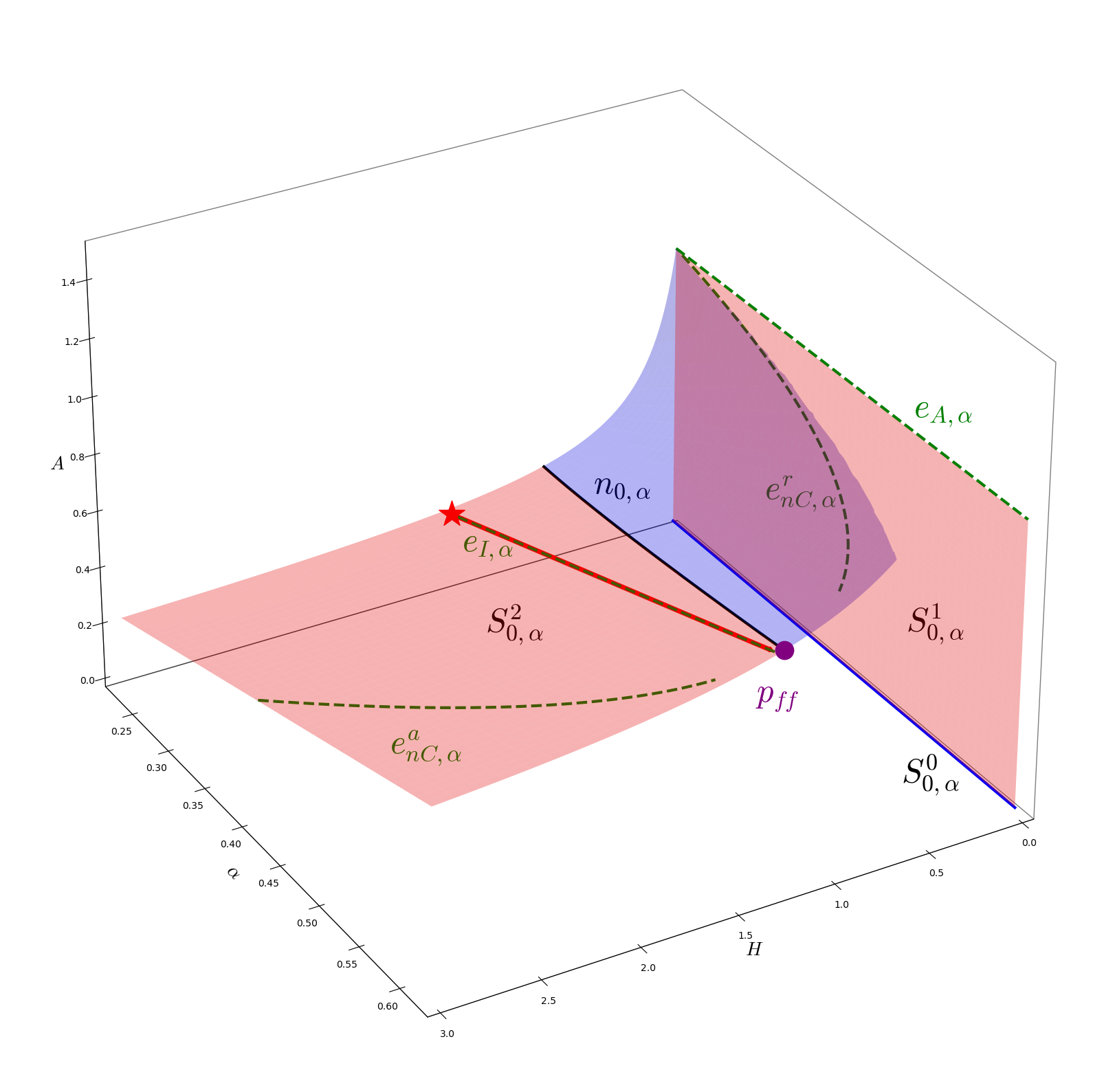}
    \caption{Tracking in Region $\mathrm{II}$, with $\beta = 0.2 = \lambda$ and $d=0.22$, for $\alpha\in [d+\delta,\alpha^+-\delta]$. Here, we have taken $\varepsilon=0.01$ and $r = 3\times10^{-6}$. The red star represents the initial condition, chosen such that the trajectory originates at the stable coexistence state $e_{I,\alpha}$, with $\alpha = d+\delta$ initially.    
    }
    \label{fig:No R-tipping}
\end{figure}

\subsubsection{Region \texorpdfstring{$\mathrm{III}$}{III}}
In Region $\mathrm{III}$, the relevant folded singularity of \eqref{eq: Desingularised system} is a focus; as folded foci do not admit canard trajectories \cite{SZMOLYAN2001419}, any rate-induced tipping that is observed cannot be of canard type; conversely, we do not expect to see tipping throughout Region $\mathrm{III}$. An example trajectory with $\beta=0.3$, $\lambda = 0.4$, and $r = 1\times10^{-5}$ is illustrated in Figure~\ref{fig:alpha_hat trajectory}, where we note that $\alpha_{\rm max} = \alpha^+$; we observe that the trajectory simply tracks the coexistence state throughout until integration is halted at $\alpha_{\max,\delta}=\alpha^{+}-\delta$ so that the transcritical bifurcation at $\alpha=\alpha^{+}$ is avoided.

\begin{figure}[h]
    \centering
     \includegraphics[scale = 0.2]{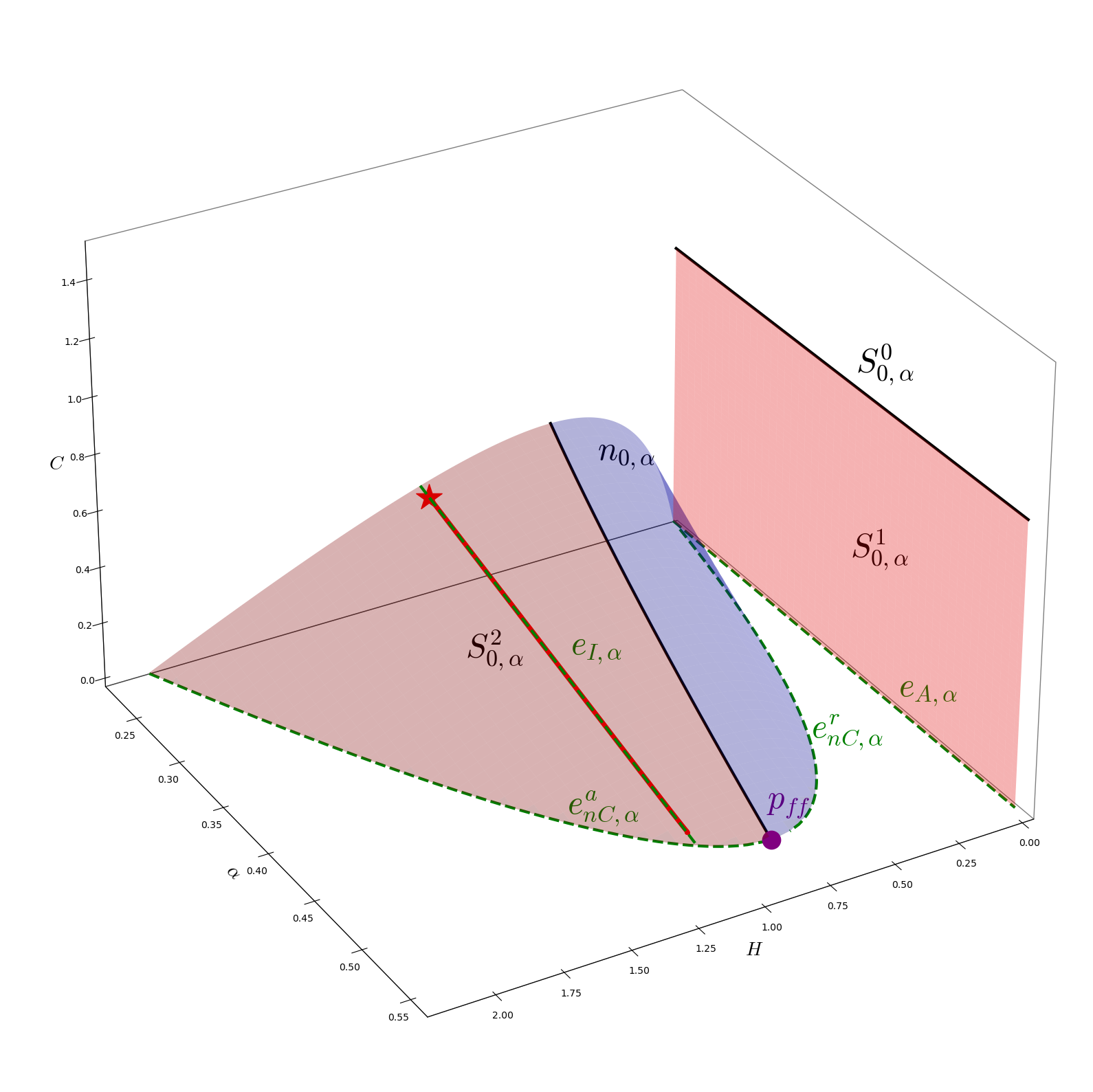}
    \includegraphics[scale = 0.2]{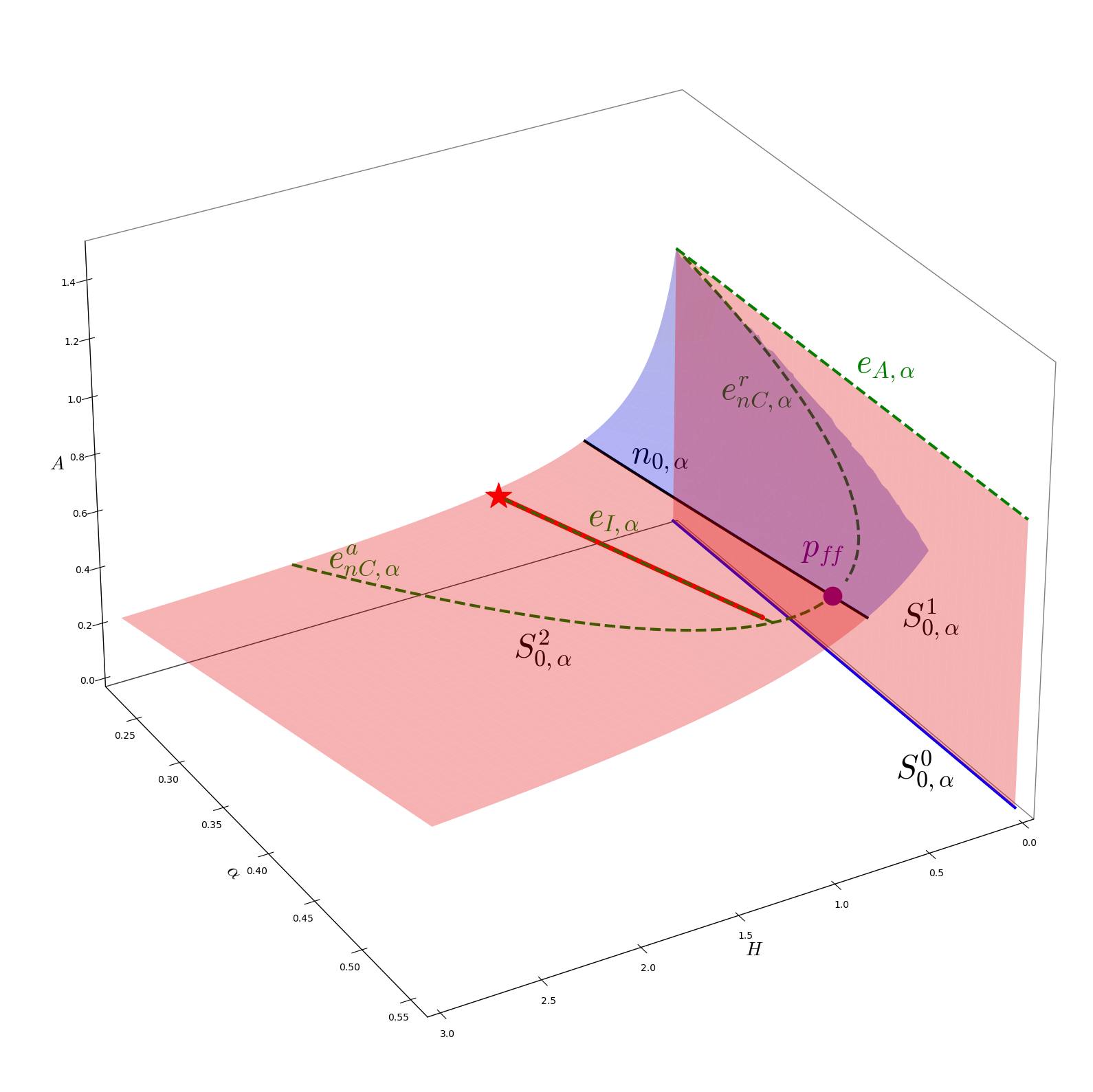}
    \caption{Tracking in Region $\mathrm{III}$, with $\beta=0.3$, $\lambda = 0.4$, and $d=0.22$, for $\alpha\in [d+\delta,\alpha^+-\delta]$. Here, we have taken $\varepsilon=0.01$ and $r = 1\times10^{-5}$. The red star represents the initial condition, chosen such that the trajectory originates at the stable coexistence state $e_{I,\alpha}$, with $\alpha = d+\delta$ initially.}
    \label{fig:alpha_hat trajectory}
\end{figure}

For $r$ sufficiently large, it is still possible to see rate-induced tipping in Region $\mathrm{III}$; however, it will be of ``jump type", rather than canard-induced, in contrast to Regions $\mathrm{I}$ and $\mathrm{II}$. A representative example is illustrated in Figure~\ref{fig:focus jump trajectory}, where we have chosen the same parameter set and initial condition as in Figure~\ref{fig:alpha_hat trajectory}, with the exception of $r=4\times10^{-3}$. One observes, in particular, that the trajectory now leaves the attracting portion $S_{\varepsilon,\alpha}^{2,a}$ of the manifold $S_{\varepsilon,\alpha}^{2}$ via a jump point on the fold curve $n_{0,\alpha}$. The corresponding $\alpha$-value exceeds the value $\alpha_{FS1,r}$ at the folded singularity $p_{1}$, which is consistent with the geometry of the reduced flow in a neighbourhood of the folded focus $p_{ff}$ on the critical manifold $S_{0,\alpha}^{2,a}$; recall Figure~\ref{fig:second desing}. After jumping, the trajectory is immediately attracted to the slow manifold $S_{\varepsilon,\alpha}^{1}$, eventually settling into the algae-only state $e_{A,\alpha}$. Ecologically speaking, we hence again observe a catastrophic decline in the populations of herbivorous fish $H$ and coral $C$.

\begin{figure}[h]
    \centering
     \includegraphics[scale = 0.2]{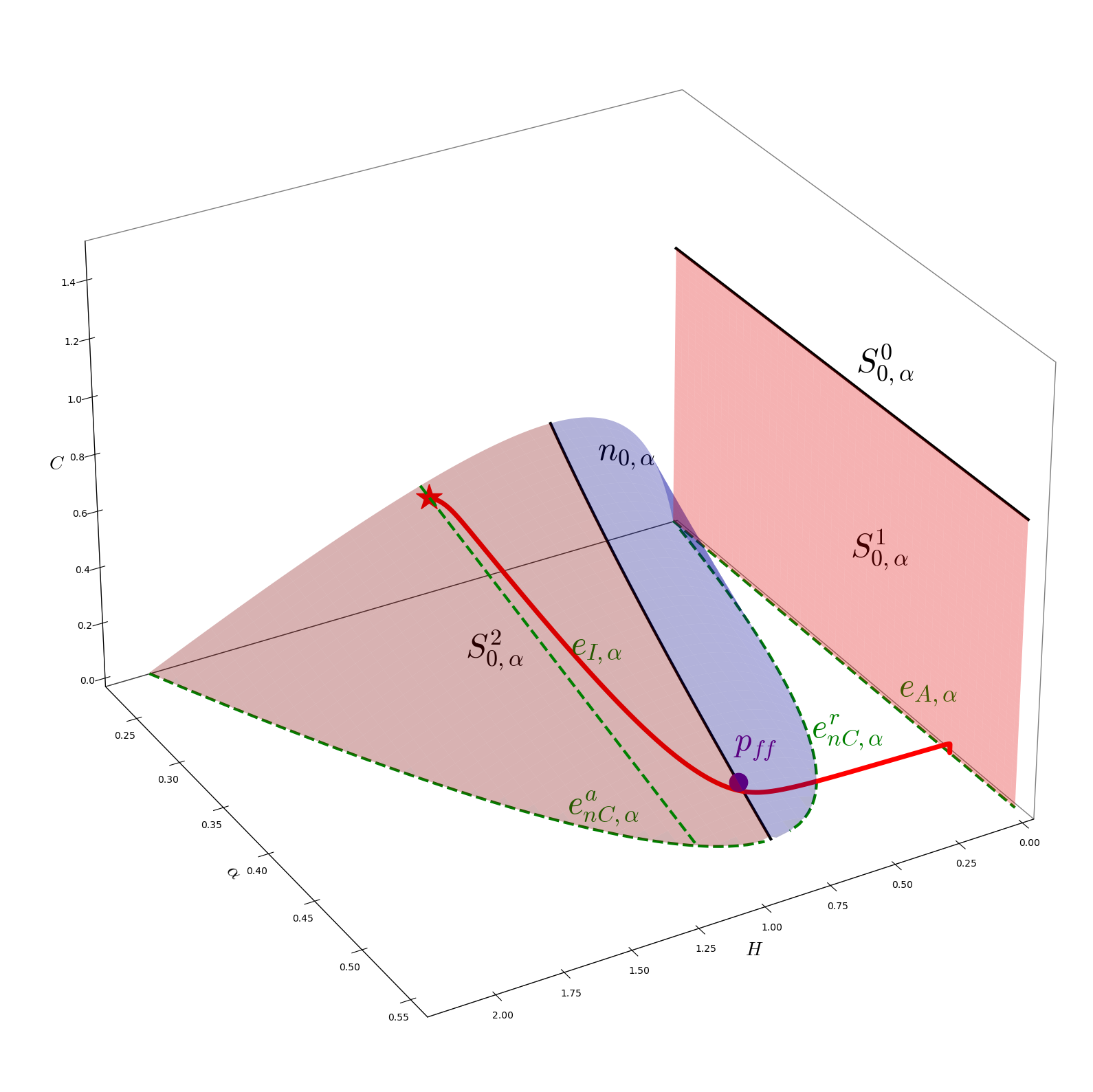}
    \includegraphics[scale = 0.2]{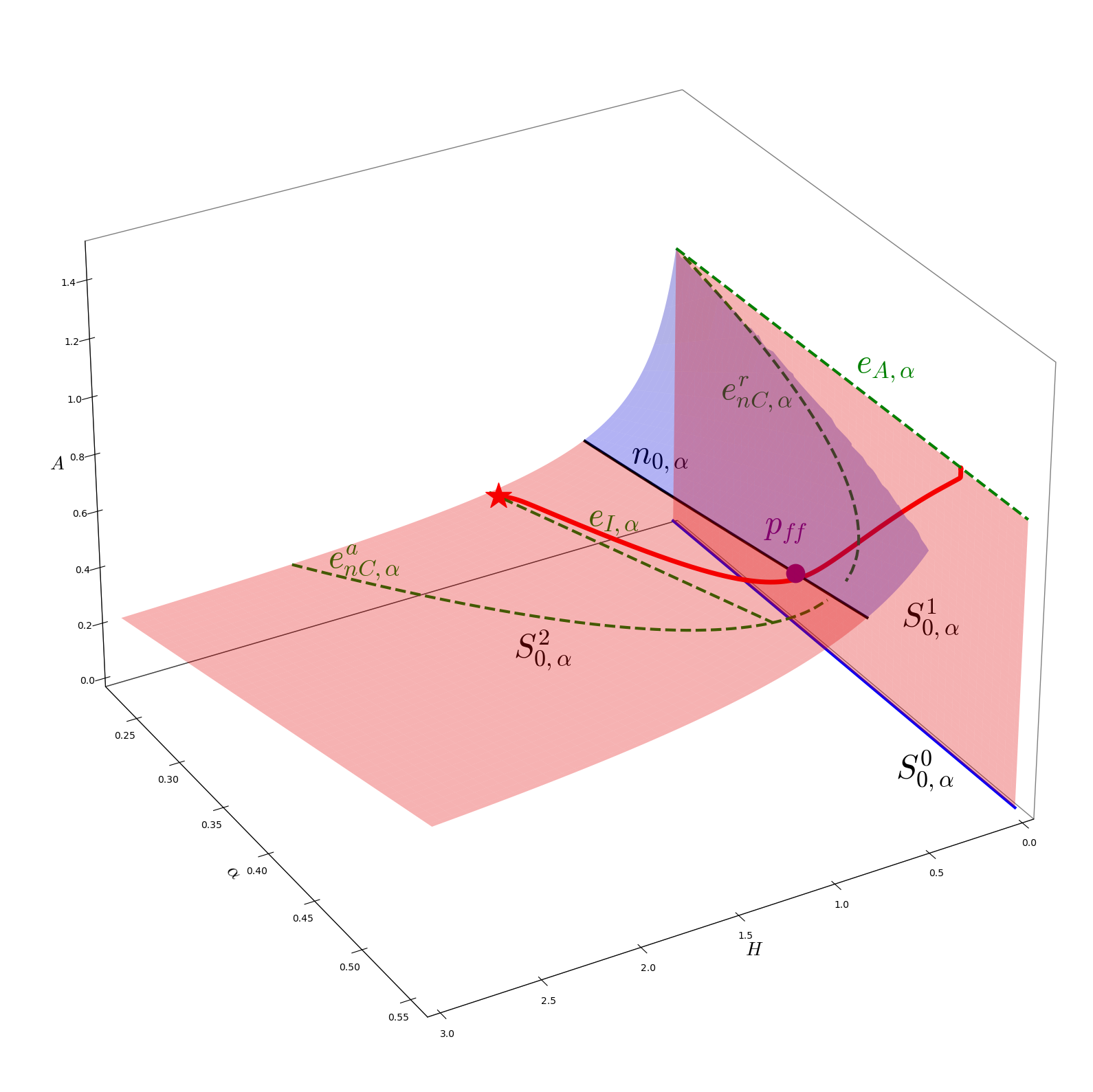}
    \caption{Jump-induced R-tipping in Region $\mathrm{III}$, with $\beta=0.3$, $\lambda = 0.4$, and $d=0.22$, for $\alpha\in [d+\delta,\alpha^+-\delta]$. Here, we have taken $\varepsilon=0.01$ and $r = 4\times10^{-3}$. The red star represents the initial condition, chosen such that the trajectory originates at the stable coexistence state $e_{I,\alpha}$, with $\alpha = d+\delta$ initially.}
    \label{fig:focus jump trajectory}
\end{figure}

In summary, Region $\mathrm{III}$ is hence naturally divided into two subregions, which we label Region $\mathrm{III}(a)$ and Region $\mathrm{III}(b)$; these are defined by $\alpha_{FS1,r}<\alpha^+$ and $\alpha_{FS1,r}>\alpha^+$, respectively, and are illustrated in Figure~\ref{fig:All regions} for $r=4\times 10^{-3}$. By the above, it is possible to observe R-tipping of jump type in Region $\mathrm{III}(a)$, whereas in Region $\mathrm{III}(b)$, the flow will not tip, but will remain on $S^{2,a}_{\varepsilon,\alpha}$, and will track the coexistence state $e_{I,\alpha}$; recall Figures~\ref{fig:focus jump trajectory} and \ref{fig:alpha_hat trajectory}, respectively.

\begin{figure}[h]
    \centering
     \includegraphics[scale = 0.7]{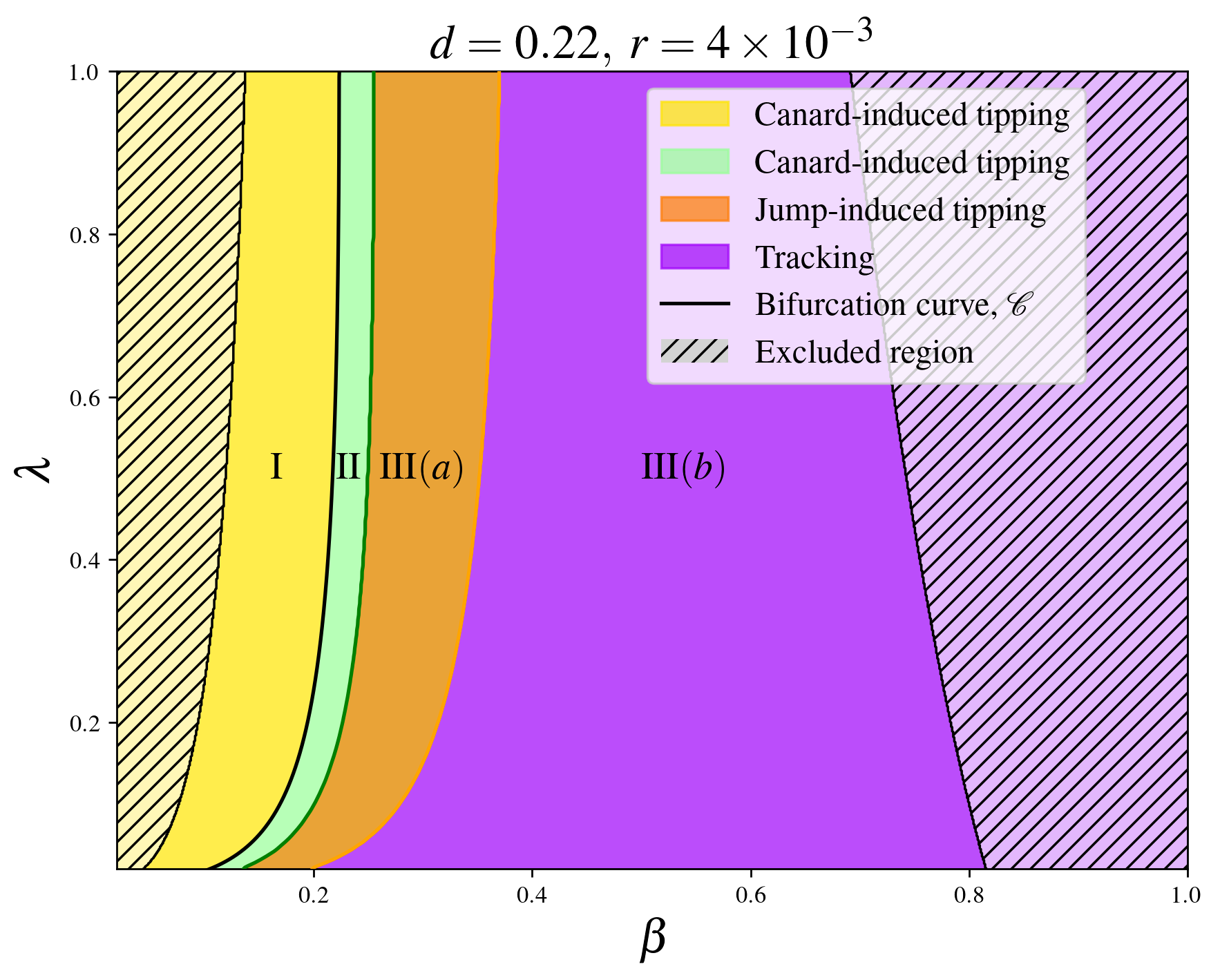}
    \caption{Summary classification of the dynamics of Equation~\eqref{eq: full 4D system} in dependence of $(\beta,\lambda)\in(0,1)^2$ in the absence of bifurcation, with $d=0.22$ and $r=4\times 10^{-3}$ fixed. Canard-induced R-tipping is observed in Region $\mathrm{I}$ (yellow) and Region $\mathrm{II}$ (green). Region $\mathrm{III}$ is subdivided into Region $\mathrm{III}(a)$ (orange) and Region $\mathrm{III}(b)$ (purple) in which one observes jump-induced R-tipping and tracking, respectively.}
    \label{fig:All regions}
\end{figure}

Finally, we illustrate that the dynamics of Equation~\eqref{eq: full 4D system} is somewhat robust to the choice of $\alpha_{\max,\delta}$. For illustration, we choose $\alpha_{\max,\delta}>\hat\alpha(>\alpha^+)$ and $r=1\times10^{-5}$ small enough such that R-tipping is not observed. Figure~\ref{fig:focus hat trajectory} implies that the flow simply tracks the coexistence state $e_{I,\alpha}$ until the transcritical bifurcation at $\alpha^{+}$, where $e_{I,\alpha}$ exchanges stability with $e_{nC,\alpha}^a$. The trajectory then follows $e_{nC,\alpha}^a$ until the saddle-node bifurcation at $\hat \alpha$, where the coral-free equilibria $e_{nC,\alpha}^a$ and $e_{nC,\alpha}^r$ coalesce. For $\alpha>\hat\alpha$, given the invariance of $\{C=0\}$, the trajectory is then attracted to the only remaining attractor, the algae-only state $e_{A,\alpha}$. A rigorous proof would require us to desingularise the dynamics in the neighbourhood of the fully degenerate state $\hat{e}_\alpha=\Big(\hat{H}, \hat{\alpha}, 0, \frac{\hat{\alpha}}{s(\hat{H})}\Big)$, where $\hat{\alpha}=\Pi(\hat{H})$ is as defined in Lemma~\ref{lem: S_0^2 intersections}; recall Remark~\ref{degeneracy}. However, we conjecture that desingularisation would reveal a standard fold-like jump point \cite{fold} in the invariant hyperplane $\{C=0\}$, as indicated in Figure~\ref{fig:focus hat trajectory}.

\begin{figure}[h]
    \centering
     \includegraphics[scale = 0.2]{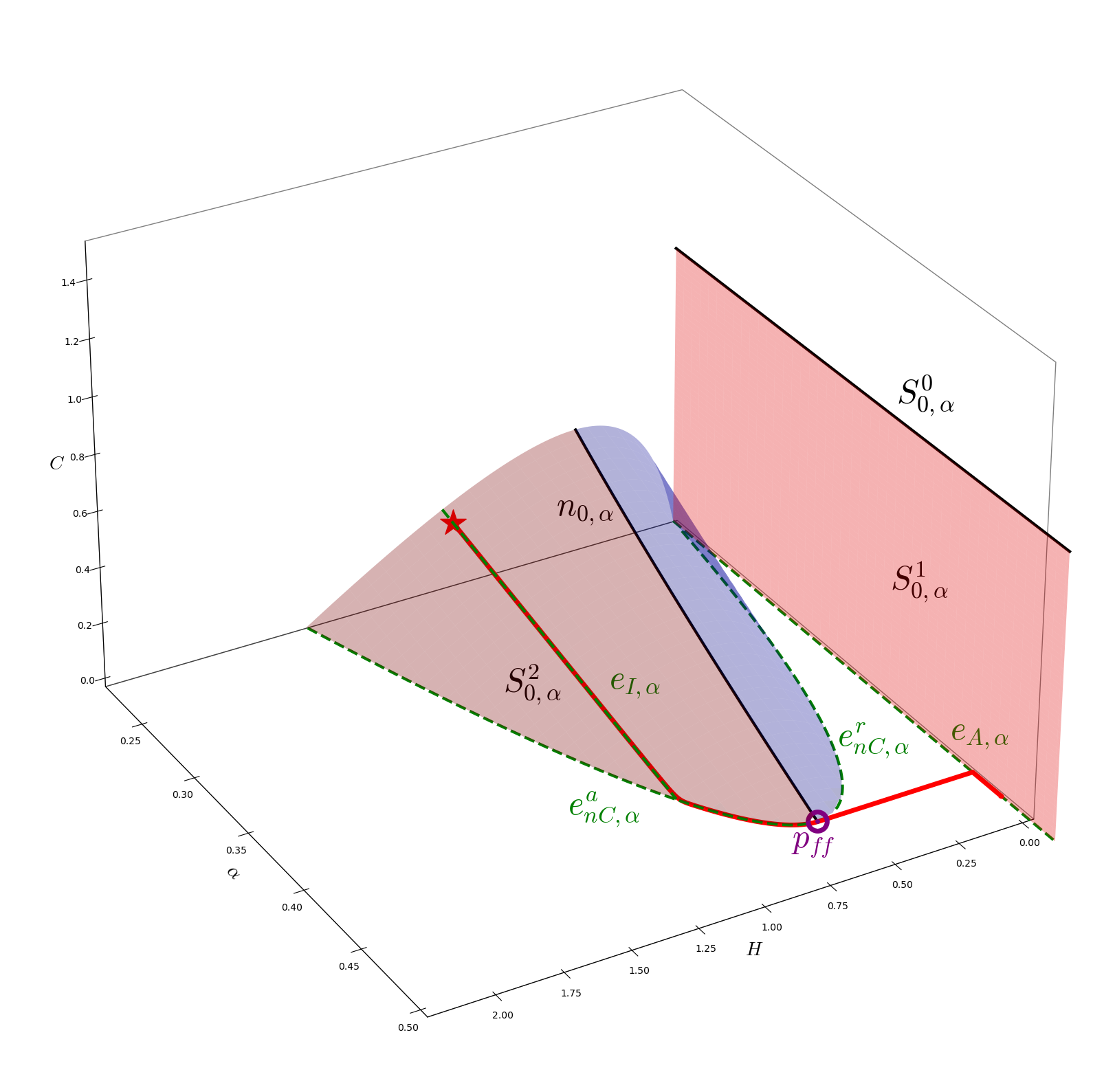}
    \includegraphics[scale = 0.2]{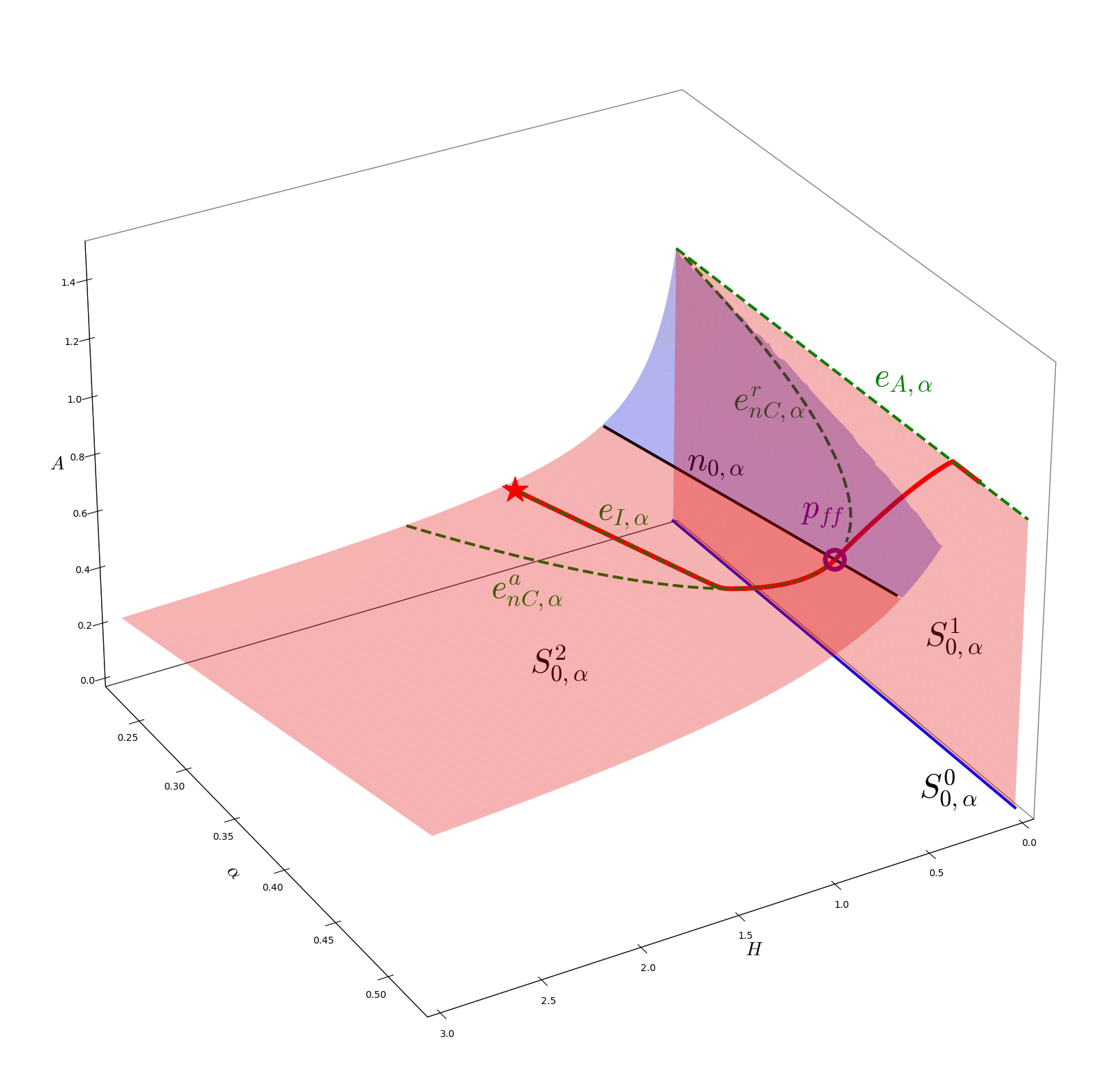}
    \caption{Integration beyond $\hat\alpha>\alpha^+$ in Equation~\eqref{eq: full 4D system}, with $\beta=0.4$, $\lambda=0.6$, and $d=0.22$, for $\alpha\in [d+\delta,0.49]$ such that $\alpha$ is allowed to exceed $\hat{\alpha}\approx0.4676$. Here, we have taken $\varepsilon=0.01$ and $r=1\times10^{-5}$. The trajectory tracks $e_{I,\alpha}$ and $e_{nC,\alpha}^a$ until their respective bifurcations at $\alpha^+$ and $\hat\alpha$, respectively, before tipping towards $e_{A,\alpha}$. The red star represents the initial condition, chosen such that the trajectory originates at the stable coexistence state $e_{I,\alpha}$, with $\alpha = d+\delta$ initially.}
    \label{fig:focus hat trajectory}
\end{figure}

\section{Discussion}
In this article, we have applied geometric singular perturbation theory (GSPT) to analyse a demographic model for a reef ecosystem consisting of fish, algae, and coral, Equation~\eqref{PNAS Model}, that was originally introduced in \cite{gil2020}. 

In Section~\ref{section GPST}, we have identified a separation of time scales in \eqref{PNAS Model}, and we have described the singular geometry of the resulting $(2,1)$ fast-slow system, Equation~\eqref{fast-system}. We have then given a complete classification of the bifurcation structure of that system in terms of the (non-dimensionalised) fishing effort $\alpha$. For $\alpha>d$ in \eqref{fast-system}, we observe three $\alpha$-regimes, each of which characterises a different ecological scenario. In particular, we have concluded that, in the ecologically relevant regime where $d<\alpha<\min\{\alpha^+,\alpha^{\ast}\}$, \eqref{fast-system} exhibits bistability, with both the coexistence state $e_I$ and the algae-only state $e_A$ being local attractors. 

In Section~\ref{section R-tipping}, we have formulated an extended, ``ramped" version of \eqref{fast-system} in which the fishing effort $\alpha$ is an additional state variable, by appending the equation $\frac{\dd \alpha}{\dd t}=r$; recall Equation~\eqref{eq: full 4D system}. We have shown that, for $d<\alpha(\tau)<\alpha_{\max}\equiv \min \{\alpha^+, \alpha^{\ast}\}$, the rate of change $r$ of the fishing effort $\alpha$ plays a critical role for the long-term sustainability of the ecosystem. Specifically, we have demonstrated that a slow increase in the fishing effort $\alpha$ can lead to rate-induced tipping (``R-tipping"), whereby the populations of fish and coral collapse while the population of algae blooms. We emphasise that this regime shift, from stable coexistence to an algal bloom, is not due to a classical bifurcation (``B-tipping"), as the fishing effort itself, rather than its rate of change, would need to exceed a certain threshold in that scenario. Correspondingly, \eqref{eq: full 4D system} remains bistable throughout, with both the coexistence state and the algae-only state being locally attracting for any $\alpha\in(d,\alpha_{\rm max})$. 

Modifying the methodology introduced in \cite{Plankton-tipping,vanselow_2019_when}, where  planar fast-slow systems were shown to undergo rate-induced tipping due to a slowly evolving parameter, we have given a detailed description of the folded geometry of the extended critical manifold for \eqref{eq: full 4D system}. In particular, we have concluded that tipping will typically occur due to the presence of a folded node-type canard (``Region $\mathrm{I}$" and ``Region $\mathrm{II}$"); less typically, trajectories may tip via a fast jump past a folded focus singularity (``Region $\mathrm{III}$").

While our analysis is hence based on previous studies of canard-induced R-tipping in singularly perturbed systems of fast-slow type \cite{Plankton-tipping,vanselow_2019_when}, our findings differ from theirs in two significant respects.

First, in both \cite{Plankton-tipping,vanselow_2019_when}, a folded saddle-type canard was shown to give rise to rate-induced tipping; as is well known, folded saddles generate unique (strong) canards which then act as ``thresholds", with trajectories to either side of them either tracking or tipping. Furthermore, trajectories that tip due to a folded saddle canard do so via a jump point past the threshold. By contrast, in our model, canard-induced tipping occurs due to a folded node-type canard, resulting in qualitatively very different dynamics: a folded node imposes a ``funnel"-type structure on the corresponding critical manifold; furthermore, in addition to the two primary canards corresponding to the eigendirections of the linearsation at the folded node, one observes bifurcating (``secondary") canards which subdivide (part of) the funnel into so-called ``sectors of rotation" \cite{MR2136520}. Dynamically, it hence follows that, in our model, trajectories which are initiated close to the coexistence state will enter the funnel, and will hence tip by following the ``weak" canard, possibly undergoing subthreshold oscillation by passing through sectors of rotation. In sum, any trajectory that tips in the presence of a folded node will be of canard type, rather than of jump type.

Bifurcations of folded node-type canards in systems where more than two distinct scales occur have received substantial interest in the recent literature \cite{6c8c3bfcb06e4fb8b0cd94ae49d0c15b}; in such systems, folded nodes typically degenerate to folded saddle-nodes which can give rise to complex oscillatory dynamics, such as mixed-mode oscillation \cite{MR2916308}. Related phenomena have been reported that arise due to delayed Hopf bifurcation in models of fast-slow-``super-slow" type in which the dynamics evolves on three distinct scales \cite{de2016sector}. As Equation~\eqref{eq: full 4D system} is of that type for $r$ sufficiently small, it is unsurprising that we observe delayed-Hopf-type dynamics then. Future work could consider the three-scale dynamics of \eqref{eq: full 4D system} more extensively and rigourously.

To the best of our knowledge, our study represents the first instance in the literature where rate-induced tipping occurs due to a folded node-type canard. (On a more minor note, the rate-induced tipping of jump type in our model that is due to the presence of a folded focus also appears to be novel.) While R-tipping in the demographic model studied here has been identified previously by Gil et al. \cite{gil2020}, the underlying geometric mechanisms were not elucidated systematically, nor was a separation of scales considered explicitly.

A second significant difference to previous work is that our basic model, with $\alpha$ a fixed parameter, is a three-dimensional bistable system; by contrast, in \cite{Plankton-tipping,vanselow_2019_when}, the underlying models are both planar and monostable. The existence theory for canards was originally developed in three dimensions and, specifically, for $(1,2)$ fast-slow systems; see \cite[Theorem $4.1$]{SZMOLYAN2001419}.
As \eqref{eq: full 4D system} is four-dimensional, with a $(2,2)$ fast-slow structure, the existence of canards follows from more recently developed results \cite[Theorem $4.2$]{Canards}. Correspondingly, our study evidences clear differences in rate-induced tipping between monostable and bistable systems. In \cite{vanselow_2019_when}, trajectories initialised near the coexistence equilibrium of their Equation~\eqref{Vanselow} tip in the form of a canard trajectory which initially appears to have driven both populations to extinction. However, as the interior equilibrium is the unique attractor there, ``resurgence" occurs even for extremely small prey densities -- of the order $10^{-12}$. Hence, the populations of both prey and predator recover and again reach stable coexistence. 
In our case, however, the bistability of \eqref{eq: full 4D system} implies that tipping trajectories will not experience resurgence but, rather, that they will converge to the alternative local attractor. That lack of resurgence is ecologically realistic, as one would expect populations to go extinct at low densities due to stochastic effects.

On a related note, one may ask whether a population collapse due to rate-induced tipping is reversible. Specifically, can the flow of \eqref{eq: full 4D system} ever return to stable coexistence following an algal bloom? In practice, fishing can be halted near-instantaneously in an attempt to allow the populations of fish and coral to recover; mathematically, the dimensionless fishing effort $\alpha=\frac{\mu+f}{r_A}$ can be reduced to its value pre-fishing by setting the mortality rate of fish due to fishing to zero ($f=0$). (Note that we are now considering \eqref{fast-system} instead of the ``ramped" system, Equation~\eqref{eq: full 4D system}, as we are taking $\alpha(\tau)\equiv\frac{\mu}{r_A}$ constant.)
By Table~\ref{param_table} $\alpha=\frac{\mu}{r_A}$ is maximised at $0.02<d=0.22$. It follows from Lemma~\ref{lemma: Stability of Critical Manifold} that the algae-only state then lies on a repelling portion of $S_0^1$, which implies that the only remaining attractor for \eqref{fast-system} is the coexistence state. Figure~\ref{fig:resurgence} visualises this scenario, where population collapse occurs due to rate-induced tipping; however, an instantaneous reduction in $\alpha$ results in the resurgence of fish and coral populations.

\section{Acknowledgements}
BH and ZS were supported by the EPSRC Centre for Doctoral Training in Mathematical Modelling, Analysis
and Computation (MAC-MIGS), funded by the UK Engineering and Physical Sciences Research
Council (grant EP/S023291/1, Heriot-Watt University and the University of Edinburgh). This study is based on a dissertation that had been submitted
by IA for the degree of an MSc in Computational Applied Mathematics at the University of Edinburgh in 2025.

\begin{figure}
    \centering
    \includegraphics[width=0.5\linewidth]{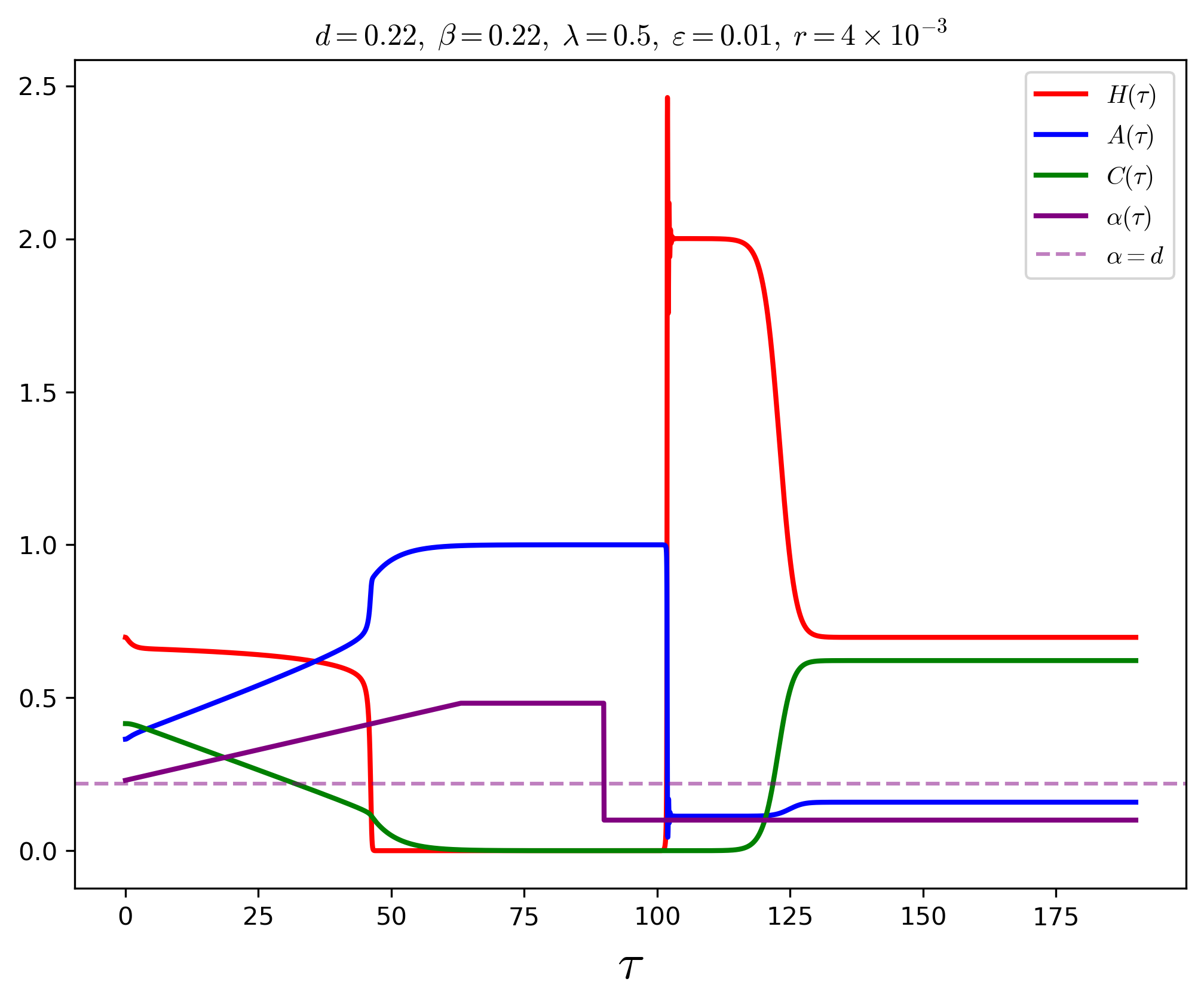}
    \caption{Time series plot of ``resurgence" in Equation~\eqref{eq: full 4D system}, with $\alpha(\tau)$ reset to $\alpha<d$ once population collapse has occurred.}
    \label{fig:resurgence}
\end{figure}

\appendix
\section{Proof of Lemma~\ref{lemma: Stability of Critical Manifold}}
\label{sec: stability of S_0}
In this section, we give the proof of Lemma~\ref{lemma: Stability of Critical Manifold}.

To prove that the function $Q(H;\alpha) = \lambda \big[H+s(H)(1+H)^2\big]-\frac{\alpha}{s(H)^2}$ admits a unique root $H_0$ under the assumption that $\alpha> \lambda d^3$, we first show that $Q$ is a monotonically increasing function in $H$: since $s'(H)=(1+H)^{-2}$ implies
\begin{equation}
    \frac{d Q(H;\alpha)}{dH} = 2\lambda (d+1)(1+H)+\frac{2\alpha}{(1+H)^2s(H)^3}>0,
\end{equation}
$Q(H;\alpha)$ is increasing for all $H \geq 0$. We have $Q(0;\alpha) = \lambda d-\frac{\alpha}{d^2}<0$, by our assumption; furthermore, as $s(H)\in[d,d+1]$ is bounded, $Q(H;\alpha)\to \infty$ as $H{\to \infty}$. Therefore, $Q(H;\alpha)$ has a unique root, denoted by $H_0$, which implies item 1.   

To investigate the stability of the three sub-manifolds $S_0^0$, $S_0^1$, and $S_0^2$ of $S_0$, we calculate the Jacobian $\mathcal{J}$ of the linearisation of the layer problem, Equation~\eqref{layer-problem}, with respect to the fast variables $H$ and $A$: 
\begin{align*}
    \mathcal{J} = \begin{pmatrix}
        \lambda\Big\{\Big[s(H)+\frac{H}{(1+H)^2}\Big]A-\alpha\Big\} &\lambda H s(H) \\
        -\lambda A\Big[s(H)+\frac{H}{(1+H)^2}\Big] & 1-2A-C-\lambda Hs(H)
    \end{pmatrix}.
\end{align*}
First, evaluating $\mathcal{J}$ on $S_0^0$, we have
\begin{align*}
    \mathcal{J}_{S_0^0} = \begin{pmatrix}
        -\lambda\alpha & 0 \\
        0 & 1-C
    \end{pmatrix},
\end{align*}
which has one non-negative eigenvalue for $0\leq C\leq 1$. Hence, the sub-manifold $S_0^0$ is normally repelling when $0\leq C<1$, losing normal hyperbolicity through the transcritical singularity $n_1$ at $C=1$, which proves item 2 in Lemma~\ref{lemma: Stability of Critical Manifold}.
Similarly, we evaluate $\mathcal{J}$ on $S_0^1$ to find
\begin{align*}
    \mathcal{J}_{S_0^1} = \begin{pmatrix}
        \lambda(dA-\alpha) &0 \\
        -\lambda A d & -A
    \end{pmatrix},
\end{align*}
which is a lower triangular matrix whose eigenvalues are simply the entries on its diagonal. As the eigenvalue $-A(=-1+C)$ is always negative for $A>0$ or, equivalently for $C<1$, the stability of $S_0^1$ is determined by the sign of $dA-\alpha$. Hence, it follows that $S_0^1$ is normally attracting for $A<\frac{\alpha}{d}$ and normally repelling for $A>\frac{\alpha}{d}$, losing normal hyperbolicity in the transcritical singularity $n_2$, which shows item 3.  

Finally, the Jacobian $\mathcal{J}$ evaluates to
\begin{align*}
    \mathcal{J}_{S_0^2} = \begin{pmatrix}
        \frac{\lambda\alpha H}{s(H)(1+H)^2} & \lambda H s(H) \\
        -\lambda\alpha\Big[1+\frac{H}{s(H)(1+H)^2}\Big] & -\frac{\alpha}{s(H)}
    \end{pmatrix}
\end{align*}
on $S_0^2$. Normal hyperbolicity is then most easily determined from the trace and determinant of that matrix: we have
\begin{align*}
    \mathrm{tr}\big(\mathcal{J}_{S_0^2}\big) = \frac{\alpha}{s(H)}\bigg[\frac{\lambda H}{(1+H)^2}-1\bigg],
\end{align*}
which is negative for $\lambda <4$,  and
\begin{align*}
    \det\big(\mathcal{J}_{S_0^2}\big) &=-\frac{\alpha^2\lambda H}{s(H)^2(1+H)^2}+\lambda^2\alpha H\bigg[s(H)+\frac{H}{(1+H)^2}\bigg] \\
    &\equiv \frac{\lambda \alpha H}{(1+H)^2}Q(H;\alpha),
\end{align*}
where $Q(H;\alpha) = \lambda \big[H+s(H)(1+H)^2\big]-\frac{\alpha}{s(H)^2}$, as before.
For $\lambda < 4$, the stability of $S_0^2$ is therefore decided by the sign of $Q(H;\alpha)$. Since we assume that $\alpha>\lambda d^3$, it again follows that $Q(H;\alpha)$ is negative for $0 <H<H_0$ and that the manifold $S_0^2$ is normally repelling (of saddle type) in that case. For $H>H_0$, $Q(H;\alpha)$ is positive; hence, $S_0^2$ is normally (focally or nodally) attracting then, with hyperbolicity being lost at $H=H_0$, as stated in item 4. 

To show that $H=H_0$ maximises the curve $C=1-\frac{\alpha}{s(H)}-\lambda H s(H)$, we note that $\frac{\dd C}{\dd H}=-\frac{1}{(1+H)^2}Q(H;\alpha)$, which implies that $\frac{\dd C}{\dd H}=0$ is equivalent to $Q(H;\alpha)=0$ and, hence, that $C(H)$ has a unique critical point at $H=H_0$ for $H\in[0, \infty)$. Furthermore, 
$\frac{d^2 C}{dH^2}=-2\Big\{\frac{\lambda}{(1+H)^3}+\frac{\alpha(1+d)}{[d+(1+d)H]^3} \Big\}<0$, which completes the proof of Lemma~\ref{lemma: Stability of Critical Manifold}.

\section{Proof of Proposition~\ref{prop: equilibrium stability}}
\label{sec: Stability of internal equilibrium}
Recall that the reduced flow on $S^2_0$ is given by 
\begin{align*}
    \frac{d H}{d \tau} = -\frac{(1+H)^2}{Q(H;\alpha)}\bigg(1-\frac{\alpha}{s(H)}-\lambda Hs(H)\bigg)(\lambda Hs(H)-\beta),
\end{align*}
see Equation~\eqref{reduced flow}, where $\lambda Hs(H)-\beta =0$ for $H=H_I$ and $1-\frac{\alpha}{s(H)}-\lambda Hs(H)=0$ for $H\in\big\{H_{nC}^a,H_{nC}^r\big\}$. Moreover, it follows from Lemma~\ref{lemma: Stability of Critical Manifold} and Lemma~\ref{lem: S_0^2 intersections} that $H_{nC}^r<H_0<H_{nC}^a$ for $1-\lambda d^2>0$.
We first consider the relative positions of the points $e_{I}$, $e_{nC}^a$, and $e_{nC}^r$ in each $\alpha$-regime.
For $\alpha<\min\{\alpha^+, \alpha^\ast\}$, we have to show $H_{nC}^r<H_0<H_I<H_{nC}^a$. To infer that $H_0<H_I$ when $\alpha<\alpha^\ast$, we first note that $\alpha^\ast$ is defined so that $Q(H_I;\alpha^\ast)=0$. Then, $\frac{\partial}{\partial \alpha}Q(H_I; \alpha)=-\frac{1}{s(H_I)^2}<0$ implies that $\alpha^{\ast}$ is unique.
Similarly, by Lemma~\ref{lemma: Stability of Critical Manifold}, the definition of $H_0$ implies $Q(H_0;\alpha)=0$. From the definition of $Q(H;\alpha)$, we have 
\begin{align*}
    \lambda s(H_0)^2 \big[s(H_0)(1+H_0)^2+H_0\big]=\alpha<\alpha^\ast=\lambda s(H_I)^2\big[s(H_I)(1+H_I)^2+H_I\big].
\end{align*}
Due to the strict monotonicity of $\lambda s(H)^2 \big[s(H)(1+H)^2+H\big]$ in $H$, we conclude that $H_0<H_I$ when $\alpha<\alpha^\ast$. Note that, correspondingly, we have $H_I<H_0$ ($H_I=H_0$) for $\alpha^\ast<\alpha$ ($\alpha=\alpha^\ast$).

Given that $H_0<H_I$ when $\alpha<\alpha^\ast$, we will additionally show that $H_I<H_{nC}^a$ for $\alpha<\alpha^+$. To begin, note that $C_I=0$ when $\alpha=\alpha^+$, which implies that $H_I=H_{nC}^a$ or $H_I=H_{nC}^r$. However, for $\alpha<\alpha^\ast$, $H_{nC}^r<H_0<H_I$ and, therefore, $H_I=H_{nC}^a$ when $\alpha=\alpha^+$. It follows that $H_I<H_{nC}^a$ $(H_{nC}^a<H_I)$ for $\alpha<\alpha^+$ $(\alpha^+<\alpha)$.

To summarise, we have the following cases:
\begin{enumerate}
    \item For $\alpha<\text{min}\{\alpha^+, \alpha^\ast\}$, $H_{nC}^r<H_0<H_I<H_{nC}^a$.
    \item For $\alpha^+<\alpha<\alpha^\ast$, $H_{nC}^r<H_0<H_{nC}^a<H_{I}$.
    \item For $\alpha^\ast<\alpha<\alpha^+$, $H_{nC}^r<H_I<H_0<H_{nC}^a$.
\end{enumerate}
Next, we determine the stability of $e_I$ by calculating
\begin{align*}
\frac{\partial}{\partial H}\Big(\frac{\dd H}{\dd\tau}\Big)\bigg\lvert_{H=H_I} =-\frac{(1+H_I)^2}{Q(H_I;\alpha)}\bigg(1-\frac{\alpha}{s(H_I)}-\lambda H_Is(H_I)\bigg)\lambda\bigg[s(H_I)+\frac{H_I}{(1+H_I)^2}\bigg],
\end{align*}
where we have again used $\lambda H_Is(H_I)-\beta=0$.
The first bracketed factor in the numerator equals $C_I$, by definition, which is positive for $\alpha<\alpha^+$; similarly, the second bracketed factor is always positive. Hence, the stability of $e_I$ is determined by the sign of $Q(H_I;\alpha)$. Specifically, $e_I$ is attracting provided $Q(H_I;\alpha)>0$, i.e., for
\begin{align*}
    \lambda \big[s(H_I)(1+H_I)^2+H_I\big]-\frac{\alpha}{s(H_I)^2}>0, 
\end{align*}
which is equivalent to $\alpha < \alpha^\ast=\lambda s(H_I)^2\big[s(H_I)(1+H_I)^2+H_I\big]$, showing that $e_I$ is an attracting equilibrium for the reduced flow on $S_0^2$ when $\alpha<\text{min}\{\alpha^+, \alpha^\ast\}$. It follows that $e_{I}$ is repelling for $\alpha^\ast<\alpha$. Finally, for $\alpha=\alpha^\ast$, $H_I=H_0$ and $1-\frac{\alpha^\ast}{s(H_0)}-\lambda H_0 s(H_0)=0$. It therefore follows from L'H\^opital's Rule that $\frac{\partial}{\partial H}\big(\frac{\dd H}{\dd\tau}\big)\big\lvert_{H=H_0}=0$, implying a loss of hyperbolicity.

We now consider the stability of the remaining equilibria $H^\ast$ on $S_0^2$, where $H^\ast\in\big\{H_{nC}^a, H_{nC}^r\big\}$: we find
\begin{align*}
\frac{\partial}{\partial H}\Big(\frac{\dd H}{\dd\tau}\Big)\bigg\lvert_{H=H^\ast} &=-(1+H^\ast)^2\frac{-\frac{Q(H^\ast;\alpha)}{(1+H^\ast)^2}(\lambda H^\ast s(H^\ast)-\beta)}{Q(H^\ast;\alpha)} \\
&=\lambda H^\ast s(H^\ast)-\beta,
\end{align*}
as $1-\frac{\alpha}{s(H^\ast)}-\lambda H^\ast s(H^\ast)=0$. With $\lambda H s(H)-\beta =0$ for $H=H_I$, it follows by monotonicity that $\lambda H_{nC}^a s(H_{nC}^a)-\beta>0$ when $H_{nC}^a>H_I$, which is equivalent to $\alpha>\alpha^\ast$. Similarly, $\lambda H_{nC}^a s(H_{nC}^a)-\beta<0$ when $H_{nC}^a<H_I$, or $\alpha<\alpha^\ast$. For any $\alpha$, $H_{nC}^r< H_I$ and, therefore, $\lambda H_{nC}^r s(H_{nC}^r)-\beta< 0$, which completes the proof. 
\section{Proof of Lemma \ref{lem: desingularised system}}
\label{sec: desingularised system proof}
As $n_{0,\alpha}$ is given by a curve $\alpha(H)$, we can solve for the equilibria of \eqref{eq: Desingularised system}, which we will denote by $p_{FS}=(H_{FS},\alpha_{FS})$; these equilibria satisfy $Q(H_{FS},\alpha_{FS})=0$ and, hence,
\begin{align}
    \alpha_{FS} = \lambda s(H_{FS})^2\big[H_{FS}+s(H_{FS})(1+H_{FS})^2\big].
    \label{eq: alpha FS equation}
\end{align}
Substituting into the definition of $\Lambda$, one finds that $H_{FS}$ solves the following relation,
\begin{equation}
    F(H_{FS},r;\beta,\lambda,d):=\big[1-2\lambda H_{FS}s(H_{FS})-\lambda s(H_{FS})^2(1+H_{FS})^2\big](\lambda H_{FS}s(H_{FS})-\beta)+\frac{r}{s(H_{FS})} = 0.
    \label{eq: Folded sinularity steady state}
\end{equation}
We aim to apply the Implicit Function Theorem in a neighbourhood of the roots of \eqref{eq: Folded sinularity steady state} for $r=0$. For brevity, we write $F(H,0;\beta,\lambda,d) = u(H)v(H)$, with 
\begin{align*}
    u(H) &= 1-2\lambda Hs(H)-\lambda s(H)^2(1+H)^2\quad\text{and} \\
    v(H) &= \lambda H s(H)-\beta.
\end{align*}
Hence, $F(H,0;\beta,\lambda,d)$ is only zero when either $u=0$ or $v=0$. We see that for $H>0$, $u$ is monotonically decreasing in $H$, while $v$ is monotonically increasing, as
\begin{align*}
    u'(H) &= \frac{-2\lambda H}{(1+H)^2}-4\lambda s(H)-2\lambda s(H)^2(1+H)<0\quad\text{and} \\
    v'(H) &= \lambda\bigg[s(H)+\frac{H}{(1+H)^2}\bigg].
\end{align*}
Then, as $u(0)=1-\lambda d^2>0$ and $v(0)=-\beta<0$, and as $u(H)\to -\infty$ and $v(H)\to\infty$ for $H\to\infty$, by monotonicity there exist at most two roots to the expression $F(H,0;\beta,\lambda,d)=0$. We will denote these roots by $H_{FS1,0}$ and $H_{FS2,0}$, with $u(H_{FS1,0})=0$ and $v(H_{FS2,0})=0$, respectively. The two roots will generically be distinct, as only one of them is dependent on $\beta$. (Note that they are equal along the bifurcation curve $\mathcal{C}$; correspondingly, the Implicit Function Theorem is not applicable then.)

Next, we note that $\Pi(H)$ as given in Lemma~\ref{lem: S_0^2 intersections} is maximised at $\hat{H}$, which satisfies $\Pi'(\hat{H})=0$. A direct calculation shows that
\begin{align*}
    \Pi'(H) = \frac{1}{(1+H)^2}\big[1-2\lambda H s(H)-\lambda s(H)^2(1+H)^2\big] = \frac{1}{(1+H)^2}u(H).
\end{align*}

Then, $H_{FS1,0}$ corresponds to the value of $H$ where the saddle-node bifurcation occurs between $e^{a}_{nC}$ and $e^{r}_{nC}$ when $\alpha=\hat\alpha$; recall Lemma~\ref{lem: S_0^2 intersections}. In particular, $H_{FS1,0}= \hat{H}$, as defined there. Finally, $H_{FS2,0}=H_I$, by the definition of $v(H)$. The corresponding $\alpha_{FS}$-values may be obtained from $\eqref{eq: alpha FS equation}$. 

Now, calculating the derivative of $F$ with respect to $H$, we find 
\begin{align}
    \frac{\partial F}{\partial H} = u (H)v'(H) + u'(H)v(H) -\frac{r}{(1+H)^2}.
    \label{eq: F deriv x equation}
\end{align}
In particular, as $u'(H),v'(H)\neq 0$ for $H>0$, it follows that $\frac{\partial F}{\partial H}\rvert_{(H_{FSi},0)}\neq 0$ for $i=1,2$. Hence, applying the Implicit Function Theorem, we can conclude that there exist functions $H_i(r)$ ($i=1,2$) such that $F(H_i(r),r;\beta,\lambda,d)=0$ for $r$ sufficiently small. We denote these two roots (in $H$) by $H_{FS1,r}$ and $H_{FS2,r}$, which concludes the proof of item 1.

We now determine how the values of these roots change as $r$ is varied. To begin, we take the total derivative of $F(H_i(r),r)=0$ with respect to $r$:

\begin{align*}
    \frac{\partial F}{\partial H_i}(H_i(r),r)\frac{\dd H_i}{\dd r}+\frac{\partial F}{\partial r} (H_i(r),r)=0.
\end{align*}
(Here, we have suppressed the dependence of $F$ on $(\beta,\lambda,d)$ for simplicity of notation.)
Solving and evaluating at $r=0$, we obtain
\begin{align*}
    \frac{\dd H_i}{\dd r}\Big\rvert_{r=0} = -\frac{\frac{\partial F}{\partial r}(H_{FSi,0},0)}{\frac{\partial F}{\partial H_i} (H_{FSi,0},0)}
\end{align*}
Since $\frac{\partial F}{\partial r}(H_i(r),r) = \frac{1}{s(H_i(r))}>0$, the sign of $\frac{\dd H_i}{\dd r}\big\rvert_{r=0}$ will be determined by that of $\frac{\partial F}{\partial H_i} (H_{FSi,0},0)$. Evaluating \eqref{eq: F deriv x equation}, we have
\begin{align*}
    \frac{\partial F}{\partial H_i}(H_{FSi,0},0) = u(H_{FSi,0})v'(H_{FSi,0})+u'(H_{FSi,0})v(H_{FSi,0}).
\end{align*}
Without loss of generality, we first assume that $H_{FS1,0}<H_{FS2,0}$; then, evaluating at $i=1$, we find
\begin{align*}
    \frac{\dd H_1}{\dd r}\Big\rvert_{r=0} = -\frac{1}{s(H_{FS1,0})}\frac{1}{u'(H_{FS1,0})v(H_{FS1,0})},
\end{align*}
as $u(H_{FS1,0})=0$.
Then, $\frac{\dd H_1}{\dd r}\big\rvert_{r=0} <0$, as $u'(H)<0$ and $v(H_{FS1,0})<0$ if we assume that $H_{FS1,0}<H_{FS2,0}$. Similarly, $\frac{\dd H_2}{\dd r}\big\rvert_{r=0}>0$. (If the ordering of the roots is reversed, the result still holds, as $v(H_{FS1,0})>0$ in that case.)  

In sum, we have established that
\begin{align*}
    \min\{H_{FS1,r},H_{FS2,r}\} \downarrow\min\{H_{FS1,0},H_{FS2,0}\}
\end{align*}
as $r\to 0^+$, which shows item 2. Item 3 can be shown in a similar fashion.

Finally, for item 4, we note that $H_{FS1,0}=\hat{H}$ and $H_{FS2,0}=H_I$, as already shown, which concludes the proof.
\bibliography{bibliography}

\providecommand{\bysame}{\leavevmode\hbox to3em{\hrulefill}\thinspace}
\providecommand{\MR}{\relax\ifhmode\unskip\space\fi MR }
\providecommand{\MRhref}[2]{%
  \href{http://www.ams.org/mathscinet-getitem?mr=#1}{#2}
}
\providecommand{\href}[2]{#2}
\begin{thebibliography}{10}

\bibitem{ashwin_2012_tipping}
Peter Ashwin, Sebastian Wieczorek, Renato Vitolo, and Peter~Timothy Cox, \emph{Tipping points in open systems: bifurcation, noise-induced and rate-dependent examples in the climate system}, Philosophical Transactions of the Royal Society A \textbf{370} (2012), 1166--1184.

\bibitem{blackwood_2010_the}
Julie~C. Blackwood, Alan Hastings, and Peter~J. Mumby, \emph{The effect of fishing on hysteresis in caribbean coral reefs}, Theoretical Ecology \textbf{5} (2010), 105--114.

\bibitem{briggs_2018_macroalgae}
Cheryl~J. Briggs, Thomas~C. Adam, Sally~J. Holbrook, and Russell~J. Schmitt, \emph{Macroalgae size refuge from herbivory promotes alternative stable states on coral reefs}, PLOS ONE \textbf{13} (2018), e0202273.

\bibitem{de2016sector}
Peter De~Maesschalck, Ekaterina Kutafina, and Nikola Popovi{\'c}, \emph{Sector-delayed-hopf-type mixed-mode oscillations in a prototypical three-time-scale model}, Applied Mathematics and Computation \textbf{273} (2016), 337--352.

\bibitem{MR2916308}
Mathieu Desroches, John Guckenheimer, Bernd Krauskopf, Christian Kuehn, Hinke~M. Osinga, and Martin Wechselberger, \emph{Mixed-mode oscillations with multiple time scales}, SIAM Rev. \textbf{54} (2012), no.~2, 211--288. \MR{2916308}

\bibitem{fung_2011_alternative}
Tak Fung, Robert~M. Seymour, and Craig~R. Johnson, \emph{Alternative stable states and phase shifts in coral reefs under anthropogenic stress}, Ecology \textbf{92} (2011), 967--982.

\bibitem{gil2020}
Michael~A. Gil, Marissa~L. Baskett, Stephan~B. Munch, and Andrew~M. Hein, \emph{Fast behavioral feedbacks make ecosystems sensitive to pace and not just magnitude of anthropogenic environmental change}, Proceedings of the National Academy of Sciences \textbf{117} (2020), no.~41, 25580--25589.

\bibitem{gil_2017_social}
Michael~A. Gil and Andrew~M. Hein, \emph{Social interactions among grazing reef fish drive material flux in a coral reef ecosystem}, Proceedings of the National Academy of Sciences \textbf{114} (2017), no.~18, 4703--4708.

\bibitem{hughes_1994_catastrophes}
Terence~P. Hughes, \emph{Catastrophes, phase shifts, and large-scale degradation of a caribbean coral reef}, Science \textbf{265} (1994), 1547--1551.

\bibitem{MR2478556}
Arieh Iserles, \emph{A first course in the numerical analysis of differential equations}, second ed., Cambridge Texts in Applied Mathematics, Cambridge University Press, Cambridge, 2009. \MR{2478556}

\bibitem{6c8c3bfcb06e4fb8b0cd94ae49d0c15b}
Panagiotis Kaklamanos, Nikola Popovi{\'c}, and Kristian~Uldall Kristiansen, \emph{Bifurcations of mixed-mode oscillations in three-timescale systems: An extended prototypical example}, Chaos \textbf{32} (2022) (English).

\bibitem{fold}
Martin Krupa and Peter Szmolyan, \emph{Extending geometric singular perturbation theory to nonhyperbolic points---fold and canard points in two dimensions}, SIAM Journal on Mathematical Analysis \textbf{33} (2001), no.~2, 286--314.

\bibitem{MKrupa_2001}
\bysame, \emph{Extending slow manifolds near transcritical and pitchfork singularities}, Nonlinearity \textbf{14} (2001), no.~6, 1473.

\bibitem{MTSD}
Christian Kuehn, \emph{Multiple time scale dynamics}, Applied Mathematical Sciences, vol. 191, Springer, Cham, 2015.

\bibitem{mumby_2007_thresholds}
Peter~J. Mumby, Alan Hastings, and Helen~J. Edwards, \emph{Thresholds and the resilience of caribbean coral reefs}, Nature \textbf{450} (2007), 98--101.

\bibitem{doi:10.1137/S1064827594276424}
Lawrence~F. Shampine and Mark~W. Reichelt, \emph{The matlab ode suite}, SIAM Journal on Scientific Computing \textbf{18} (1997), no.~1, 1--22.

\bibitem{suding_2009_threshold}
Katharine~N. Suding and Richard~J. Hobbs, \emph{Threshold models in restoration and conservation: a developing framework}, Trends in Ecology \& Evolution \textbf{24} (2009), 271--279.

\bibitem{SZMOLYAN2001419}
Peter Szmolyan and Martin Wechselberger, \emph{Canards in $\mathbb{R}^3$}, Journal of Differential Equations \textbf{177} (2001), no.~2, 419--453.

\bibitem{TRUSCOTT1994981}
James~E. Truscott and John Brindley, \emph{Ocean plankton populations as excitable media}, Bulletin of Mathematical Biology \textbf{56} (1994), no.~5, 981--998.

\bibitem{Plankton-tipping}
Anna Vanselow, Lukas Halekotte, Pinaki Pal, Sebastian Wieczorek, and Ulrike Feudel, \emph{Rate-induced tipping can trigger plankton blooms}, Theoretical Ecology \textbf{17} (2024), 89--105.

\bibitem{vanselow_2019_when}
Anna Vanselow, Sebastian Wieczorek, and Ulrike Feudel, \emph{When very slow is too fast - collapse of a predator-prey system}, Journal of Theoretical Biology \textbf{479} (2019), 64--72.

\bibitem{2020SciPy-NMeth}
Pauli Virtanen, Ralf Gommers, Travis~E. Oliphant, Matt Haberland, Tyler Reddy, David Cournapeau, Evgeni Burovski, Pearu Peterson, Warren Weckesser, Jonathan Bright, St{\'e}fan~J. {van der Walt}, Matthew Brett, Joshua Wilson, K.~Jarrod Millman, Nikolay Mayorov, Andrew R.~J. Nelson, Eric Jones, Robert Kern, Eric Larson, C~J Carey, {\.I}lhan Polat, Yu~Feng, Eric~W. Moore, Jake {VanderPlas}, Denis Laxalde, Josef Perktold, Robert Cimrman, Ian Henriksen, E.~A. Quintero, Charles~R. Harris, Anne~M. Archibald, Ant{\^o}nio~H. Ribeiro, Fabian Pedregosa, Paul {van Mulbregt}, and {SciPy 1.0 Contributors}, \emph{{{SciPy} 1.0: Fundamental Algorithms for Scientific Computing in Python}}, Nature Methods \textbf{17} (2020), 261--272.

\bibitem{MR2136520}
Martin Wechselberger, \emph{Existence and bifurcation of canards in {$\mathbb{R}^3$} in the case of a folded node}, SIAM J. Appl. Dyn. Syst. \textbf{4} (2005), no.~1, 101--139. \MR{2136520}

\bibitem{Canards}
\bysame, \emph{{\`A{}} propos de canards ({A}propos canards)}, Trans. Amer. Math. Soc. \textbf{364} (2012), no.~6, 3289--3309. \MR{2888246}

\bibitem{wieczorek_2010_excitability}
Sebastian Wieczorek, Peter Ashwin, Catherine~M. Luke, and Peter~M. Cox, \emph{Excitability in ramped systems: the compost-bomb instability}, Proceedings of the Royal Society A: Mathematical, Physical and Engineering Sciences \textbf{467} (2011), 1243--1269.

\end{thebibliography}

\end{document}